\newcommand{\CC}{\mathbb{C}}
\newcommand{\PP}{\mathbb{P}}
\newcommand{\TT}{\mathbb{T}}
\newcommand{\Sym}{\mathbb{S}}
\newcommand{\MLD}{\mathrm{MLD}_{F}(X)}
\newcommand{\incidence}{\xi_{F}}
\newcommand{\vectorspace}{\mathcal{L}}
\newcommand{\secondvectorspace}{\mathcal{W}}
\newcommand{\bu}{u} 
\newcommand{\Co}{C_X^\circ}
\newcommand{\gencorr}{\mathfrak{N}}
\newcommand{\Con}{W(X)}
\numberwithin{equation}{section}
\theoremstyle{plain}
\newtheorem*{theorem*}{Theorem}
\newtheorem{theorem}{Theorem}
\numberwithin{theorem}{section}
\newtheorem{proposition}[theorem]{Proposition}
\newtheorem{lemma}[theorem]{Lemma}
\newtheorem{corollary}[theorem]{Corollary}
\theoremstyle{definition}
\newtheorem{definition}[theorem]{Definition}
\newtheorem{remark}[theorem]{Remark}
\newtheorem{example}[theorem]{Example}
\theoremstyle{definition}
\newtheorem*{defn*}{Definition}
\theoremstyle{plain}
\newtheorem*{thm*}{Theorem}
\theoremstyle{plain}
\newtheorem*{prop*}{Proposition}
\theoremstyle{plain}
\newtheorem*{conj*}{Conjecture}
\theoremstyle{plain}
\newtheorem*{ex*}{Example}
\definecolor{forest}{RGB}{11,128,35}
\definecolor{codedarkgreen}{RGB}{51, 133, 4}
\definecolor{codemaroon}{RGB}{133, 5, 63}
\definecolor{codeteal}{RGB}{0, 128, 96}
\lstdefinelanguage{Macaulay2}{
basicstyle=\small\ttfamily,
alsoletter=",
classoffset=1,
keywords={matrix,minors,gb,transpose,det,ideal,apply,subsets,ker,gens,fold,flatten,entries, numerator, denominator, diff, vars, frac,sub, map,random, sum,saturate, coefficient, multidegree, ring,min,max},
keywordstyle={\color{blue}},
classoffset=2,
morekeywords={from, to, list},
keywordstyle={\color{codemaroon}},
classoffset=3,
morekeywords={QQ, Degrees, ZZ},
keywordstyle={\color{codedarkgreen}},
classoffset=4,
morekeywords={MonomialOrder},
keywordstyle={\color{codeteal}},
xleftmargin=0.7cm,
xrightmargin=1em,
columns=fullflexible,
keepspaces=true,
stepnumber=1,
numbers=none,
captionpos=b,
showspaces=false,
frame=none
}
\title{The $F$-adjoined Gauss Map and Gaussian Likelihood Geometry}
\author{Lukas Gustafsson}
\begin{document}

\maketitle

\begin{abstract}
    We introduce the $F$-adjoined Gauss map. We use it to express the Gaussian maximum likelihood degree as a product of two invariants. As an application of our product formula, we classify all projective curves of Gaussian maximum likelihood degree 1. We also provide a formula for the \emph{generic} Gaussian maximum likelihood degree of a projective variety $X$ in terms of its polar classes.  The renowned polar class formula for generic Euclidean distance degree is a special case of our formula.
\end{abstract}

\section{Introduction}

In this paper we study algebraic statistical models of centered multivariate Gaussian distributions. These are subsets of the cone of $m\times m$ symmetric positive definite matrices given by polynomial equalities and inequalities. Each matrix $K$ in the model corresponds to the probability density function 
$$
f_K(x) = \sqrt{\frac{\det(K)}{(2\pi)^m}}e^{-\frac{x^\intercal K x}{2}}.
$$
The Gaussian maximum likelihood degree of a statistical model $\mathcal{M}$ counts the number of complex critical points of the log-likelihood function 
\begin{equation}
    \label{eqn:lS}
\ell_S(K) = \frac{n}{2} (\log \det (K) - \mathrm{trace} (SK) - m\log(2\pi))
\end{equation} 
when constrained to the complex Zariski closure $\overline{\mathcal{M}}$. Here $S=\frac{1}{n} \sum_{i=1}^n Y_i Y_i^\top$ is the \emph{sample covariance matrix} obtained from $n$ independent samples $Y_i$ of the underlying distribution and we assume throughout this article that the vanishing ideal of $\mathcal{M}$ is homogeneous. The concept of Gaussian maximum likelihood degree was introduced in \cite{SU10} for Gaussian statistical models. In \cite{ML1} a coordinate-free formulation of the problem is introduced to emphasize the underlying geometry which we now describe briefly. Given an arbitrary complex vector space $\vectorspace$, a homogeneous polynomial $F$, a general linear polynomial $\bu \in \vectorspace^\ast$ and a variety $X \subset \PP \vectorspace$ one can ask for the number of critical points of 
$$
\ell_{F, \bu}(x) = \log F(x) - \bu(x)
$$
when restricted to the smooth locus of the affine cone $C_X$ over $X$ away from the vanishing locus of $F$. The number of complex critical points is the \emph{maximum likelihood degree} of $X$ with respect to $F$ which we refer to as $\MLD$, in accordance with \cite{DRGS22}. We recover the classical Gaussian maximum likelihood degree by letting $\vectorspace$ be the set of $m \times m$ symmetric matrices $\mathbb{S}^m$, $F = \det$ and $\bu = \mathrm{trace}(S\bullet)$. In this paper we further extend the definition of maximum likelihood degree by allowing for a homogeneous rational function $F = f/g$, where $\deg(f) \neq \deg(g)$. We introduce more notation and definitions in Section \ref{sec: Notation}.

In Section \ref{sec: F-gauss}, Definition \ref{def: F-gaussmap} introduces the $F$-adjoined Gauss map of a projective variety $X$,  
\begin{displaymath}
    \gamma_{X, F} \colon X \dashrightarrow \mathrm{Gr}(\dim(X)-1, \PP\vectorspace).
\end{displaymath} 
The main goal of Section \ref{sec: F-gauss} is to use $\gamma_{X, F}$ and the order of a subvariety of the Grassmannian, introduced in Definition \ref{def: order}, to prove the following product formula for $\MLD$.

\begin{theorem}
    \label{thm: F-adjoineddegree}
Let $X$ be a projective variety and $F$ a degree $d \neq 0$ homogeneous rational function, then
    $$
\mathrm{MLD}_F(X) = \deg(\gamma^\vee_{X,F} ) \cdot \mathrm{order}(\overline{\mathrm{Im}(\gamma^\vee_{X,F} }))
    $$
    if $\gamma^\vee_{X,F}$ is generically finite, otherwise it is zero.  
\end{theorem}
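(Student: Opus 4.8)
The plan is to realise $\MLD$ as the degree of a dominant projection from an incidence variety attached to $\gamma^{\vee}_{X,F}$, and then to split that degree by routing the projection through the Grassmannian. Write $c=\mathrm{codim}_{\PP\vectorspace}(X)$ and $n=\dim\PP\vectorspace^{\ast}$.

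First I would set up the critical equations on $X$. A point $x$ in the smooth locus of $C_X$ with $F(x)\neq0$ is a critical point of $\ell_{F,\bu}$ exactly when $\tfrac1{F(x)}\mathrm{d}F_x-\bu$ annihilates $T_xC_X$, i.e.\ when $\bu\in A_x:=\tfrac1{F(x)}\mathrm{d}F_x+(T_xC_X)^{\perp}$. Since $d\neq0$, Euler's relation gives $\mathrm{d}F_x(x)=d\,F(x)\neq0$; hence $\mathrm{d}F_x\notin(T_xC_X)^{\perp}$, so $T_xC_X\not\subseteq\ker\mathrm{d}F_x$, the intersection $T_xC_X\cap\ker\mathrm{d}F_x$ has the expected dimension $\dim X$, and $\gamma_{X,F}$ — together with its annihilator $\gamma^{\vee}_{X,F}$ (Definition \ref{def: F-gaussmap}), for which $\gamma^{\vee}_{X,F}(x)=\PP(L_x)$ with $L_x:=\mathrm{span}\bigl(\mathrm{d}F_x,(T_xC_X)^{\perp}\bigr)$ — is defined at every such $x$. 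This is precisely the locus over which $\MLD$ counts critical points. Pairing the critical equation with the Euler vector $x\in T_xC_X$ forces $\bu(x)=d$, and on a fixed line $\mathbb{C}^{\ast}x$ the affine spaces $A_{\lambda x}$ are parallel translates sweeping out $L_x$, with a general point of $L_x$ lying on exactly one of them. Hence for general $\bu$ each incident line contributes exactly one critical point, and
\[
\MLD=\#\bigl\{\,[x]\in X:\ [\bu]\in\gamma^{\vee}_{X,F}(x)\,\bigr\}.
\]

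Next I would introduce the incidence variety $\mathcal{N}\subseteq X\times\PP\vectorspace^{\ast}$, the closure of $\{(x,\bu):\bu\in\gamma^{\vee}_{X,F}(x)\}$ over the domain of $\gamma^{\vee}_{X,F}$; over that dense open it is a $\PP^{c}$-bundle over an irreducible base, so $\mathcal{N}$ is irreducible of dimension $\dim X+c=n$. By the displayed formula, for general $\bu$ the fibre of $p_2\colon\mathcal{N}\to\PP\vectorspace^{\ast}$ over $\bu$ is the critical locus and has $\MLD$ points; thus $\MLD=\deg p_2$ if $p_2$ is dominant, and $\MLD=0$ otherwise. Now set $Y:=\overline{\mathrm{Im}(\gamma^{\vee}_{X,F})}\subseteq\mathrm{Gr}(c,\PP\vectorspace^{\ast})$, with universal incidence $\Psi_Y=\{([\bu],[\Lambda]):[\Lambda]\in Y,\ [\bu]\in\Lambda\}$ and projection $q\colon\Psi_Y\to\PP\vectorspace^{\ast}$; the rational map $\mathcal{N}\dashrightarrow\Psi_Y$, $(x,\bu)\mapsto(\bu,\gamma^{\vee}_{X,F}(x))$, is dominant and commutes with the projections to $\PP\vectorspace^{\ast}$. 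If $\gamma^{\vee}_{X,F}$ is not generically finite, then $\dim Y<\dim X$, so $\dim\Psi_Y=\dim Y+c<n$; hence $q$, and therefore $p_2$, is not dominant and $\MLD=0$. If $\gamma^{\vee}_{X,F}$ is generically finite, then $\dim Y=\dim X$, so $\dim Y+c=n$ and $\mathrm{order}(Y)$ is the number of members of $Y$ through a general point of $\PP\vectorspace^{\ast}$ (Definition \ref{def: order}); a dimension count on $\Psi_Y$ over the non-\'etale locus of $\gamma^{\vee}_{X,F}$ shows that for general $\bu$ those members are general points of $Y$. If $\mathrm{order}(Y)=0$ then $q$ and $p_2$ are not dominant and both sides vanish; if $\mathrm{order}(Y)>0$ then $p_2$ is dominant and, under the identification $p_2^{-1}(\bu)\cong\{[x]\in X:\bu\in\gamma^{\vee}_{X,F}(x)\}$, the fibre is the $\gamma^{\vee}_{X,F}$-preimage of the $\mathrm{order}(Y)$ general members of $Y$ through $\bu$, hence has $\deg(\gamma^{\vee}_{X,F})\cdot\mathrm{order}(Y)$ points. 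This gives the formula.

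The enumerative multiplicativity in the last step — a fibre product / degree-of-a-composition argument on generically finite maps — is routine once everything is in place. The part I expect to be delicate is the first paragraph: correctly passing between the affine cone $C_X$ and the projective variety $X$, where the hypothesis $d\neq0$ is used twice (to guarantee that $\gamma^{\vee}_{X,F}$ is defined on all of the locus carrying critical points, and to guarantee that each incident line contributes exactly one critical point rather than none or a pencil), and verifying that the indeterminacy locus of $\gamma^{\vee}_{X,F}$ carries no critical points for general $\bu$, so that the count is exact.
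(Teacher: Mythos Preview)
Your proposal is correct and follows essentially the same route as the paper. The paper separates out the passage from the affine cone to the projective picture as a standalone result (Proposition~\ref{prop: projectiveMLcorr}, supported by Lemmas~\ref{lem: Xf dim} and~\ref{lem: uxdegf}), establishing a birational map between the affine ML correspondence $\xi_F$ and the projective one $\mathfrak{X}_F$ and then arguing directly on $\mathfrak{X}_F$; you instead inline that passage in your first paragraph via the family of parallel affine translates $A_{\lambda x}$ and Euler's relation, which is exactly the same content packaged differently. Your factorisation through the universal incidence $\Psi_Y$ over $Y\subset\mathrm{Gr}(c,\PP\vectorspace^\ast)$ is a mild elaboration of the paper's two-line count (``$\mathrm{order}(Y)$ choices of $L$, then $\deg(\gamma^\vee_{X,F})$ choices of $x$''), and your dimension check that the members of $Y$ through a general $\bu$ lie in the \'etale locus of $\gamma^\vee_{X,F}$ makes explicit a genericity step the paper leaves implicit.
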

It is an important problem to classify the projective varieties $X$ and homogeneous polynomials $F$ admitting $\MLD=1$ because of their connection to the \emph{maximum likelihood estimator} (MLE) of the underlying statistical model $\mathcal{M}$. An MLE of a statistical model $\mathcal{M}$ and data $u$ is a point $x \in \mathrm{argmax}_{x \in \mathcal{M}} \ell_{F, \bu}$. This concept has been extensively studied in the literature, cf. \cite{AKRS21}. Of particular interest are models whose MLE is unique and a rational function in the data, $\mathrm{MLE} \colon \vectorspace^\ast \dashrightarrow \overline{\mathcal{M}}$. If $\overline{\mathcal{M}}$ has homogeneous vanishing ideal, corresponding to a projective variety $X$, the $\mathrm{MLE}$ is a rational function in the data if and only if $\MLD = 1$ \cite[Theorem~3.1]{ML1}. The varieties $X$ admitting $\MLD =1$ are in bijection with the solutions to the \emph{homaloidal PDE} \cite[Theorem~3.5]{ML1}
\begin{equation*}\label{pde}
\Phi = F\circ (-\nabla \log \Phi),
\quad
\Phi:\vectorspace^\ast \dashrightarrow \mathbb C \text{ rational and homogeneous.}
\end{equation*}
In Section \ref{sec: classifycurves} we classify the projective curves and homogeneous polynomials $F$ such that $\MLD = 1$ using Theorem \ref{thm: F-adjoineddegree}. These curves are also referred to as curves with \emph{rational MLE}.

\begin{theorem}
\label{thm: finalcurve}
Let $X\subset \PP\vectorspace$ be a curve of degree $>1$ with vanishing ideal $\mathcal{I}(X)$ and let $X^\vee$ denote its dual variety. For any homogeneous polynomial $F$ of positive degree we have
\begin{align*}
    \MLD=1 \iff \begin{cases}
        \text{The linear span of $X$ is two-dimensional and } \\
        \exists \alpha \in \vectorspace^\ast \text{ such that: } \\
        \quad \mathrm{mult}_{[\alpha]}(X^\vee) = \deg(X^\vee) - 1, \\
        \quad F - \alpha^{\deg(F)} \in \mathcal{I}(X).
    \end{cases}
\end{align*}
Moreover, $\MLD =1$ implies that $X^\vee$ is a hypersurface $X^\vee=V(g) \subset \PP \vectorspace^\ast$ and the solution to the homaloidal PDE with respect to $F$ is determined by $\alpha$ and $g$:
\[
\Phi_{X, \vectorspace, F} = \deg(F)^{\deg(F)}\cdot \Big( \alpha(\nabla \log g) \Big)^{\mathrm{deg}(F)}.
\]
\end{theorem}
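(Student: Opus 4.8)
The plan is to run everything through the product formula of Theorem~\ref{thm: F-adjoineddegree}. Since $\MLD$ is the product of the two nonnegative integers $\deg(\gamma^\vee_{X,F})$ and $\mathrm{order}(\overline{\mathrm{Im}(\gamma^\vee_{X,F})})$, the equality $\MLD = 1$ holds exactly when $\gamma^\vee_{X,F}$ is generically finite, $\deg(\gamma^\vee_{X,F}) = 1$, and $\mathrm{order}(\overline{\mathrm{Im}(\gamma^\vee_{X,F})}) = 1$. The first step is therefore to make Definition~\ref{def: F-gaussmap} and Definition~\ref{def: order} completely explicit for a curve: here $\dim X = 1$, so $\gamma_{X,F}$ takes values in $\mathrm{Gr}(0,\PP\vectorspace) = \PP\vectorspace$, and I would describe $\gamma_{X,F}$ and its dual $\gamma^\vee_{X,F}$ pointwise on the smooth locus of $X$ away from the zero locus of $F$ in terms of the affine tangent line of $X$ and the logarithmic gradient $\nabla \log F$. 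This turns $\overline{\mathrm{Im}(\gamma^\vee_{X,F})}$ into an explicit curve (or point) in a Grassmannian of linear subspaces of $\PP\vectorspace^\ast$, whose order and whose map‑degree from $X$ can then be read off.

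For the forward implication, assume $\MLD = 1$. The main obstacle is to extract the three clauses on the right‑hand side from the two equalities $\deg(\gamma^\vee_{X,F}) = 1$ and $\mathrm{order}(\overline{\mathrm{Im}(\gamma^\vee_{X,F})}) = 1$; this is the technical heart of the section. The intended mechanism is that a family of linear subspaces of order $1$ is extremely degenerate — a general point of the span of the family lies on a unique member — and the factorization of Theorem~\ref{thm: F-adjoineddegree} is shown to force $X$ to lie in a plane; combined with $\deg(X) > 1$ this makes $X^\vee$ a genuine hypersurface $V(g) \subset \PP\vectorspace^\ast$. Once $X$ is planar, $\overline{\mathrm{Im}(\gamma^\vee_{X,F})}$ is identified with a subvariety cut out on $X^\vee$ by the logarithmic gradient data of $F$; the remaining content of $\mathrm{order} = 1$ then becomes the statement that $X^\vee$ carries a point $[\alpha]$ with $\mathrm{mult}_{[\alpha]}(X^\vee) = \deg(X^\vee) - 1$ (equivalently, that projection from $[\alpha]$ is a birational map $X^\vee \dashrightarrow \PP^1$), while $\deg(\gamma^\vee_{X,F}) = 1$ becomes the congruence $F - \alpha^{\deg F} \in \mathcal{I}(X)$, since this degree measures precisely the failure of $\nabla \log F$ along $X$ to be governed by the single linear form $\alpha$. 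This also records the claimed fact that $X^\vee$ is a hypersurface.

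For the reverse implication together with the closed form of $\Phi$, the plan is to exhibit the solution of the homaloidal PDE directly and then invoke \cite[Theorem~3.5]{ML1}. Assume $X$ spans a plane, $X^\vee = V(g)$ with $\mathrm{mult}_{[\alpha]}(X^\vee) = \deg(g) - 1$, and $F - \alpha^{\deg F} \in \mathcal{I}(X)$. After a linear change of coordinates we may take $[\alpha] = [1:0:\cdots:0]$ and arrange that $g$ involves only the three coordinates $u_1,u_2,u_3$ transverse to the vertex of the cone $X^\vee$; the multiplicity hypothesis then forces $g = u_1\, p(u_2,u_3) + q(u_2,u_3)$ with $\deg p = \deg g - 1$ and $p\neq 0$, so that $\alpha(\nabla g) = \partial_{u_1} g = p$. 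Writing $r = q/p$ and $k = \deg F$, a short computation in these coordinates gives
\[
-\nabla\log\!\Big(k^{k}\big(\alpha(\nabla\log g)\big)^{k}\Big) \;=\; \frac{k}{u_1 + r}\,\big(1,\ \partial_{u_2} r,\ \partial_{u_3} r,\ 0,\dots,0\big),
\]
and the projectivization of the right‑hand side equals $[\nabla g]$ evaluated at the residual intersection point $[-r:u_2:u_3]$ of $X^\vee$ with the line joining $[\alpha]$ to $[0:u_2:u_3]$; by biduality this point lies on $(X^\vee)^\vee = X$. Hence $\Phi := k^{k}\big(\alpha(\nabla\log g)\big)^{k}$ satisfies $-\nabla\log\Phi \in C_X$, and reading off the first coordinate above together with $F \equiv \alpha^{\deg F}$ on $X$ yields $F\big(-\nabla\log\Phi\big) = \big(k/(u_1+r)\big)^{k} = \Phi$. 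Thus $\Phi$ solves $\Phi = F\circ(-\nabla\log\Phi)$, so by \cite[Theorem~3.5]{ML1} we conclude $\MLD = 1$ and $\Phi_{X,\vectorspace,F} = \Phi$. The only remaining items are routine: checking that the coordinate normalization loses no generality, that $\Phi$ is homogeneous of the correct degree, and that the maps in play — $\nabla g$ restricted to $X^\vee$, the projection from $[\alpha]$, and $u\mapsto -\nabla\log\Phi(u)$ — are dominant onto $X$ where needed.
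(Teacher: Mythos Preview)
Your reverse direction and the explicit verification of $\Phi$ are correct and more hands-on than the paper's route, which instead reduces to the linear case via Remark~\ref{rem: euler-char} and then invokes Theorem~\ref{thm: linsolution}.

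The forward direction, however, has the two factors of Theorem~\ref{thm: F-adjoineddegree} swapped, and this is a genuine gap rather than a labeling issue. Once $\mathrm{order}=1$ forces $X$ into a plane (Lemma~\ref{lem: ML1bounddimension}) and identifies the image of $\gamma_{X,F}$ as a line $[\alpha]^\vee$, one has $\gamma^\vee_{X,F}(p)=(\TT_pX)^\vee+[\alpha]$ for every smooth $p$, so that $\gamma^\vee_{X,F}=\gamma^\vee_{X,\alpha}$ \emph{independently of $F$}. Its degree is then that of the composite of the (birational) Gauss map $X\dashrightarrow X^\vee$ with projection from $[\alpha]$, namely $\deg(X^\vee)-\mathrm{mult}_{[\alpha]}(X^\vee)$; hence it is $\deg(\gamma^\vee_{X,F})=1$ that yields the multiplicity condition, not $\mathrm{order}=1$. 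Your claim that ``this degree measures precisely the failure of $\nabla\log F$ along $X$ to be governed by $\alpha$'' is therefore incorrect: once $\mathrm{order}=1$ is known, the degree no longer sees $F$ at all. The congruence $F\equiv\alpha^{\deg F}$ on $X$ is instead already a consequence of $\mathrm{order}=1$: unwinding $\gamma_{X,F}(p)\in[\alpha]^\vee$ gives $\alpha(p)\,d_pF=(\deg F)\,F(p)\,d_p\alpha$ on $T_pC_X$, i.e.\ $d\log\!\big(F/\alpha^{\deg F}\big)=0$ along $C_X$, so the ratio is constant on $X$. The paper does not isolate this differential argument either; it extracts the congruence through Proposition~\ref{prop: subdualsML1}, comparing the two rational MLE maps via the homaloidal PDE --- and that is the step for which your sketch supplies no actual mechanism.
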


A direct consequence of this theorem is that curves $X \subset \PP\vectorspace \cong \PP^2$ with $\MLD =1$ are have bounded degree, $\deg(X) \leq \deg(F)$, as long as the radical of $F$ is not linear. This provides a classification of all varieties $X \subset \PP\Sym^2$ with $\mathrm{MLD}_{\det}(X) = 1$ by combining known results for linear subvarieties \cite[Corollary~4.12]{ML1} and Corollary \ref{cor: allS2ML1}. We also extend the study of curves with rational MLE to surfaces in Section \ref{sec: surfaceML1study} and identify a class of surfaces in $\PP^3$ with maximum likelihood degree $1$. 

Note that the Gaussian maximum likelihood degree of $X$ for the Fermat quadric $F = \sum_i x_i^2$ is the same as the Euclidean distance degree of the affine cone over $X$ \cite[Proposition~2.7]{ML1}. In Section \ref{sec: genericMLD} this connection and the ideas from earlier sections culminate in a new formula for the \emph{generic} value of $\MLD$, extending the renowned formula for the generic \emph{Euclidean distance degree} as the sum of the polar classes, $\mathrm{EDD}(X) = \sum_i \delta_i(X)$ \cite[Theorem~5.4]{DHOST16}. The formula also shares many similarities with known results for other optimization problems, cf. \cite[Theorem~1]{SS16} and \cite[Theorem~5.4]{KKS21}. 

\begin{theorem}
\label{thm: finalpolarformula}
Let $F$ be a homogeneous rational function on $\vectorspace$ of nonzero degree. Let $X \subset \PP\vectorspace$, then
$$
\mathrm{MLD}_F(X) \leq \sum_{i=0}^{\dim{\PP \vectorspace}-1} \delta_i(X) \mu_{i}(F)
$$
where $\delta_i(X)$ is the $i$'th multidegree of the conormal variety of $X$ and $\mu_i(F)$ is the maximum likelihood degree with respect to $F$ of a general $i$-dimensional projective linear subspace. Equality holds if $X$ is sufficiently general.
\end{theorem}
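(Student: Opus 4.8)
The plan is to realise $\MLD$ as the degree of a projection from an auxiliary ``$F$-twisted conormal variety'', to expand that degree along the polar filtration of the ordinary conormal variety of $X$, to recognise the resulting local contributions as the numbers $\mu_i(F)$, and to upgrade the inequality to an equality by Kleiman transversality after a general change of coordinates. The special case $F=\sum_i x_i^2$ should then collapse to the polar formula for the Euclidean distance degree.

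First I would package the critical equations geometrically. A point $x\in\Co\setminus V(F)$ is a critical point of $\ell_{F,\bu}$ exactly when the covector $\nabla F(x)-F(x)\,\bu$ annihilates $T_x\Co$, equivalently when $\bu$ lies on the affine subspace $\mathrm{span}(\nabla F(x))+(T_x\Co)^\perp\subseteq\vectorspace^\ast$. Taking closures and projectivising $\bu$, this describes the incidence variety $\mathcal{N}_{X,F}\subseteq\PP\vectorspace\times\PP\vectorspace^\ast$ whose fibre over a general $[x]\in X$ is the linear join, inside $\PP\vectorspace^\ast$, of the point $[\nabla F(x)]$ (the value of the polar map $\psi_F\colon\PP\vectorspace\dashrightarrow\PP\vectorspace^\ast$) with the projectivised conormal space $\PP(T_x\Co)^\perp$, the latter being the fibre over $[x]$ of the conormal variety $N_X$. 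In other words $\mathcal{N}_{X,F}$ is the incidence variety of the family of linear subspaces $\gamma^\vee_{X,F}([x])$ of Section~\ref{sec: F-gauss}, and in particular $\dim\mathcal{N}_{X,F}=\dim\PP\vectorspace^\ast$. The Euler relation $x\cdot\nabla F(x)=(\deg F)F(x)$ forces a critical point to satisfy $\bu(x)=\deg F\neq 0$, which pins down its scaling, and for general $\bu$ no point of the boundary of $\mathcal{N}_{X,F}$ lies over $[\bu]$; hence, by the same analysis that underlies Theorem~\ref{thm: F-adjoineddegree}, $\MLD=\deg\!\big(\mathrm{pr}_{\PP\vectorspace^\ast}\colon\mathcal{N}_{X,F}\to\PP\vectorspace^\ast\big)$, which is $0$ exactly when this projection is not dominant.

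Next I would compute that degree, i.e.\ count the $[x]\in X$ with $[\bu]\in\PP\big(\mathrm{span}(\nabla F(x))+(T_x\Co)^\perp\big)$ for general $\bu$. This membership is equivalent to the restrictions $\nabla F(x)|_{T_x\Co}$ and $\bu|_{T_x\Co}$ being proportional in $(T_x\Co)^\ast$, so the critical locus is the rank-$\le 1$ degeneracy locus of the $2\times\dim\Co$ matrix of sections with rows $\bu|_{T_x\Co}$ and $\nabla F(x)|_{T_x\Co}$. This is best carried out on $N_X$ (or its Nash-type resolution), where the tautological flag makes these data regular and where, by definition, the multidegrees of $N_X$ are the polar classes $\delta_i(X)$. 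I would stratify by the Schubert-type condition measuring how $\bu|_{T_x\Co}$ degenerates relative to $T_x\Co$: the stratum recording the $i$-th such condition is a cycle of degree $\delta_i(X)$, by the very definition of the $i$-th multidegree of $N_X$, and along it the residual condition on $\nabla F(x)$ remembers $T_x\Co$ only to first order, so it localises to the count of $F$-critical points on a general linear subspace of dimension $i$, namely $\mu_i(F)$. Summing over $i$ produces the ``expected'' value $\sum_i\delta_i(X)\mu_i(F)$ of $\deg(\mathrm{pr}_{\PP\vectorspace^\ast})$.

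Finally, the expected value is only an upper bound for $\MLD$ in general: for special $X$ some of these strata fail to be dimensionally proper --- because $X$ meets $V(F)$, $\mathrm{Sing}(X)$, or the indeterminacy locus of $\psi_F$ in excess, or because a general point of $X$ is already a critical point of $F|_{\Co}$ --- and each such deficiency only lowers the honest count. Replacing $X$ by a general element of its $\mathrm{GL}(\vectorspace)$-orbit leaves every $\delta_i(X)$ and every $\mu_i(F)$ unchanged, while Kleiman's transversality theorem moves $X$ off all the bad loci and makes every stratum proper and generically reduced, forcing equality; this is the content of ``sufficiently general $X$''. The main obstacle is the middle step: rigorously identifying the local contribution at the $i$-th polar stratum with $\mu_i(F)$ --- that is, showing the twisted-conormal construction restricted to that stratum is \'etale-locally the twisted conormal variety of a general $\PP^i$, and that the strata contribute additively --- which is the natural extension, from the isotropic quadric to an arbitrary $F$, of the conormal-variety argument behind \cite[Theorem~5.4]{DHOST16}. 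As a consistency check, for the Fermat quadric $F=\sum_i x_i^2$ the invariant $\mathrm{MLD}_F$ of a projective variety is the Euclidean distance degree of its affine cone by \cite[Proposition~2.7]{ML1}, the map $\psi_F$ is a linear isomorphism, and a general linear subspace is nondegenerate for the associated form, so $\mu_i(F)=\mathrm{MLD}_F(\text{general }\PP^i)=\mathrm{EDD}(\text{general linear subspace})=1$ for all $i$ and the formula collapses to $\mathrm{EDD}(X)=\sum_i\delta_i(X)$, recovering \cite[Theorem~5.4]{DHOST16}.
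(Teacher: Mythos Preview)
Your outline is sound at both ends: the incidence variety $\mathcal{N}_{X,F}$ you build is the paper's projective ML correspondence $\mathfrak{X}_F$, the Euler-relation argument pinning down the scaling is Lemma~\ref{lem: uxdegf}, and invoking Kleiman transversality on a $\mathrm{PGL}(\vectorspace)$-translate for the equality case is exactly Proposition~\ref{prop: pert-F-general}. The gap is your middle step. You propose to stratify the conormal variety by Schubert conditions on $\bu|_{T_x\Co}$ and claim the residual contribution on the $i$-th stratum ``localises'' to $\mu_i(F)$; you yourself flag this as ``the main obstacle'', and rightly so. There is no reason the tangent space at a point of the $i$-th polar stratum should behave like a \emph{general} linear $\PP^i$ with respect to $F$ before a generic translate is applied, the strata are not shown to contribute additively, and no control over excess along their closures is offered. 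As written, this step is a heuristic, not a proof.

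The paper bypasses the local analysis entirely by enlarging the ambient space. It works in $\PP\vectorspace\times(\PP\vectorspace^\ast)^3$ with coordinates $(x,y,z,u)$ and sets
\[
\gencorr_{X,F}=\{(x,y,z,u):\ (x,y)\in\Con,\ (x,z)\in\overline{\Gamma}_{\nabla F},\ y\wedge z\wedge u=0\}.
\]
The projection to $u$ factors birationally through $\mathfrak{X}_F$, so $\MLD=[\gencorr_{X,F}]\cdot H_u^n$. But now $\gencorr_{X,F}$ is visibly the intersection of three pulled-back cycles with known Chow classes: the multidegrees of $\Con$ are the $\delta_i(X)$, those of $\overline{\Gamma}_{\nabla F}$ are the $\mu_i(F)$ (this is how the paper \emph{defines} $\mu_i(F)$; the interpretation as $\mathrm{MLD}_F$ of a general $\PP^i$ comes afterwards, in Example~\ref{ex: linearspacesexpected}), and those of the collinearity locus $Z$ times $H_u^n$ are all $1$. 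Under the $F$-generality hypothesis the intersection is shown to be generically transverse, so multiplying the three classes in the Chow ring of the quadruple product yields $\sum_i\delta_i(X)\mu_i(F)$ directly; when $X$ is not $F$-general the relevant component of $\gencorr_{X,F}$ is dominated by this product, giving the inequality. The appearance of $\mu_i(F)$ is thus automatic from the class of $\overline{\Gamma}_{\nabla F}$, with no \'etale-local identification required.
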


The exact definition for genericity in Theorem \ref{thm: finalpolarformula} is given in Definition \ref{def: F-general} and motivated by Proposition \ref{prop: pert-F-general}. Theorem \ref{thm: finalpolarformula} is intended to be used in the context of algebraic statistics and Gaussian maximum likelihood estimation, where $\vectorspace = \PP\mathbb{S}^m$ and $F = \det$. The maximum likelihood degree of a general $i$-dimensional projective linear subspace of $\mathbb{S}^m$, $\mu_i(\det)$, has been studied in the literature and is referred to as $\phi(m,i)$ \cite{MMMSV20}.

Theorem \ref{thm: finalpolarformula} brings up a future problem of better understanding the restriction of the determinant to the linear space $\vectorspace \subset \mathbb{S}^m$ associated with a graphical model which can not be considered general. Understanding the restriction of the determinant to such a special linear space would yield a better understanding of the maximum likelihood degrees of the submodels of the graphical model. Moreover, the linear space of diagonal matrices (corresponding to the graphical model with no edges) and the maximum likelihood degrees of its linear subspaces have connections to matroid theory via the \emph{permutohedral variety} \cite{DMS21,HK12} which would be interesting to explore in more generality. 

\subsection*{Acknowledgments}
The author was supported by the VR grant [NT:2018-03688]. The author would like to thank (in no particular order) Kathlén Kohn, Sandra Di Rocco, Luca Sodomaco, Luca Schaffler and Orlando Marligliano for helpful discussions and feedback.

\section{Notation}
\label{sec: Notation}
Throughout this article, $\vectorspace$ and $\secondvectorspace$ denote finite-dimensional $\CC$-vector spaces. Let $F$ be a homogeneous rational function of nonzero degree, i.e., a quotient of homogeneous polynomials of different degrees. The letter $X$ refers to an arbitrary closed irreducible projective variety $X \subset \PP(\vectorspace)$ which is not contained in the divisor associated with $F$ in $\PP \vectorspace$. Let $U(F) \subset X$ denote the open set that is the complement of the associated divisor of $F$. The projective general linear group of $\vectorspace$ is $\mathrm{PGL}(\vectorspace)$. We denote the associated affine cone over $X \subset \PP\vectorspace$ by $C_X \subset \vectorspace$. With this notation we may define 
$$
\Co := {(C_{X})_{\mathrm{reg}} \setminus \mathrm{div}(F)}
$$
where $\mathrm{reg}$ denotes the regular/smooth locus of a variety and $\mathrm{div}(F)$ the divisor associated with a rational function $F$. 

The reader is assumed to be familiar with the notion of dual variety $X^\vee$ and bi-duality for complex projective varieties covered in \cite[Chapter~1]{GKZ94}. We will denote the \emph{conormal} variety of $X$ by
$$
\Con \coloneqq \overline{\{ (x,H) \mid \, x \in X_{\mathrm{reg}} \, \text{ and } \, H \text{ is tangent to $X$ at $x$ } \}} \subset \PP \vectorspace \times \PP \vectorspace^\ast
$$
where $\vectorspace^\ast$ denotes the dual vector space whose elements are linear forms and the rational equivalence class of $W(X)$ is 
$$
[\Con] = \sum_{i=0}^{\dim \PP \vectorspace-1} \delta_i(X)H^{\dim \PP \vectorspace-i}(H^\vee)^{i+1} \in A^\ast(\PP \vectorspace \times \PP \vectorspace^\ast).
$$ 
We denote the span of two linear spaces $A, B \subset \PP \vectorspace$ by $A+B$ and for a vector $v \in \vectorspace$ we use $[v] \in \PP \vectorspace$ to denote the projective equivalence class. We refer to the gradient of a homogeneous rational function $F$ as $\nabla F \colon \vectorspace \dashrightarrow \vectorspace^\ast$ and do not distinguish it from the map
\begin{align*}
    \nabla F \colon & \PP\vectorspace \dashrightarrow \PP \vectorspace^\ast \\
    & [v] \mapsto [\nabla_v F].
\end{align*}
For $p \in \PP \vectorspace$ the hyperplane that corresponds to $\nabla_p F$ is denoted by $\nabla_p F^\vee$. For a rational map $\varphi \colon X \dashrightarrow Y$ we denote its graph by 
$$
\Gamma_\varphi = \{(x,y) \colon \varphi(x) = y \} \subset X \times Y
$$ 
and its closure by $\overline{\Gamma}_{\varphi}$. We denote the multi degrees of the gradient map of $F$ by $\mu_i(F)$, i.e., 
$$
[\overline{\Gamma}_{\nabla F}] = \sum_{j=0}^{\dim \PP\vectorspace} \mu_j(F) H^j (H^\vee)^{\dim \PP\vectorspace-j} \in A^\ast(\PP \vectorspace \times \PP \vectorspace^\ast)
$$
We denote the Grassmannian of $k$-dimensional subspaces of $\PP \vectorspace$ by 
$\mathrm{Gr}(k, \PP \vectorspace)$ and $\TT_xX$ denotes the embedded projective tangent space of $X$ at $x$. The \emph{Gauss map} is defined as
\begin{align*}
    \gamma_X \colon  X & \dashrightarrow \mathrm{Gr}(\dim(X), \PP \vectorspace) \\
  x & \mapsto \TT_xX.
\end{align*}
We identify the space $\Sym^m$ of symmetric $m \times m$ matrices with its dual space $\Sym^{m\ast}$ via the trace pairing,
$$
S \in \mathbb{S}^m \sim \Big( K \mapsto \mathrm{trace}(SK) \Big) \in \Sym^{m\ast}.
$$
We refer to elements of $\Sym^{m\ast}$ by $S$ and they correspond to matrices with entries $\{s_{ij}\}_{1\leq i\leq j \leq m}$. Elements in $\mathbb{S}^m$ are referred to as $K$ with entries $\{\kappa_{ij}\}_{1\leq i\leq j \leq m}$. With this identification the gradient of a function $F \colon \Sym^{m} \dashrightarrow \CC$ is given by
$$
(\nabla_K F)_{ij} = \frac{1}{2-\delta_{ij}} \frac{\partial F}{\partial \kappa_{ij}}
$$
where $\delta_{ij}$ is the Kronecker delta. For example on $\Sym^2$ we may consider
\begin{align*}
    F(K) = F(\begin{smallmatrix} \kappa_{11} & \kappa_{12} \\ \kappa_{12} & \kappa_{22} \end{smallmatrix} ) = \kappa_{11}\kappa_{22}-\kappa_{12}^2 - \kappa_{11}^2
\end{align*}
and its gradient is
$$
\nabla_K F = (\begin{smallmatrix} \kappa_{22}-2\kappa_{11} &  -\kappa_{12} \\ -\kappa_{12} & \kappa_{11} \end{smallmatrix} ).
$$

\section{ A product formula for Gaussian maximum likelihood degree }
\label{sec: F-gauss}

The aim of this section is to introduce the $F$-adjoined Gauss map and give a self-contained proof of the Theorem \ref{thm: F-adjoineddegree} that is needed in Section \ref{sec: classifycurves} to classify the curves with rational MLE. The theory of this section is built from ideas presented in \cite[Section~2]{DRGS22}. 

\begin{definition}
\label{def: F-gaussmap}
    The \textit{F-adjoined Gauss map} associated with a variety $X \subset \PP \vectorspace$ and homogeneous rational function $F \colon \vectorspace \dashrightarrow \CC$ of degree $d \neq 0$ is the map
    \begin{align*}
        \gamma_{X,F}\colon  X & \dashrightarrow \mathrm{Gr}(\mathrm{dim}(X) -1, \PP \vectorspace) \\
         p & \mapsto (\TT_p X) \cap  (\nabla_p F^\vee).
    \end{align*}
    Dually we can define the map
    \begin{align*}
        \gamma^\vee_{X,F} \colon  X & \dashrightarrow \mathrm{Gr}(\mathrm{codim}_{\PP \vectorspace}(X), \PP \vectorspace^\ast) \\
         p &\mapsto (\TT_p X)^\vee + \nabla_p F.
    \end{align*}
    We also define the \textit{projective maximum likelihood correspondence} as
$$
\mathfrak{X}_{F} = \{ (x, \bu) \colon \exists L \text{ s.t. } (x,L) \in \overline{\Gamma}_{\gamma^\vee_{X,F} } \, \text{ and } \, [\bu] \in L \} \subset X \times \vectorspace^\ast.
$$
\end{definition}

\begin{figure}[H]
    \centering
    \begin{subfigure}{.45\textwidth}
        \centering
    \includegraphics[width=.9\textwidth]{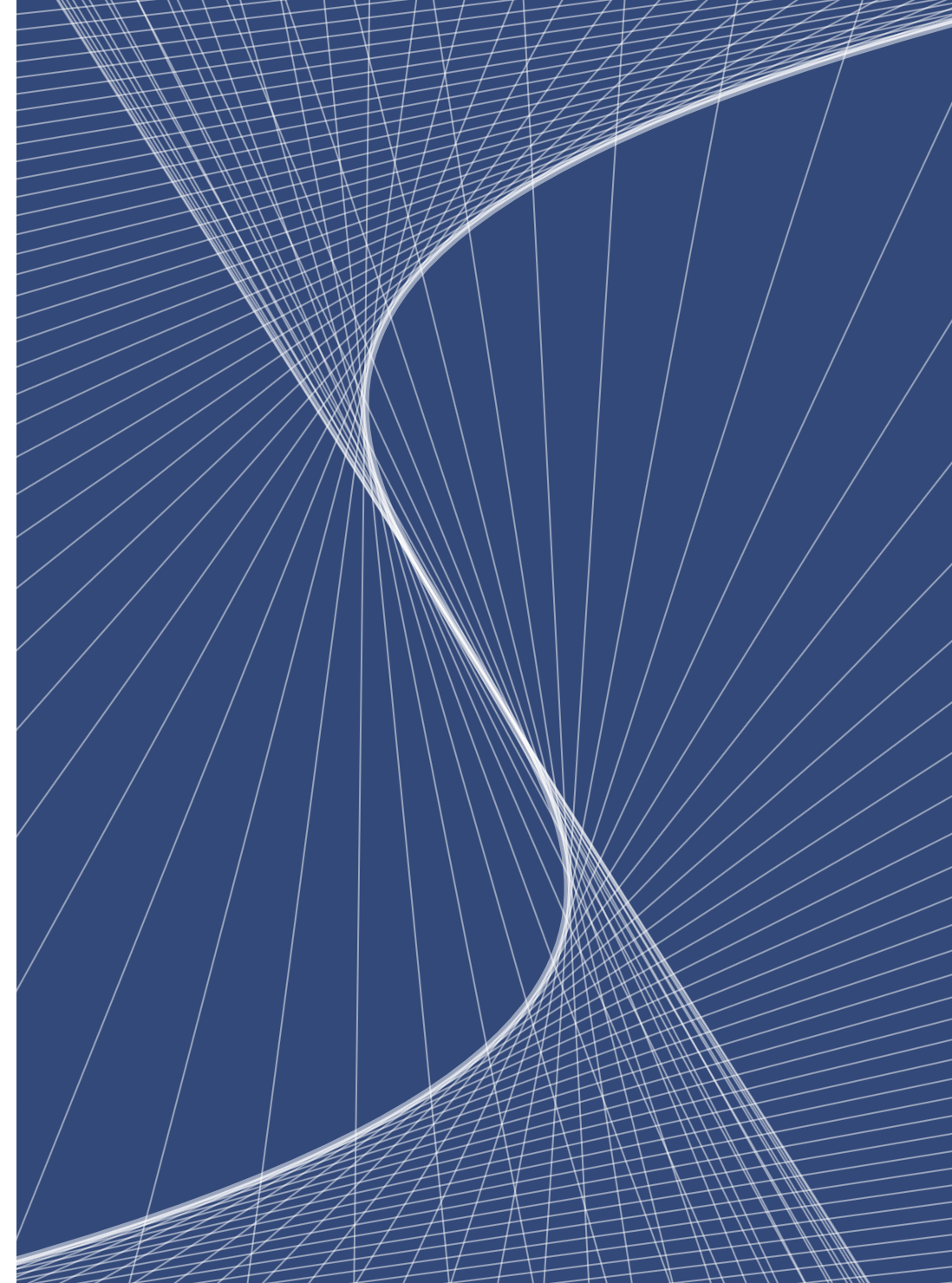}
        \caption{Illustration of $\gamma_X$ for a curve.}
        \label{fig:Gauss}
    \end{subfigure}
    \begin{subfigure}{.45\textwidth}
        \centering
    \includegraphics[width=.885\textwidth]{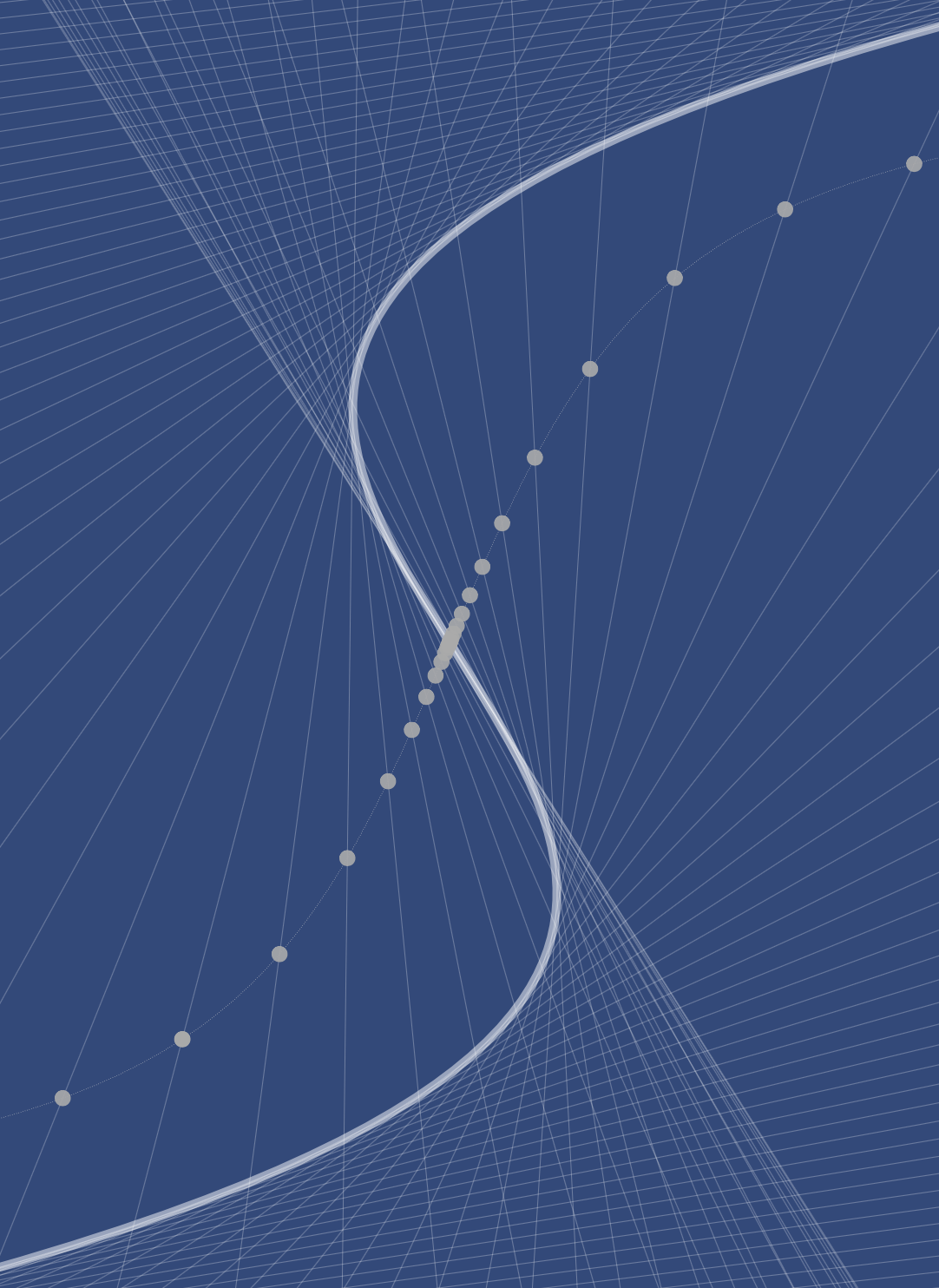}
    \caption{ Illustration of $\gamma_{X,F}$ for a curve.}
    \label{fig:F-Guass}
    \end{subfigure}
    \caption{Figure (A) illustrates a collection of tangent lines to a curve, i.e., some selected values of $\gamma_X$. Figure (B) illustrates a collection of points, each lying on a unique tangent line, that correspond to some selected values of $\gamma_{X,F}$. Images created with the Desmos Graphing Calculator, used with permission from Desmos Studio PBC \cite{desmos}.}
    \label{fig:Gaussmaps}
\end{figure}

\begin{example}
    Consider the curve $X = V(g)\subset \PP(\mathbb{C}^3)$ for $g=x_0x_1 -x_2^2-x_0^2 $ and let $F = x_0x_1-x_2^2$. Identify $\mathbb{C}^3$ with its dual vector space via the bilinear form $\langle x, y \rangle = x_0y_0+x_1y_1+x_2y_2$. Then
    $$
    \nabla_x F = (
    x_1, x_0, -2x_2) \qquad \nabla_x g = (
    x_1-2x_0, x_0, -2x_2)
$$ and 
    $$
    \TT_{x}X = \gamma_X(x) = (\nabla_x g)^\vee = \{ a\in \PP^2 \mid (x_1 - 2x_0)a_0+ x_0a_1 -2x_2a_2 = 0 \}
    $$
    This means that the image of $x$ under the $F$-adjoined Gauss map is the linear space of dimension $0 = \dim(X) -1$ obtained by intersecting $\TT_xX$ with $(\nabla_x F)^\vee$:
\begin{align*}
       &\gamma_{X,F}(x) = \TT_{x}X \cap (\nabla_x F)^\vee   \\
       =&\{ a\in \PP^2 \mid \begin{tiny}(x_1 - 2x_0)a_0+ x_0a_1 -2x_2a_2 =  x_1a_0+ x_0a_1 -2x_2a_2 = 0 \} \end{tiny} \\
          = & \{ (0:2x_2:x_0) \}. 
\end{align*}
\end{example}

\begin{definition}
Fix a homogeneous rational function $F$ of degree $d \neq 0$ on $\vectorspace$. For $\bu\in\vectorspace^\ast$ we define the \emph{log-likelihood with respect to $F$}
\[
\ell_{F,\bu}(p) \coloneqq \log(F(p)) - \bu(p).
\]
\end{definition}

\begin{remark}
    Technically the codomain of $\ell_{F, \bu}$ is $\CC/(2 \pi i \mathbb{Z})$ but for the purpose of taking derivatives this is not a problem as we can identify its differential with a map 
\begin{align*}
        d(\ell_{F, \bu})\colon & \vectorspace \dashrightarrow \vectorspace^\ast \\ 
        & p \mapsto \frac{d_p F}{F(p)} - \bu
\end{align*}
\end{remark}

\begin{definition}
Let $X\subseteq \PP(\vectorspace)$ be a projective variety. We define the \textit{affine maximum likelihood correspondence} with respect to $F$ as
\begin{align*}
    \incidence =
    \{ (x, u) \colon x \text{ is a smooth critical point of } \ell_{F, \bu}|_{\Co} \} \subset \vectorspace \times \vectorspace^\ast. 
\end{align*}
The affine maximum likelihood correspondence comes equipped with two projections denoted by $\pi_\vectorspace$ and $\pi_{\vectorspace^\ast}$ respectively. 
\end{definition}

\begin{remark}
Fix a basis for $\vectorspace$ (and its dual basis on $\vectorspace^\ast$), generators $h_i$ for $\mathcal{I}(X)$ and letting $F$ be the quotient of two relatively prime homogeneous polynomials $F = f/g$. We may use the coordinate functions $x_i$ on $\vectorspace$ and $u_i$ on $\vectorspace^\ast$ to describe the ideals $\mathcal{I}(\mathfrak{X}_F), \mathcal{I}(\overline{\incidence}) \subset \CC[x,u]$ respectively
    \begin{align*}
       \mathcal{I}(\mathfrak{X}_F) = \left[\mathcal{I}(X) + \left \langle (c+2) \text{ minors }\begin{bmatrix}
      u \\
          g\nabla_{x} f - f\nabla_{x}g  \\
          J_X(x)
      \end{bmatrix} \right \rangle \right] \colon \left[ \sqrt{ \mathcal{I}(\gamma_{X,F}) }\right]^\infty 
    \end{align*}
    \begin{align*}
      \mathcal{I}(\overline{\incidence}) = \left[\mathcal{I}(X) + \left \langle (c+1) \text{ minors }\begin{bmatrix}
          fg(\frac{\nabla_{x} F}{F(x)} - u) \\
          J_X(x)
      \end{bmatrix} \right \rangle \right] \colon  \left[ \sqrt{ \mathcal{I}(X_{\mathrm{sing}}) \cdot \mathcal{I}(fg) }\right]^\infty
    \end{align*}
    where $c := \mathrm{codim}_{\PP \vectorspace}(X)$, the rows of $J_X$ are the gradients of  $h_i$ and 
    \begin{align*}
          \mathcal{I}(\gamma_{X,F}) =& \left \langle (c+1) \text{ minors }\begin{bmatrix}
          g\nabla_{x} f - f\nabla_{x}g  \\
          J_X(x)
      \end{bmatrix} \right \rangle \\
      \mathcal{I}(X_{\mathrm{sing}}) = &  \left \langle c \text{ minors } J_X(x) \right \rangle +  \mathcal{I}(X).
    \end{align*}
\end{remark}

\begin{lemma}
\label{lem: Xf dim}
    The projection $\pi_\vectorspace\colon \incidence \to \Co$ is a vector bundle of rank $\mathrm{codim}_{\PP \vectorspace}(X)$. In particular, if $X$ is irreducible then $\incidence$ is irreducible and of dimension $\dim(\vectorspace)$, thus also $\overline{\incidence}$. 
\end{lemma}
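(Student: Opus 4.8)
The plan is to realize $\incidence$ as the total space of a bundle over $\Co$ by computing its fibers explicitly. Fix $x\in\Co$. Since $\Co$ is an open subvariety of $(C_X)_{\mathrm{reg}}$, the embedded tangent space $T_xC_X\subset\vectorspace$ is a linear subspace of dimension $\dim X+1$ and $T_x\Co=T_xC_X$. By the remark on the differential, $x$ is a smooth critical point of $\ell_{F,\bu}|_{\Co}$ precisely when $d_x(\ell_{F,\bu})=\tfrac{d_xF}{F(x)}-\bu$ annihilates $T_xC_X$, i.e.\ $\bu-\tfrac{d_xF}{F(x)}\in(T_xC_X)^{\perp}$, the annihilator in $\vectorspace^\ast$. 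Hence $\pi_\vectorspace^{-1}(x)$ is the affine subspace $\tfrac{d_xF}{F(x)}+(T_xC_X)^{\perp}$ of $\vectorspace^\ast$, of dimension $\dim\vectorspace-(\dim X+1)=\mathrm{codim}_{\PP\vectorspace}(X)=:c$. I would also note here that $\tfrac{d_xF}{F(x)}=\tfrac{g\,d_xf-f\,d_xg}{fg}$ is a regular $\vectorspace^\ast$-valued function on $\Co$, since $fg$ is nonvanishing there.

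Next I would globalize. On the smooth locus of $C_X$ the conormal sheaf $N^\ast_{C_X/\vectorspace}$ is locally free of rank $c$, and by the Jacobian criterion its fiber at $x$ is exactly $(T_xC_X)^{\perp}=\mathrm{rowspan}\,J_X(x)$; write $E\to\Co$ for its total space, a rank-$c$ vector bundle, closed in $\Co\times\vectorspace^\ast$. The shear map $\Psi\colon\Co\times\vectorspace^\ast\to\Co\times\vectorspace^\ast$, $(x,\bu)\mapsto(x,\bu-\tfrac{d_xF}{F(x)})$, is an isomorphism of varieties over $\Co$, and by the fiber computation it carries $\incidence$ isomorphically onto $E$. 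Thus $\pi_\vectorspace\colon\incidence\to\Co$ is isomorphic over $\Co$ to the vector bundle $E$ of rank $c$; equivalently $\incidence$ is the trivial torsor under $E$ with section $x\mapsto\bigl(x,\tfrac{d_xF}{F(x)}\bigr)$, which lies in $\incidence$ because for that value of $\bu$ the differential vanishes identically.

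For the ``in particular'', recall $X$ is irreducible and, by the standing assumptions, not contained in $\mathrm{div}(F)$; hence $C_X$ is irreducible and $\Co=(C_X)_{\mathrm{reg}}\setminus\mathrm{div}(F)$ is a nonempty open subset of $C_X$, so it is irreducible of dimension $\dim X+1$. The total space of a vector bundle over an irreducible variety is irreducible, so $\incidence$ is irreducible of dimension $(\dim X+1)+c=(\dim X+1)+(\dim\vectorspace-1-\dim X)=\dim\vectorspace$, and therefore $\overline{\incidence}$ is irreducible of the same dimension.

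I expect the only point carrying real content to be the local-triviality step: one must check that $x\mapsto(T_xC_X)^{\perp}$ genuinely defines a rank-$c$ subbundle of $\Co\times\vectorspace^\ast$, not merely a family of linear spaces of constant dimension. This is the Jacobian criterion together with the observation that, locally near any smooth point, one can select $c$ of the generators $h_i$ of $\mathcal I(X)$ whose differentials remain linearly independent, furnishing a local frame; the remainder is routine bookkeeping with tangent spaces and with the regularity of $d_xF/F(x)$ on $\Co$.
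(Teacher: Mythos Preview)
Your proof is correct and follows essentially the same approach as the paper: identify the fiber over $x$ as the affine translate $\tfrac{d_xF}{F(x)}+N^\ast_xC_X$, invoke local freeness of the conormal sheaf on the smooth locus (the paper phrases this as trivialization over the open sets where some $c\times c$ minor of $J_X$ is nonzero, which is exactly your choice of local frame), and conclude irreducibility and the dimension count from the vector-bundle structure. Your version is somewhat more explicit---the shear map $\Psi$ and the regularity check for $d_xF/F(x)$ on $\Co$ spell out steps the paper leaves implicit---but the argument is the same.
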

\begin{proof}
    The fiber over a point $x$ is explicitly given by 
    $$
    \left( x, \frac{\nabla_xF}{F(x)} \right) + \{0 \} \times N^\ast_x C_X,
    $$ 
    where $N^\ast_x C_X$ is the conormal space of $C_X$ at $x$, i.e., the row space of the Jacobian matrix $J_X(x)$. The vector bundle trivializes over each open set determined by a $c \times c$ minor of $J_X(x)$ being non-zero. The curious reader can verify that the vector space structure on each fiber is given by
    $$
(x,\bu)+(x,v)=(x,\bu+v-\nabla_x F/F(x)) \qquad t\cdot(x,\bu)=(x,t\bu+(1-t)\nabla_x F/F(x)).
    $$
    Lastly, a vector bundle over an irreducible variety is irreducible and the dimension of a vector bundle is given by the sum of the dimensions of the base and the fibers. 
\end{proof}

\begin{lemma}
    If $X$ is (uni-)rational then so is $\incidence$. 
\end{lemma}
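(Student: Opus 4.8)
The plan is to exploit the vector bundle structure established in Lemma~\ref{lem: Xf dim}, namely that $\pi_\vectorspace\colon \incidence \to \Co$ is a vector bundle of rank $c \coloneqq \mathrm{codim}_{\PP\vectorspace}(X)$. Since a vector bundle is trivial over a dense open subset of its base, $\incidence$ is birational to $\Co \times \CC^{c}$, so it will suffice to show that $\Co$ is (uni)rational and then that products with affine space and passage to dense open subvarieties preserve (uni)rationality. I would organize the argument as: (i) cone up a parametrization of $X$ to one of $C_X$; (ii) restrict to the dense open $\Co$; (iii) multiply by $\CC^{c}$ and use local triviality.

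For step (i): if $\varphi = (\varphi_0 : \cdots : \varphi_n)\colon \CC^{k} \dashrightarrow X \subset \PP\vectorspace$ is a dominant (resp. birational) parametrization, then its cone $\tilde\varphi\colon \CC^{k+1} \dashrightarrow C_X$, $(t,s)\mapsto s\cdot(\varphi_0(t),\dots,\varphi_n(t))$, is again dominant (resp. birational): the image contains $s$ times a dense subset of $C_X$, and over a general $v\in C_X$ the class $[v]$ recovers $t$ and then $s$ is determined by $v$. Hence $C_X$ is (uni)rational. For step (ii), $\Co = (C_X)_{\mathrm{reg}}\setminus\mathrm{div}(F)$ is a dense open subvariety of $C_X$ (the singular locus is proper closed, and $X\not\subset\mathrm{div}(F)$), and (uni)rationality passes to dense open subvarieties, so $\Co$ is (uni)rational.

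For step (iii): a vector bundle is Zariski-locally trivial, so there is a dense open $V\subseteq\Co$ with $\pi_\vectorspace^{-1}(V)\cong V\times\CC^{c}$, and this open set is dense in $\incidence$ because $X$ is irreducible (Lemma~\ref{lem: Xf dim}). A dominant (resp. birational) map $\CC^{k}\dashrightarrow\Co$ restricts to one onto $V$, and multiplying by the identity of $\CC^{c}$ gives a dominant (resp. birational) map $\CC^{k+c}\dashrightarrow V\times\CC^{c}$; since $V\times\CC^{c}$ is dense in $\incidence$, we conclude $\incidence$ is (uni)rational, and the same follows for $\overline{\incidence}$. I do not expect a real obstacle here; the only thing requiring care is carrying the ``rational'' and ``unirational'' cases uniformly through each of the three reductions.
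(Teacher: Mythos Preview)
Your proposal is correct and follows essentially the same approach as the paper: both exploit the vector-bundle structure $\pi_\vectorspace\colon \incidence \to \Co$ from Lemma~\ref{lem: Xf dim} and parametrize the total space as a product with $\CC^{c}$. The paper writes the trivialization explicitly using the gradients $\nabla g_i$ of defining equations of $X$, whereas you appeal to abstract Zariski-local triviality; you are also more careful than the paper in spelling out the coning step that passes (uni)rationality from $X$ to $\Co$.
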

\begin{proof}
    A vector bundle of a (uni-)rational variety is (uni-)rational and we have shown the vector bundle structure of $\incidence$ in Lemma \ref{lem: Xf dim}. Assume without loss of generality that $X = V(g_1, \ldots, g_k)$ is of codimension $c$ and that the defining equations $g_i$ are ordered such that $N_x^\ast C_X = \mathrm{Span}(\{\nabla_x g_i\}_{i=1}^c)$ on an open subset $U \subset X$. This is an explicit trivialization of the conormal bundle on $U$. Suppose for some positive integer $n$ that we are given a dominant map $\Psi \colon \CC^n \dashrightarrow \Co$, then
    \begin{align*}
        \Tilde{\Psi} \colon & \CC^{n}\times \CC^{c} \dashrightarrow \vectorspace \times \vectorspace^\ast \\
        & (t, \lambda) \mapsto \left( \Psi(t), \frac{\nabla_{\Psi(t)}F}{F(\Psi(t))} + \sum_i \lambda_i \nabla_{\Psi(t)} g_i \right)
    \end{align*}
\end{proof}

If the map $\pi_{\vectorspace^\ast} \colon \incidence \to \vectorspace^\ast$ is dominant it is generically finite because the domain and codomain have the same dimension by Lemma \ref{lem: Xf dim}.

\begin{definition}
\label{def: formal-ML-deg}
    The Gaussian \textit{ML degree} of $X \subset \PP \vectorspace$ with respect to $F$, denoted by $\MLD$, is the degree of $\pi_{\vectorspace^\ast} \colon \incidence \to \vectorspace^\ast$ if it is dominant and $0$ otherwise. 
\end{definition}

We interpret $\MLD$ as the number of critical points of $\ell_{F, \bu}|_{\Co}$ for general $\bu$.

\begin{remark}
\label{rem: euler-char}
The maximum likelihood degree is a topological invariant of $\Co$. It is shown in \cite[Theorem~1.3]{DRGS22} that for two homogeneous polynomials $F,G$ of nonzero degree
     $$
     U(F) = U(G) \implies \MLD = \mathrm{MLD}_G(X). 
     $$
Moreover, the proof of this fact doesn't rely on the fact that $F,G$ are polynomials, only that they are homogeneous rational functions of nonzero degrees.
\end{remark}

\begin{lemma}
    For generic $\bu \in \vectorspace^\ast$, the critical points contributing to $\MLD$ are reduced. 
\end{lemma}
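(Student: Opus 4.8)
The plan is to deduce the statement from generic smoothness (generic étaleness in characteristic zero) applied to the finite cover $\pi_{\vectorspace^\ast}\colon \incidence\to\vectorspace^\ast$. First I would dispose of the trivial case: if $\pi_{\vectorspace^\ast}$ is not dominant then $\MLD=0$ and there are no contributing critical points. So assume $\pi_{\vectorspace^\ast}$ is dominant; then, since $\dim\incidence=\dim\vectorspace=\dim\vectorspace^\ast$ by Lemma \ref{lem: Xf dim}, it is generically finite, and by the same lemma $\incidence$ is irreducible (indeed smooth, being a vector bundle over the smooth variety $\Co$). Its degree as a map is $\MLD$ by Definition \ref{def: formal-ML-deg}.

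Next I would invoke that everything takes place over $\CC$: the function field extension $\CC(\vectorspace^\ast)\hookrightarrow\CC(\incidence)$ induced by $\pi_{\vectorspace^\ast}$ is finite and, being in characteristic zero, separable. The standard consequence — generic étaleness, equivalently generic smoothness for a generically finite morphism — gives a dense open $V\subseteq\vectorspace^\ast$ such that $\pi_{\vectorspace^\ast}\colon\pi_{\vectorspace^\ast}^{-1}(V)\to V$ is finite étale; hence for every $\bu\in V$ the scheme-theoretic fibre $\pi_{\vectorspace^\ast}^{-1}(\bu)$ is a disjoint union of $\MLD$ reduced $\CC$-points. The characteristic-zero hypothesis is doing real work here, since an inseparable cover can have every fibre nonreduced.

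Finally I would translate ``reduced point of the fibre'' into ``reduced critical point.'' Using the coordinate presentation of $\mathcal{I}(\overline{\incidence})$ from the preceding Remark and specializing $u=\bu$, one gets precisely the ideal cutting out in $\Co$ the locus where $\nabla_x F/F(x)-\bu$ lies in the conormal space $N^\ast_x C_X$, that is, the critical scheme $V(d\ell_{F,\bu})$ of $\ell_{F,\bu}|_{\Co}$; reducedness of this $0$-dimensional scheme at a point $x$ is exactly nondegeneracy of the Hessian of $\ell_{F,\bu}$ on $T_x\Co$, i.e. the assertion that $x$ is a reduced critical point. Shrinking $V$ once more so that over $\bu\in V$ the fibre of $\overline{\incidence}\to\vectorspace^\ast$ avoids the boundary $\overline{\incidence}\setminus\incidence$, the points counted by $\MLD$ coincide with these $\MLD$ reduced critical points, which finishes the argument.

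I expect the generic-étaleness step to be essentially routine; the one place needing care is the identification in the last paragraph — checking that the scheme structure on $\pi_{\vectorspace^\ast}^{-1}(\bu)$ coming from the minors ideal really is the critical scheme of $\ell_{F,\bu}$ and not something fatter — but this follows because on $\Co$ the Jacobian $J_X$ has constant rank $c$, so the minors ideal presents exactly the incidence ``$d\ell_{F,\bu}|_{T\Co}=0$.'' If one wanted to avoid the function-field machinery altogether, the main obstacle would instead be producing a single pair $(x_0,\bu_0)\in\incidence$ at which the restricted Hessian is nondegenerate, after which irreducibility of $\incidence$ together with a dimension count on the degenerate-Hessian locus would conclude; the route above sidesteps having to exhibit such a point.
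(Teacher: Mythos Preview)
Your proposal is correct and takes essentially the same approach as the paper: both dispose of the non-dominant case, then invoke generic smoothness in characteristic zero (the paper cites Hartshorne~III.10.7 directly, you phrase it as generic \'etaleness of a generically finite map) together with finiteness of fibres over a dense open to conclude that the generic fibre of $\pi_{\vectorspace^\ast}$ consists of $\MLD$ reduced points. Your final paragraph identifying the scheme-theoretic fibre with the critical scheme of $\ell_{F,\bu}$ via the minors ideal is extra care the paper does not spell out, but the core argument is the same.
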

\begin{proof}
    If $\pi_{\vectorspace^\ast}$ is not dominant then $\MLD = 0$. Otherwise, by intersecting the dense open set where $\pi_{\vectorspace^\ast}$ has finite fibers (see \cite[Theorem~10.12]{Mil09}) and the dense open set where it is a smooth morphism (see \cite[Corollary~10.7]{Har77}) we have a dense open set where we have finitely many smooth points of $\xi_F$ in the fiber. 
\end{proof}

There is a closed subset of $\vectorspace^\ast$ where the fibers of $\pi_{\vectorspace^\ast}$ are not zero-dimensional and reduced. One might refer to this set as the `Gaussian ML-discriminant' in spirit of the Euclidean distance discriminant \cite{DHOST16}.

\begin{lemma}
\label{lem: uxdegf}
    The critical points of $\ell_{F, \bu}|_{\Co}$ satisfy 
    $$
    \bu(x) = \deg(F).
    $$
\end{lemma}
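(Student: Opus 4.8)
The plan is to exploit the fact that $\Co$ is (an open subset of) a cone, together with Euler's homogeneity relation for $F$. First I would recall the characterization of critical points: a smooth point $x \in \Co$ is a critical point of $\ell_{F,\bu}|_{\Co}$ precisely when the differential
$$
d_x(\ell_{F,\bu}) \;=\; \frac{d_xF}{F(x)} - \bu \;\in\; \vectorspace^\ast
$$
annihilates the tangent space $T_x C_X$. Since $x \notin \mathrm{div}(F)$ the scalar $F(x)$ is well-defined and nonzero, so this covector makes sense.

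Next I would use that $C_X$ is invariant under rescaling, hence the Euler (position) vector field $v \mapsto v$ is tangent to $C_X$ at every smooth point; in particular the vector $x$ itself lies in $T_x C_X$. Evaluating the annihilating covector $d_x(\ell_{F,\bu})$ on this tangent vector gives
$$
\frac{d_xF(x)}{F(x)} - \bu(x) = 0.
$$
Finally I would invoke Euler's relation for $F$: writing $F = f/g$ with $f,g$ homogeneous and $\deg(f) - \deg(g) = \deg(F) =: d$, the quotient rule gives $\nabla_x F = (g\nabla_x f - f\nabla_x g)/g^2$, and pairing with $x$ together with the classical Euler identities $\nabla_x f \cdot x = \deg(f)\, f(x)$, $\nabla_x g \cdot x = \deg(g)\, g(x)$ yields $d_xF(x) = \nabla_xF \cdot x = d\,F(x)$. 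Substituting this into the displayed equation gives $d - \bu(x) = 0$, i.e.\ $\bu(x) = \deg(F)$.

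I do not expect a genuine obstacle here; the only point that deserves a line of justification is that Euler's identity survives the passage from homogeneous polynomials to homogeneous rational functions of nonzero degree, which is exactly the computation above applied to numerator and denominator separately. One should also note in passing that the same argument applies verbatim on $\overline{\incidence}$ by continuity, so the equation $\bu(x) = \deg(F)$ cuts out $\overline{\incidence}$ inside $\vectorspace \times \vectorspace^\ast$ scheme-theoretically along the fibers where it matters.
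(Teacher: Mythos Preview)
Your argument is correct and essentially identical to the paper's: both use that $x\in T_xC_X$ because $C_X$ is a cone, evaluate the vanishing differential $\frac{\nabla_xF}{F(x)}-\bu$ at $x$, and invoke Euler's identity to get $\bu(x)=\deg(F)$. Your explicit quotient-rule verification of Euler's relation for homogeneous rational functions is a welcome extra line; the closing remark about $\overline{\incidence}$ is extraneous to the lemma as stated but harmless.
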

\begin{proof}
    Observe that 
\begin{align*}
    (x, \bu) \in \incidence \iff   \frac{\nabla_x F}{F(x)} - \bu \in N^\ast_xC_X.
\end{align*}
The vector space $N^\ast_xC_X$ is characterized as the linear forms that vanish on the embedded tangent space $T_xC_X$. Since $C_X$ is a cone we have that $x \in T_xC_X$ and thus 
\begin{align*}
    (x, \bu) \in \incidence \iff \frac{\nabla_x F}{F(x)}(x) - \bu(x) = 0 \implies \deg(F) - \bu(x) = 0 
\end{align*}
where the last implication is due to Euler's homogeneous function theorem.
\end{proof}

\begin{proposition}
\label{prop: projectiveMLcorr}
    The maximum likelihood degree of $X$ can be computed as the degree of the projection to $\vectorspace^\ast$ from the projective maximum likelihood correspondence. 
    $$
\MLD = \deg(\pi_{\vectorspace^\ast} \colon \mathfrak{X}_{F} \dashrightarrow \vectorspace^\ast)
    $$
\end{proposition}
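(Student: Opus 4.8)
The plan is to relate the affine maximum likelihood correspondence $\incidence$ to the projective correspondence $\mathfrak{X}_F$ by tracking how the $\PP\vectorspace$-coordinate behaves under projectivization. First I would observe that by Lemma \ref{lem: Xf dim} the fiber of $\pi_\vectorspace \colon \incidence \to \Co$ over a smooth point $x$ is the affine linear space $\left(x, \nabla_x F / F(x)\right) + \{0\}\times N^\ast_x C_X$; since $N^\ast_x C_X$ is the conormal space of the cone, it equals the conormal space of the tangent space $\TT_x X$, i.e. $(\TT_x X)^\vee$ lifted to $\vectorspace^\ast$. Thus the image of the fiber in $\vectorspace^\ast$ is an affine translate of $(\TT_x X)^\vee$ by the vector $\nabla_x F/F(x)$, which (after projectivizing and using that $\nabla_x F$ and $\nabla_x F/F(x)$ define the same point of $\PP\vectorspace^\ast$) is exactly the affine cone over the linear space $\gamma^\vee_{X,F}(x) = (\TT_x X)^\vee + \nabla_x F \in \mathrm{Gr}(\mathrm{codim}_{\PP\vectorspace}(X), \PP\vectorspace^\ast)$. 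Hence, at the level of underlying sets, $\overline{\incidence}$ and $\mathfrak{X}_F$ agree over the locus where $x$ ranges in $X_{\mathrm{reg}}\setminus \mathrm{div}(F)$ — the definition of $\mathfrak{X}_F$ as $\{(x,\bu) : \exists L,\ (x,L)\in\overline{\Gamma}_{\gamma^\vee_{X,F}},\ [\bu]\in L\}$ is precisely the closure of this incidence.

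Next I would make this precise as an equality of varieties. Define the rational map $\rho \colon \incidence \dashrightarrow X \times \vectorspace^\ast$ by $(x,\bu)\mapsto ([x],\bu)$; this is well-defined on $\Co \times \vectorspace^\ast \cap \incidence$ since $x\neq 0$ there. The previous paragraph identifies the image of $\rho$ with the graph-of-incidence set defining $\mathfrak{X}_F$ before taking closure. Taking closures on both sides and using that $\incidence$ is irreducible (Lemma \ref{lem: Xf dim}), I get that $\overline{\rho(\incidence)} = \mathfrak{X}_F$, so $\mathfrak{X}_F$ is irreducible of dimension $\dim\vectorspace - 1$ (one less than $\overline{\incidence}$, since we quotiented the first factor by scaling and $C_X$ is a cone of dimension $\dim X + 1$, whose fibers of $C_X \to X$ are one-dimensional). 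The key point is that $\rho$ is compatible with the projections to $\vectorspace^\ast$: the diagram with $\pi_{\vectorspace^\ast}$ on $\overline{\incidence}$ and on $\mathfrak{X}_F$ commutes through $\rho$.

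It then remains to compare the two projections to $\vectorspace^\ast$. I would argue that the map $\rho \colon \overline{\incidence} \dashrightarrow \mathfrak{X}_F$ restricts, over a general $\bu \in \vectorspace^\ast$, to a bijection on fibers. Concretely, Lemma \ref{lem: uxdegf} shows that every $(x,\bu)\in\incidence$ satisfies the \emph{affine} normalization $\bu(x) = \deg(F)\neq 0$; this pins down a unique representative $x$ in each line $C_{[x]}$ of the cone, so the projectivization map $\incidence \to \mathfrak{X}_F$ is injective on each fiber over $\bu$ (no two distinct points of a fiber of $\pi_{\vectorspace^\ast}|_{\incidence}$ can have proportional $x$-coordinates unless they are equal). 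Conversely it is surjective onto the fiber of $\pi_{\vectorspace^\ast}|_{\mathfrak{X}_F}$ over $\bu$, since given $([x],\bu)\in\mathfrak{X}_F$ with $\bu$ general — hence $\bu(x)\neq 0$ on the relevant component — we may rescale the cone representative to meet $\bu(x)=\deg(F)$ and land back in $\incidence$. Therefore $\pi_{\vectorspace^\ast}|_{\incidence}$ is dominant if and only if $\pi_{\vectorspace^\ast}|_{\mathfrak{X}_F}$ is, and when dominant they have equal degree; by Definition \ref{def: formal-ML-deg} this degree is $\MLD$, giving $\MLD = \deg(\pi_{\vectorspace^\ast}\colon \mathfrak{X}_F \dashrightarrow \vectorspace^\ast)$.

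I expect the main obstacle to be the careful bookkeeping around closures and genericity: verifying that the generic fiber of $\pi_{\vectorspace^\ast}|_{\mathfrak{X}_F}$ really does lie in the image of $\rho$ and consists only of points with $\bu(x)\neq 0$, so that the rescaling back into $\incidence$ is legitimate. One must also rule out extraneous components of $\mathfrak{X}_F$ coming from the Zariski closure in the definition (points $([x],\bu)$ with $x\in X_{\mathrm{sing}}$ or $x\in\mathrm{div}(F)$), which is handled by noting such loci are lower-dimensional and cannot dominate $\vectorspace^\ast$ since $\dim\mathfrak{X}_F = \dim\vectorspace - 1 = \dim\vectorspace^\ast - 1$ forces the generic fiber to be finite and supported on the open part coming from $\Co$.
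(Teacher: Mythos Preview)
Your approach is essentially the paper's: your map $\rho(x,u)=([x],u)$ is exactly the paper's $\alpha$, and your rescaling $x\mapsto \deg(F)\,x/u(x)$ using Lemma~\ref{lem: uxdegf} is the paper's explicit inverse $\beta$, so the core argument is correct and matches. One bookkeeping slip to fix: $\mathfrak{X}_F$ has dimension $\dim\vectorspace$, not $\dim\vectorspace-1$ --- the normalization $u(x)=\deg(F)$ already makes $\rho$ injective on $\incidence$ (no two proportional $x$'s survive), so projectivizing the first factor loses nothing; your own fiber-bijection argument confirms this, and the dimension reasoning in your final paragraph should be corrected accordingly (otherwise $\pi_{\vectorspace^\ast}$ could never be dominant).
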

\begin{proof}
    We show that there is a commutative diagram of projections onto $\vectorspace^\ast$
\begin{center}
\begin{tikzpicture}[every node/.style={midway}]
\matrix[column sep={6em,between origins}, row sep={2em}] at (0,0) {
        \node(A) {$\incidence$}; & \node(B) {$\mathfrak{X}_{F}$}; ; \\
        \node(C) {}; & \node(D) {$\vectorspace^\ast$};  ; \\
        };
        \draw[dashed, ->] (A) -- (B) node[above]{$\alpha$};
        \draw[->] (B) -- (D) node[right]{$\Tilde{\pi}_{\vectorspace^\ast}$};
        \draw[->] (A) -- (D) node[left]{$\pi_{\vectorspace^\ast}$} ;.
\end{tikzpicture}
\end{center}
where $\alpha$ is birational and thus by definition
$$
\MLD = \deg(\pi_{\vectorspace^\ast}) = \deg(\Tilde{\pi}_{\vectorspace^\ast} \circ \alpha) = \deg(\Tilde{\pi}_{\vectorspace^\ast} )\cdot \deg(\alpha) = \deg(\Tilde{\pi}_{\vectorspace^\ast}).
$$
To prove this we define 
\begin{align*}
    \alpha(x, \bu) = ([x], \bu) \\
    \beta([x], \bu) = (\frac{\deg(F) x}{\bu(x)}, \bu).
\end{align*}
The image of $\alpha$ is contained in $\mathfrak{X}_{F}$ because for $x \in \Co $ we have that $L = \gamma^\vee_{X,F} ([x])$ ensures that $(x,L) \in  \Gamma_{\gamma^\vee_{X,F}} $ and $[\bu] \in L$. The domain and codomain of $\alpha$ are equidimensional irreducible varieties. To complete the proof it is therefore sufficient to show that $\beta \circ \alpha = \mathrm{Id}_{\incidence}$. This is equivalent to the fact that all critical points of $\ell_{F, \bu}|_{\Co}$ satisfy $\bu(x) = \deg(F)$. This is shown in Lemma \ref{lem: uxdegf}. This concludes the proof.
\end{proof}

\begin{definition}
\label{def: order}
    Let $Y \subset \mathrm{Gr}(k, \PP^n)$ be a closed subvariety and $p \in \PP^n$ a generic point. We define the \emph{order} of $Y$ to be the cardinality
    $$
\mathrm{order}(Y) = \# (Y \cap \Sigma_k(p)) = \#\{L \in Y \mid \, p \in L \}. 
    $$
    Here $\Sigma_k(p)$ is a Schubert variety. See \cite{EH16} for more details.
\end{definition}

\begin{example}
    An example of a variety $Y\subset \mathrm{Gr}(k, \PP \vectorspace)$ of order $1$ can be constructed as the set of $k$ planes that contain a fixed $k-1$ plane $\PP \secondvectorspace \subset \PP\vectorspace$. We denote this variety by $\PP(\vectorspace / \secondvectorspace)$. Indeed, for any point $p \not \in \PP \secondvectorspace$ there is a unique $L =   p + \PP \secondvectorspace \in Y$ that is uniquely determined by the element of the projective space $\PP(\vectorspace / \secondvectorspace)$ that $p$ represents.
\end{example}

\begin{proof}[Proof of Theorem \ref{thm: F-adjoineddegree}]
We compute the maximum likelihood degree via the Proposition \ref{prop: projectiveMLcorr} as the degree of
$\pi_{\vectorspace^\ast}\colon \mathfrak{X}_{F} \dashrightarrow \vectorspace^\ast$. By definition there are $\mathrm{order}(\overline{\mathrm{Im}(\gamma^\vee_{X,F})})$ many choices of $L$ that contain a generic point $[\bu]$. Moreover, again by definition, for each such $L$ there exists either $0$ or $\deg(\gamma^\vee_{X,F})$ many choices of $x$ such that $(x,L) \in \overline{\Gamma}_{\gamma^\vee_{X,F} } $ depending on whether $\gamma^\vee_{X,F}$ is generically finite or not. This concludes the proof. 
\end{proof}

\begin{lemma}\label{lem: lindegree}
    Let $F = \alpha \in \vectorspace^\ast$ be a linear polynomial and assume that the dual variety $X^\vee$ is a hypersurface, then 
    \[
    \MLD \leq \mathrm{deg} (X^\vee).
    \]
    If $\alpha$ is generic this is an equality.
\end{lemma}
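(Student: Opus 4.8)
The plan is to read the product formula (Theorem~\ref{thm: F-adjoineddegree}) through the critical-point description and exploit that, since $X^\vee$ is a hypersurface, the conormal variety $\Con$ projects \emph{birationally} onto $X^\vee$. Because $F=\alpha$ is linear, $\nabla_pF=\alpha$ for every $p$, so $\gamma^\vee_{X,F}(p)=(\TT_pX)^\vee+[\alpha]$; equivalently, a smooth point $\tilde x\in\Co$ is a critical point of $\ell_{F,\bu}=\log\alpha(x)-\bu(x)$ iff $\tfrac{\alpha}{\alpha(\tilde x)}-\bu\in N^\ast_{\tilde x}C_X$, i.e.\ (multiplying by $\alpha(\tilde x)\neq 0$) iff $\alpha-\alpha(\tilde x)\bu$ lies in the conormal space. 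Hence the class $[\alpha-\alpha(\tilde x)\bu]$ is a point of the projectivized conormal space $N_{[\tilde x]}:=(\TT_{[\tilde x]}X)^\vee\subseteq X^\vee$ lying on the line $\ell:=\overline{[\alpha],[\bu]}\subset\PP\vectorspace^\ast$. I would first record that for generic $\bu$ (so $[\bu]\notin X^\vee$, hence $\bu\notin N^\ast_{[\tilde x]}$) the conormality relations force both that $[\alpha]\notin N_{[\tilde x]}$ at any critical point and that $\tilde x\mapsto[\tilde x]$ is injective on critical points (subtract two relations to get $\alpha(\tilde x)=\alpha(\tilde x')$, then use $[\tilde x]\notin V(\alpha)$). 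This gives, for generic $\bu$,
\[
\MLD=\#\{\tilde x\ \text{critical}\}\ \le\ \#\{\,p\in X_{\mathrm{reg}}\ :\ N_p\cap\ell\neq\varnothing\,\},
\]
and the reverse construction — given $p$ and $H\in N_p\cap\ell$, write $H=[\alpha-\lambda\bu]$ and set $\tilde x=\tfrac{\lambda}{\alpha(p)}p$ — makes this an equality whenever $\alpha(p)\neq 0$ for the relevant $p$.

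The core step is the bound $\#\{p\in X_{\mathrm{reg}}:N_p\cap\ell\neq\varnothing\}\le\deg X^\vee$. Since $X$ is irreducible, $X^\vee$ is an irreducible proper subvariety, and by hypothesis a hypersurface; not every line through $[\alpha]$ lies on it, so for generic $\bu$ the line $\ell$ is not contained in $X^\vee$ and meets it in at most $\deg X^\vee$ points (B\'ezout). If $N_p\cap\ell\neq\varnothing$ then, as $N_p$ is linear and $\ell\not\subseteq N_p\subseteq X^\vee$, the intersection is a single point $H(p)\in\ell\cap X^\vee$ with $(p,H(p))\in\Con$. As $X^\vee$ is a hypersurface, the projection $\Con\to X^\vee$ is birational, hence one-to-one over a dense open $V\subseteq X^\vee$; a standard incidence argument — the variety $\{([\bu],H):H\in X^\vee,\ H\in\overline{[\alpha],[\bu]}\}$ dominates $X^\vee$ — shows that for generic $\bu$ all points of the finite set $\ell\cap X^\vee$ lie in $V$. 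Thus $p\mapsto H(p)$ is injective, giving $\#\{p\}\le\#(\ell\cap X^\vee)\le\deg X^\vee$, so $\MLD\le\deg X^\vee$. (Equivalently, this is the product formula $\MLD=\deg(\gamma^\vee_{X,F})\cdot\mathrm{order}(\overline{\mathrm{Im}(\gamma^\vee_{X,F})})$ read via Proposition~\ref{prop: projectiveMLcorr}: the pairs $(p,L)$ contributing to $\mathfrak{X}_F$ over a generic $\bu$ inject into $\ell\cap X^\vee$.)

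For the equality, take $[\alpha]$ generic in $\PP\vectorspace^\ast$, so $[\alpha]\notin X^\vee$. For generic $\bu$, $\ell$ is transverse to $X^\vee$ and meets it in $\deg X^\vee$ distinct points $H_1,\dots,H_{\deg X^\vee}$, each a generic (hence smooth) point of $X^\vee$; by birationality each $H_i$ has a unique preimage $(x_i,H_i)\in\Con$ with $x_i\in X_{\mathrm{reg}}$, and then $x_i$ is a generic point of $X$. Writing $H_i=[\alpha-\lambda_i\bu]$ with $\lambda_i\neq 0$ (as $H_i\notin\{[\alpha],[\bu]\}$, both being outside $X^\vee$) and putting $\tilde x_i=\tfrac{\lambda_i}{\alpha(x_i)}x_i$, one checks $\tfrac{\alpha}{\alpha(\tilde x_i)}-\bu=\tfrac{1}{\lambda_i}(\alpha-\lambda_i\bu)\in N^\ast_{\tilde x_i}C_X$, so each $\tilde x_i$ is a genuine critical point in $\Co$; they are pairwise distinct (if $x_i=x_j$ then $H_i$ and $H_j$ both belong to $N_{x_i}\cap\ell$, a single point, forcing $i=j$). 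The one point requiring care is $\alpha(x_i)\neq 0$; I would handle it by noting that the incidence variety of triples $(\alpha,\bu,x)$ produced by this construction dominates the $(\alpha,\bu)$-parameter space while $\{\alpha(x)=0\}$ is a proper closed subset of it, hence avoided for generic $(\alpha,\bu)$. Together with the upper bound — and the fact that in characteristic zero $\MLD$ equals the cardinality of the generic fiber of the (now dominant, generically finite) projection from $\mathfrak{X}_F$ — this yields $\MLD=\deg X^\vee$. I expect the genuine difficulty to lie precisely in these genericity assertions (that $\ell\cap X^\vee$ consists of smooth points over which $\Con\to X^\vee$ is injective, and that the $x_i$ avoid $V(\alpha)$), rather than in any computation; this is where the hypothesis that $X^\vee$ is a hypersurface, equivalently the birationality and irreducibility of $\Con$, is essential.
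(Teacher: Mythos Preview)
Your proof is correct and follows essentially the same route as the paper's: translate the criticality condition into ``$(\TT_pX)^\vee$ meets the line $\ell=\overline{[\alpha],[\bu]}$,'' then use that $\Con\to X^\vee$ is birational (biduality, using the hypersurface hypothesis) to inject the relevant $p$'s into $X^\vee\cap\ell$, and finally invoke B\'ezout. The paper reaches the projective picture in one step via Proposition~\ref{prop: projectiveMLcorr}, whereas you work it out from the affine relation $\tfrac{\alpha}{\alpha(\tilde x)}-\bu\in N^\ast_{\tilde x}C_X$ and then pass to $[\tilde x]$; but the content is the same, and your treatment of the genericity issues (injectivity of $p\mapsto H(p)$, avoiding $V(\alpha)$, $\ell\cap X^\vee$ landing in the locus where biduality gives a unique preimage) is in fact more careful than the paper's very brief argument.
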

\begin{proof}
This lemma is a direct consequence of Theorem \ref{thm: finalpolarformula} but we give a direct proof. For a generic $\bu \in \vectorspace^\ast$, Proposition \ref{prop: projectiveMLcorr} and the fact $\nabla F = \alpha \not \in X^\vee$ enables us to compute $\MLD$ as the number of points in
    $$
\{ x \in X_\mathrm{reg} \colon (\TT_xX)^\vee \cap (\alpha + \bu) \neq \emptyset \}
    $$
    where $\alpha + \bu$ denotes the projective line spanned by the two points. Now we use bi-duality \cite[Theorem~1.1]{GKZ94} to see that a generic point $H \in X^\vee$ is only tangent to a unique point $x \in X_{\mathrm{reg}}$ and the linear spaces $(\TT_xX)^\vee$ rule a dense subset of $X^\vee$. If $\alpha$ is generic then the line $\alpha + \bu$ is generic and we can avoid the boundary points. So to compute the maximum likelihood degree we can equivalently count (or bound from above if $\alpha$ is not generic) the $\MLD$ through the number of points in 
    $$
X^\vee \cap (\alpha + \bu),
    $$
    which is the desired quantity by definition.
\end{proof}

\begin{remark}
    To finish this section we note that the maximum likelihood degree under appropriate assumptions is the top Segre class of the projective maximum likelihood correspondence $\mathfrak{X}_F$, seen as a \emph{cone} (e.g. a vector bundle) over $X$, in the sense of \cite[Chapter~4]{Ful98}. This is the idea that motivated Theorem \ref{thm: finalpolarformula}, although it is not part of the proof. Under some genericity assumptions $\mathfrak{X}_F$ fits in the middle of an exact sequence because $\PP(\mathfrak{X}_F)$ both contains and is `spanned by' the conormal variety of $X$ and the graph of $\nabla F|_X \colon X \dashrightarrow \PP\vectorspace^\ast$. 
\end{remark}

\section{Curves of Gaussian Maximum likelihood degree 1}
\label{sec: classifycurves}

This section is dedicated to classifying complex projective curves and homogeneous polynomials $F$ admitting $\MLD = 1$. The main idea of the classification is that for a curve with rational MLE Theorem \ref{thm: F-adjoineddegree} guarantees that the image of $\gamma_{X,F} $ is a line.

\begin{lemma}
\label{lem: ML1bounddimension}
    Let $Y \subset \PP \vectorspace$ be the variety that is ruled by $\overline{\mathrm{Im}(\gamma_{X,F} )}$ and $S_X$, $S_Y$ be the linear span of $X$ and $Y$ respectively, then
    $$
    \dim S_X \leq \dim S_Y +1.
    $$
\end{lemma}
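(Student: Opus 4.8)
The plan is to analyze, for a generic point $p\in X$, how the linear space $\gamma_{X,F}(p)$ sits inside the embedded tangent space $\TT_pX$, and then to deduce the span inequality by a linear-projection argument on the affine cones.

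First I would pin down the position of $\gamma_{X,F}(p)$ inside $\TT_pX$. Since $X$ is irreducible and not contained in $\mathrm{div}(F)$, a generic $p\in X$ lies in $X_{\mathrm{reg}}\cap U(F)$; fix a lift $v\in\vectorspace$ of $p$. Euler's identity for the homogeneous rational function $F$ gives $\langle\nabla_vF,v\rangle=\deg(F)\cdot F(v)$, which is nonzero because $\deg(F)\neq0$ and $F(v)\neq0$. Hence $v\notin H_v:=\ker(\nabla_vF)$, equivalently $p\notin\nabla_pF^\vee$. Passing to the affine tangent space $T_vC_X$, whose projectivization is $\TT_pX$, and using $v\in T_vC_X$, this shows $T_vC_X\not\subseteq H_v$, so $T_vC_X\cap H_v$ is a hyperplane in $T_vC_X$ and $T_vC_X=(T_vC_X\cap H_v)\oplus\CC v$. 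Projectively, this says $\gamma_{X,F}(p)$ is a genuine $(\dim X-1)$-plane inside $\TT_pX$ which does not contain $p$, so $\TT_pX$ is spanned by $\gamma_{X,F}(p)$ together with the single extra point $p$.

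Next, let $W_Y\subseteq\vectorspace$ be the linear subspace with $\PP W_Y=S_Y$. Since $\gamma_{X,F}(p)\subseteq Y$ for generic $p$ by the definition of $Y$, the affine cone over $\gamma_{X,F}(p)$, namely $T_vC_X\cap H_v$, lies in $C_Y\subseteq W_Y$; hence $T_vC_X\subseteq W_Y+\CC v$. Now consider the linear projection $\pi\colon\vectorspace\to\vectorspace/W_Y$ and its restriction $g:=\pi|_{C_X}$. At a generic smooth $v\in C_X$ we have $d_vg(T_vC_X)=\pi(T_vC_X)\subseteq\pi(W_Y+\CC v)=\CC\pi(v)$, so $\operatorname{rank} d_vg\le1$. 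By generic smoothness in characteristic zero this forces $\dim\overline{g(C_X)}\le1$; being an irreducible affine cone, $\overline{g(C_X)}$ is then either $\{0\}$ or a single line through the origin, so it spans a subspace of $\vectorspace/W_Y$ of dimension $\le1$. Since $C_X$ spans $W_X$ (the cone over $S_X$) and $\pi$ is linear, $\overline{g(C_X)}$ spans $\pi(W_X)=(W_X+W_Y)/W_Y$, whence $\dim_\CC(W_X+W_Y)\le\dim_\CC W_Y+1$, and therefore $\dim_\CC W_X\le\dim_\CC W_Y+1$. Projectivizing yields $\dim S_X\le\dim S_Y+1$.

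The main obstacle is the first step: one must be certain that $\gamma_{X,F}(p)$ has the expected dimension $\dim(X)-1$ and, crucially, that it misses $p$, since this is exactly what makes $\TT_pX$ the span of $\gamma_{X,F}(p)$ and the one extra point $p$. This is where Euler's homogeneous function theorem and the hypothesis $\deg(F)\neq0$ are essential; without the degree hypothesis the gradient could vanish to order one at $p$ and the argument collapses. Once this local picture is in place, the passage to affine cones and the rank-of-differential bound are routine.
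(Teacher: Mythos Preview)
Your proof is correct, and it takes a genuinely different route from the paper's argument. Both proofs rest on the same local observation, namely that for generic $p\in X$ one has $\TT_pX = p + \gamma_{X,F}(p)$ with $\gamma_{X,F}(p)\subset S_Y$; the paper simply asserts this, while you derive it cleanly from Euler's identity and the hypothesis $\deg(F)\neq 0$.

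Where the arguments diverge is in the global step. The paper studies the join $Z=\mathrm{join}(X,S_Y)$, applies Terracini's lemma to compute $\dim Z=\dim S_Y+1$ and $\dim Z^2\le\dim Z+1$, and then invokes a result of {\AA}dlandsvik \cite[Proposition~1.4]{Ad87} to conclude that $Z$ is a linear space containing $X$. Your argument instead passes to affine cones, projects $C_X$ linearly to $\vectorspace/W_Y$, and uses the tangent-space decomposition to bound the generic rank of the differential by $1$; generic smoothness and the fact that the image is an irreducible cone finish the job. Your approach is more self-contained---it avoids both Terracini's lemma and the external reference---at the cost of working in affine rather than projective language. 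The paper's route has the advantage of identifying the join $Z$ explicitly as the linear space $S_X$ (or a hyperplane containing it), which may be conceptually cleaner for readers already comfortable with secant-variety machinery.
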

\begin{proof}
If $X \subset S_Y$ the statement is immediate. Assume this is not the case and thus a generic point of $X$ is outside of $S_Y$. We start by studying $Z := \text{join} (X, S_Y)$, this is the closure of the union of all lines that meet $X$ and $S_Y$ in distinct points \cite[Lecture~8]{J92}. For a generic $p \in X$ 
$$
\TT_pX = p +\gamma_{X,F} (p)
$$
where $+$ denotes the linear span. By construction $\gamma_{X,F} (p) \subset S_Y $. We use the Terracini Lemma \cite[Lemma~1.11]{Ad87} to conclude that for a generic point $p \in X$
    \begin{align*}
        \dim Z &= \dim (\TT_p X +  S_Y) = \dim (\gamma_{X,F} (p) + p+  S_Y) \\
        &= \dim (p + S_Y) = \dim(S_Y) + 1.
    \end{align*}
    Moreover, taking the secant of $Z$ and applying Terracini Lemma we have that the secant variety, denoted by $Z^2$, has dimension equal to
\begin{align*}
        \dim Z^2 =& \dim (\TT_pX+ \TT_qX+ S_Y) = \dim  (\gamma_{X,F} (p)+ p+\gamma_{X,F} (q)+ q+ S_Y)  \\
        =&\dim ( p+ q+ S_Y) \leq \dim S_Y + 2 = \dim(Z) + 1.
\end{align*}
    and thus by \cite[Proposition~1.4]{Ad87} we have that $Z$ is a linear space that contains $X$, thus the linear span of $X$ must have smaller dimension than $Z$.
\end{proof}

\begin{corollary}
    \label{cor: subspacedualishyper}
    Let $X \subsetneq \PP\vectorspace$ with linear span $S_X = \PP\vectorspace$ and $\gamma_{X,F}(x) \subset \PP \secondvectorspace$ for some a linear subspace $\secondvectorspace \subsetneq \vectorspace$. Then $\PP\secondvectorspace$ is a hyperplane. 
\end{corollary}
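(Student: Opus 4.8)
The plan is to deduce this immediately from Lemma~\ref{lem: ML1bounddimension}. First I would let $Y\subset\PP\vectorspace$ be the variety ruled by $\overline{\mathrm{Im}(\gamma_{X,F})}$ and write $S_X$, $S_Y$ for the linear spans of $X$ and $Y$, exactly as in that lemma. By hypothesis, over the dense open locus where $\gamma_{X,F}$ is defined every value $\gamma_{X,F}(x)$ is a linear subspace contained in $\PP\secondvectorspace$; since $Y$ is the closure of the union of these subspaces, this gives $Y\subseteq\PP\secondvectorspace$, and hence $S_Y\subseteq\PP\secondvectorspace$, so $\dim S_Y\le\dim\PP\secondvectorspace$.

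Next I would feed this into Lemma~\ref{lem: ML1bounddimension}, which gives
\[
\dim S_X \le \dim S_Y + 1 \le \dim\PP\secondvectorspace + 1 .
\]
Using the assumption $S_X=\PP\vectorspace$, this reads $\dim\PP\vectorspace\le\dim\PP\secondvectorspace+1$, i.e.\ $\dim\PP\secondvectorspace\ge\dim\PP\vectorspace-1$. On the other hand, $\secondvectorspace\subsetneq\vectorspace$ forces $\dim\PP\secondvectorspace\le\dim\PP\vectorspace-1$. The two inequalities together yield $\dim\PP\secondvectorspace=\dim\PP\vectorspace-1$, so $\PP\secondvectorspace$ is a hyperplane, as claimed.

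The only points needing a word of care are bookkeeping rather than genuine obstacles: that $\gamma_{X,F}$ is a bona fide rational map (so that $\overline{\mathrm{Im}(\gamma_{X,F})}$ and the ruled variety $Y$ are well-defined, nonempty varieties) — this is guaranteed by the standing hypotheses on $X$ and $F$ in Section~\ref{sec: Notation} — and the interpretation of the containment $\gamma_{X,F}(x)\subset\PP\secondvectorspace$ over the open set of definition, which is harmless since $Y$ is a closure. All the real content is already packaged in Lemma~\ref{lem: ML1bounddimension}, so I expect the proof to be only a couple of lines.
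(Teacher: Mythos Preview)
Your proposal is correct and follows essentially the same approach as the paper's proof: define $Y$ as the ruled variety, observe $S_Y\subseteq\PP\secondvectorspace$, and then chain the inequality from Lemma~\ref{lem: ML1bounddimension} with $S_X=\PP\vectorspace$ and $\secondvectorspace\subsetneq\vectorspace$ to force $\dim\PP\secondvectorspace=\dim\PP\vectorspace-1$. The paper's version is just the terse form of what you wrote.
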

\begin{proof}
    Let $Y$ denote the variety ruled by $\overline{\mathrm{Im}(\gamma_{X,F} )}$. By assumption we have that the linear span $S_Y$ of $Y$ satisfies $S_Y \subset \PP\secondvectorspace$. 
    By Lemma \ref{lem: ML1bounddimension} we have that 
    $$
\dim \PP\vectorspace = \dim S_X \leq \dim(S_Y)+1 \leq \dim(\PP\secondvectorspace)+1 \leq \dim \PP\vectorspace
    $$
    and thus $ \dim(\PP\secondvectorspace) = \dim \PP\vectorspace -1 $. 
\end{proof}

The next proposition reveals the relation between polynomials $F$ and hypersurfaces $X$ with rational MLE under the assumption that $\overline{\mathrm{Im}(\gamma^\vee_{X,F} )}$ is the collection of lines passing through some fixed point $[\alpha] \in \PP\vectorspace^\ast$. This will include all plane curves but also varieties of higher dimension. 

\begin{proposition}
\label{prop: subdualsML1}
Suppose that $X \subset \PP \vectorspace$ is a non-linear hypersurface, $\mathrm{MLD}_F(X) = 1$ and that $\overline{\mathrm{Im}(\gamma^\vee_{X,F} )}$ is the family of lines passing through a point $[\alpha]$, for some $\alpha \in \vectorspace^\ast$. Then $\mathrm{MLD}_\alpha(X) = 1$ and
$$
\frac{F}{\alpha^{\deg(F)}} \Big|_X =  \frac{\Phi_{X,\vectorspace, F} }{(\deg(F)\Phi_{X,\vectorspace, \alpha})^{\deg(F)}} = \text{constant}.
$$
Here $\Phi$ denotes the corresponding solution to the homaloidal PDE and $\alpha$ can be chosen such that the constant is $1$, in which case $F - \alpha^{\deg(F)} \in \mathcal{I}(X)$.  
\end{proposition}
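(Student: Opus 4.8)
The plan is to show first that the hypothesis forces the two dual $F$-adjoined Gauss maps $\gamma^\vee_{X,F}$ and $\gamma^\vee_{X,\alpha}$ to be equal as rational maps, and then to read off everything from this single identity. Write $X = V(h)$ with $h$ irreducible, so that $(\TT_pX)^\vee = [\nabla_p h]$ at a smooth point $p$ and $X^\vee = \overline{\{[\nabla_p h] : p \in X_{\mathrm{reg}}\}}$. Since $X$ is non-linear, $X^\vee$ is not a point, so $[\nabla_p h] \neq [\alpha]$ for generic $p$. For such $p$ the value $\gamma^\vee_{X,F}(p) = (\TT_pX)^\vee + \nabla_pF$ is a genuine line of $\PP\vectorspace^\ast$, it contains $[\nabla_p h]$, and by hypothesis it passes through $[\alpha]$; being the line through the distinct points $[\nabla_p h]$ and $[\alpha]$ it must coincide with $(\TT_pX)^\vee + \alpha = \gamma^\vee_{X,\alpha}(p)$ (recall $\nabla_p\alpha = \alpha$ for the linear form $\alpha$). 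Hence $\gamma^\vee_{X,F} = \gamma^\vee_{X,\alpha}$, so $\overline{\Gamma}_{\gamma^\vee_{X,F}} = \overline{\Gamma}_{\gamma^\vee_{X,\alpha}}$ and therefore $\mathfrak{X}_F = \mathfrak{X}_\alpha$, since $\mathfrak{X}_F$ is built solely from $\overline{\Gamma}_{\gamma^\vee_{X,F}}$. Proposition \ref{prop: projectiveMLcorr}, applied to $F$ and to $\alpha$, then gives $\mathrm{MLD}_\alpha(X) = \deg(\pi_{\vectorspace^\ast}\colon \mathfrak{X}_\alpha \dashrightarrow \vectorspace^\ast) = \deg(\pi_{\vectorspace^\ast}\colon \mathfrak{X}_F \dashrightarrow \vectorspace^\ast) = \mathrm{MLD}_F(X) = 1$.

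Next I would prove that $F/\alpha^{\deg F}$ is constant on $X$. From the equality of the two Gauss maps, at a generic $p \in C_X$ the line $\gamma^\vee_{X,F}(p)$ contains $[\nabla_p F]$, $[\nabla_p h]$ and $[\alpha]$, so $\nabla_p F = a(p)\nabla_p h + b(p)\alpha$ for scalars $a(p), b(p)$. Contracting with $p$ and using Euler's homogeneous function theorem together with $h(p) = 0$ yields $\deg(F)F(p) = b(p)\alpha(p)$. Substituting $b(p) = \deg(F)F(p)/\alpha(p)$ into $d_p\!\left(F/\alpha^{\deg F}\right)$ and cancelling, one finds that $d_p\!\left(F/\alpha^{\deg F}\right)$ is a scalar multiple of $\nabla_p h$, hence lies in $N^\ast_p C_X$. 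Thus $F/\alpha^{\deg F}$ has vanishing differential along the irreducible cone $C_X$ and so equals a constant $c$ there. The standing assumption $X \not\subseteq \mathrm{div}(F)$, and the fact that the non-linear hypersurface $X$ is not contained in the hyperplane $V(\alpha)$, force $c \in \CC^\ast$, and $F - c\,\alpha^{\deg F} \in \mathcal{I}(X)$.

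For the identity of the $\Phi$'s I would invoke \cite{ML1}: since $\mathrm{MLD}_F(X) = \mathrm{MLD}_\alpha(X) = 1$, the solutions $\Phi_{X,\vectorspace,F}$ and $\Phi_{X,\vectorspace,\alpha}$ of the corresponding homaloidal PDEs exist, with $\mathrm{MLE}_F = -\nabla\log\Phi_{X,\vectorspace,F}$, $\Phi_{X,\vectorspace,F} = F\circ\mathrm{MLE}_F$, and analogously for $\alpha$; moreover both $\mathrm{MLE}$ maps land in $C_X$. The birational maps between $\overline{\incidence}$ and $\mathfrak{X}_F$ from the proof of Proposition \ref{prop: projectiveMLcorr}, applied for $F$ and for $\alpha$ and combined with $\mathfrak{X}_F = \mathfrak{X}_\alpha$, give $\mathrm{MLE}_F(\bu) = \deg(F)\,\mathrm{MLE}_\alpha(\bu)$ (the two $\mathrm{MLE}$'s have the same projectivization, and the factor $\deg(F)$ is pinned down by $\bu(\mathrm{MLE}_F(\bu)) = \deg(F)$ versus $\bu(\mathrm{MLE}_\alpha(\bu)) = 1$, cf.\ Lemma \ref{lem: uxdegf}). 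Plugging this into $\Phi_{X,\vectorspace,F} = F\circ\mathrm{MLE}_F$, using homogeneity of $F$, then $F|_{C_X} = c\,\alpha^{\deg F}|_{C_X}$ and $\Phi_{X,\vectorspace,\alpha} = \alpha\circ\mathrm{MLE}_\alpha$, yields $\Phi_{X,\vectorspace,F} = c\,(\deg(F)\,\Phi_{X,\vectorspace,\alpha})^{\deg F}$, so the middle quotient in the statement equals $c = F/\alpha^{\deg F}|_X$. (Equivalently, one checks that $c\,(\deg(F)\,\Phi_{X,\vectorspace,\alpha})^{\deg F}$ solves the $F$-homaloidal PDE and appeals to its uniqueness.) Finally, replacing $\alpha$ by any fixed $\deg(F)$-th root $c^{1/\deg F}\alpha$ leaves $[\alpha] \in \PP\vectorspace^\ast$ and hence all hypotheses unchanged, while renormalizing the constant to $1$; then $F - \alpha^{\deg F} \in \mathcal{I}(X)$.

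The main obstacle is the first step: making $\gamma^\vee_{X,F} = \gamma^\vee_{X,\alpha}$ rigorous as an identity of rational maps, in particular handling the locus where $\nabla_pF$ is proportional to $\nabla_p h$ (so the join defining $\gamma^\vee_{X,F}(p)$ degenerates to a point) and checking that at a generic point of the domain of definition the value really is a line through both $[\nabla_p h]$ and $[\alpha]$. A secondary point requiring care is quoting the precise normalization from \cite{ML1} relating $\Phi$, $\mathrm{MLE}$ and $F$ that is used in the third step; if it holds only up to a scalar, that scalar must be tracked through the computation.
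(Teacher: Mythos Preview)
Your proof is correct and follows the same overall strategy as the paper: identify $\gamma^\vee_{X,F}$ with $\gamma^\vee_{X,\alpha}$, deduce $\mathrm{MLD}_\alpha(X)=1$, and obtain $\mathrm{MLE}_F=\deg(F)\cdot\mathrm{MLE}_\alpha$ via the pairing with $\bu$ (Lemma \ref{lem: uxdegf}). The paper argues on the primal side ($\gamma_{X,F}\subset\gamma_{X,\alpha}$ and equidimensional, hence equal) and then invokes Theorem \ref{thm: F-adjoineddegree}; you argue on the dual side by identifying both values with the line through $[\nabla_p h]$ and $[\alpha]$ and then invoke Proposition \ref{prop: projectiveMLcorr} via $\mathfrak{X}_F=\mathfrak{X}_\alpha$. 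These are equivalent.

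The one genuine difference is the order in which the two constancy statements are established. The paper first shows $\Phi_{X,\vectorspace,F}/\Phi_{X,\vectorspace,\alpha}^{\deg F}$ is constant (integrating $\nabla\log\Phi_F=\deg(F)\nabla\log\Phi_\alpha$) and only then evaluates at $p=-\nabla\log\Phi$ to obtain that $F/\alpha^{\deg F}$ is constant on $X$. You instead prove $F/\alpha^{\deg F}\big|_X=c$ directly from the linear dependence $\nabla_pF\in\mathrm{span}(\nabla_ph,\alpha)$ by computing $d_p(F/\alpha^{\deg F})\in N^\ast_pC_X$, and then feed this into $\Phi_F=F\circ\mathrm{MLE}_F$ to get the $\Phi$-identity. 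Your route makes the constancy of $F/\alpha^{\deg F}$ on $X$ an elementary calculus fact independent of the homaloidal PDE machinery, which is a nice simplification for that part; the paper's route, on the other hand, gets the $\Phi$-ratio constant without needing the explicit $a(p),b(p)$ decomposition. The concerns you flag at the end (genericity of the line value of $\gamma^\vee_{X,F}$, and the normalization $\Phi=F\circ\mathrm{MLE}$) are real but minor: the former is guaranteed by the hypothesis that the image closure lives in $\mathrm{Gr}(1,\PP\vectorspace^\ast)$, and the latter is exactly the content of the homaloidal PDE quoted in the paper's introduction.
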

\begin{proof}
Note that $\gamma^\vee_{X,F}(x) \ni [\alpha] \iff \gamma_{X,F}(x) \subset [\alpha]^\vee \coloneqq H$. Since $X$ is a hypersurface and $\alpha = \nabla \alpha$, because it is linear, the $\alpha$-adjoined Gauss map is given by
    \begin{align*}
    \gamma_{X,\alpha} = \TT_p X\cap H.
\end{align*}
By construction $\gamma_{X,F}  \subset \gamma_{X,\alpha} $ and they are equidimensional. Hence
$$
\gamma_{X,F}  =\gamma_{X,\alpha}.
$$
This means that $1 = \mathrm{MLD}_F(X) = \mathrm{MLD}_\alpha(X)$ by Theorem \ref{thm: F-adjoineddegree}. We now use \cite[Theorem~3.5]{ML1}. When $\MLD = 1$ there exists an inverse to the $F$-adjoined Gauss map and it determines the $\mathrm{MLE}$ up to scaling. This means that there exists a rational function $\lambda(\bu)$ such that
\begin{align*}
    \mathrm{MLE}_{X, \vectorspace, F} (\bu) &= \lambda(\bu) \cdot  \mathrm{MLE}_{X, \vectorspace, \alpha} (\bu) \iff \\
    - \nabla_{\bu} \log \Phi_{X, \vectorspace, F} &=  -\lambda(\bu) \nabla_{\bu} \log \Phi_{X, \vectorspace, \alpha}
\end{align*}
We may pair both sides of this equation with $\bu$ and use Eulers' homogeneous function theorem $\bu(\nabla_{\bu} \Phi) = \deg(\Phi)$ (see \cite[Theorem~3.1]{ML1} for details) to deduce that
$$
\deg(F) = \lambda(\bu)\cdot 1
$$
because the solutions to the homaloidal PDE with respect to a polynomial $F$ have degree equal to $-\deg(F)$. From this, we deduce that
$$
- \nabla_{\bu} \log \frac{\Phi_{X, \vectorspace, F}}{  \Phi_{X,\vectorspace, \alpha}^{\deg(F)}} = 0
$$
which implies that $\frac{\Phi_{X,\vectorspace, F}}{ (\Phi_{X,\vectorspace, \alpha})^{\deg(F)}}$ is a constant function. Lastly, considering the rational function $F/(\alpha^{\deg(F)})$ we can evaluate this at a generic point $p$ in the affine cone over $X$, written on the form 
$$
p = -\nabla_{\bu} \log \Phi_{X, \vectorspace, F} = -\deg(F) \nabla_{\bu} \log \Phi_{X, \vectorspace, \alpha}.
$$ 
By applying the homaloidal PDE to the numerator and denominator we get
$$
\frac{F(p)}{\alpha(p)^{\deg(F)}} =\frac{\Phi_{X,\vectorspace, F}}{ (\deg{F}\Phi_{X,\vectorspace, \alpha})^{\deg(F)}}
$$ 
which is constant. 
\end{proof}

The following theorem is essential for the classification of curves with rational MLE.

\begin{theorem}\cite[Theorem~6.2]{ML1} 
\label{thm: linsolution}
    Let $\vectorspace$ be a
finite-dimensional
$\CC$-vector space and $\alpha \in\vectorspace^\ast$. The solutions to the homaloidal PDE with respect to $F=\alpha$ 
$$
\Phi = \alpha(- \nabla \log \Phi)
$$
are in bijection with the varieties $X \subset \PP \vectorspace$ such that the dual variety is a hypersurface $X^\vee = V(g) \subset \PP \vectorspace^\ast$ and $\alpha$ represents a point $[\alpha] \in X^\vee$ of multiplicity $\deg(g) -1$. The solution corresponding to $X$ is given by 
$$
\Phi_{X, \vectorspace, \alpha} = \alpha( \nabla \log g). 
$$
\end{theorem}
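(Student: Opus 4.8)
The plan is to split the claim into (1) an intrinsic description of which varieties $X$ satisfy $\mathrm{MLD}_\alpha(X)=1$ and (2) an explicit formula for the homaloidal solution attached to such an $X$, and then combine both with the bijection of \cite[Theorem~3.5]{ML1} quoted above: between $\{X:\mathrm{MLD}_\alpha(X)=1\}$ and homaloidal solutions for $F=\alpha$, under which the $\mathrm{MLE}$ of $X$ is $\bu\mapsto[-\nabla_\bu\log\Phi_{X,\vectorspace,\alpha}]$, with image dense in $X$. Two remarks make the bookkeeping painless: homaloidal solutions carry no scalar ambiguity, since $\nabla\log(c\Phi)=\nabla\log\Phi$ forces $c=1$; and $\alpha(\nabla\log g)$ is insensitive to rescaling $g$, so the candidate bijection $X\leftrightarrow\alpha(\nabla\log g)$ is well defined.

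For (1) I would refine the proof of Lemma~\ref{lem: lindegree}. A point $x\in\Co$ is a critical point of $\ell_{\alpha,\bu}$ precisely when $\alpha-\alpha(x)\bu$ is conormal to $C_X$ at $x$, i.e.\ when $[\alpha-\alpha(x)\bu]$ is a hyperplane tangent to $X$ at $x$; as $x$ varies this point moves along the line $[\alpha]+[\bu]\subset\PP\vectorspace^\ast$. For generic $\bu$, biduality \cite[Theorem~1.1]{GKZ94} puts smooth points $H$ of $X^\vee$ on that line in bijection with their unique tangency points, and after rescaling each such $x$ lies in $\Co$ unless $\alpha(x)=0$, which happens exactly when $H=[\alpha]$. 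Hence $\mathrm{MLD}_\alpha(X)$ is the number of points of $X^\vee$ lying on a generic line through $[\alpha]$, not counting $[\alpha]$. If $X^\vee$ has codimension $\geq 2$ a generic line through $[\alpha]$ meets $X^\vee$ at most in $[\alpha]$, so $\mathrm{MLD}_\alpha(X)=0$; if $X^\vee=V(g)$ is a hypersurface the count is $\deg(g)-\mathrm{mult}_{[\alpha]}(X^\vee)$ by Bézout and genericity of the line. Therefore $\mathrm{MLD}_\alpha(X)=1$ iff $X^\vee=V(g)$ is a hypersurface with $\mathrm{mult}_{[\alpha]}(X^\vee)=\deg(g)-1$.

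For (2), fix such an $X$, write $X^\vee=V(g)$, and let $D_\alpha$ be the directional derivative $\bu\mapsto\alpha(\nabla_\bu\,\cdot\,)$ along $\alpha$. Placing $[\alpha]$ at a coordinate point, the condition $\mathrm{mult}_{[\alpha]}(V(g))=\deg(g)-1$ is exactly that $g$ is affine-linear in the $\alpha$-direction, i.e.\ $D_\alpha^2g=0$; moreover $D_\alpha g\not\equiv 0$, since $g$ is irreducible and $V(g)$ is not a cone with vertex $[\alpha]$. Put $\Psi:=\alpha(\nabla\log g)=D_\alpha g/g$. A short computation gives $-\alpha(\nabla\log\Psi)=\Psi-D_\alpha^2g/D_\alpha g=\Psi$, so $\Psi$ is a homaloidal solution for $F=\alpha$. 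To confirm it is the one attached to $X$, compute
\[
-\nabla_\bu\log\Psi=\frac{(D_\alpha g)(\bu)\,\nabla_\bu g-g(\bu)\,\nabla_\bu(D_\alpha g)}{g(\bu)\,(D_\alpha g)(\bu)},
\]
and use $g(\bu+t\alpha)=g(\bu)+t\,(D_\alpha g)(\bu)$ (a consequence of $D_\alpha^2g=0$) together with its $\bu$-derivative $(\nabla g)(\bu+t\alpha)=\nabla_\bu g+t\,\nabla_\bu(D_\alpha g)$ to identify the right-hand side as proportional to $\nabla_w g$, where $w:=\bu-\tfrac{g(\bu)}{(D_\alpha g)(\bu)}\alpha$ lies on $V(g)$. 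Since the Gauss map $w\mapsto[\nabla_w g]$ of $V(g)$ is dominant onto $(V(g))^\vee=X$ by biduality and $\bu\mapsto w$ is dominant onto $V(g)$, the rational map $\bu\mapsto[-\nabla_\bu\log\Psi]$ is dominant onto $X$; hence $\Phi_{X,\vectorspace,\alpha}=\Psi=\alpha(\nabla\log g)$. Reading the argument backwards closes the bijection: every homaloidal solution $\Phi$ comes from some $X$ with $\mathrm{MLD}_\alpha(X)=1$, to which (1) applies, and (2) then shows $\Phi=\alpha(\nabla\log g)$ with $X^\vee=V(g)$.

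The main obstacle is upgrading the ``$\leq$'' of Lemma~\ref{lem: lindegree} to the equality $\mathrm{MLD}_\alpha(X)=\deg(g)-\mathrm{mult}_{[\alpha]}(X^\vee)$: one needs that, for generic $\bu$, the point $[\alpha]\in X^\vee$ contributes no critical point (any critical point has $\alpha(x)\neq 0$ by definition of $\Co$) and that each of the remaining $\deg(g)-\mathrm{mult}_{[\alpha]}(X^\vee)$ intersection points of $X^\vee$ with the line is reduced and lies in the locus where the conormal projection is unramified, so that biduality returns a single $x$; in the dual-defective case one also needs the dimension count that a generic line through $[\alpha]$ cannot meet a variety of codimension $\geq 2$ elsewhere than $[\alpha]$.
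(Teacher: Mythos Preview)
The paper does not prove this theorem; it is quoted verbatim from \cite[Theorem~6.2]{ML1} and the only hint the paper gives toward a proof is Remark~\ref{rem: multiplicity-point}, which records the characterization $\mathrm{mult}_{[\alpha]}(V(g))=\deg(g)-1 \iff D_\alpha^2 g=0$. So there is no in-paper proof to compare against.

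That said, your argument is sound and in fact uses precisely the ingredient the paper singles out. Part (1) is a correct sharpening of Lemma~\ref{lem: lindegree}: the map $x\mapsto[\alpha-\alpha(x)\bu]$ sends critical points to tangent hyperplanes on the pencil $[\alpha]+[\bu]$, never hitting $[\alpha]$ since $\alpha(x)\neq 0$ on $\Co$, and the inverse direction via biduality gives the count $\deg(g)-\mathrm{mult}_{[\alpha]}(X^\vee)$ once the genericity checks you flag are done (these are routine: the smooth locus of $X^\vee$ is dense, a generic pencil through $[\alpha]$ avoids $(X^\vee)_{\mathrm{sing}}\setminus\{[\alpha]\}$ and the ramification locus of the conormal projection, and meets $X^\vee$ transversely away from $[\alpha]$). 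The dual-defective dimension count is also fine.

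Part (2) is a clean direct verification. The identity $-D_\alpha\log\Psi=\Psi$ from $D_\alpha^2 g=0$ is immediate, and your identification of $-\nabla_\bu\log\Psi$ with a scalar multiple of $\nabla_w g$ at the point $w=\bu-\tfrac{g(\bu)}{(D_\alpha g)(\bu)}\alpha\in V(g)$ is the decisive computation; it shows the MLE lands in $(V(g))^\vee=X$ and is dominant, which pins down $\Psi$ as the homaloidal solution attached to $X$ under \cite[Theorem~3.5]{ML1}. This is presumably very close to what \cite{ML1} does, and it matches the spirit of Remark~\ref{rem: multiplicity-point}.
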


\begin{remark}
\label{rem: multiplicity-point}
    A convenient way of determining if $\ell$ is a point of multiplicity $\deg(g) -1$ on $V(g)$ is to verify that 
    $$
\frac{d}{dt^2} g(u + t\ell) = 0
    $$
This is an important part of the proof of the above theorem. For example the cuspidal cubic $g(u_0,u_1,u_2) = 4u_0^3-54u_0u_1^2+27u_1^2u_2 $ is linear in $u_2$, meaning that $(0:0:1)$ is point of multpilicty $3-1=2$. In general we can always perform a linear change of coordinates such that $\ell = (0: \ldots:0:1)$ and then we need only check that $g$ is linear in the last variable. 
\end{remark}

\begin{proof}[Proof of Theorem \ref{thm: finalcurve}]
The implication right-to-left is an immediate consequence of Remark \ref{rem: euler-char} because $U(F) = U(\alpha)$ combined with Theorem \ref{thm: linsolution} implies $\MLD = \mathrm{MLD}_\alpha(X) = 1$. For the other direction we note that 
$$
[\bu] \in \gamma_{X,F}^\vee(x) \iff \gamma_{X,F}(x) \in [\bu]^\vee,
$$
thus the order of $\overline{\mathrm{Im}(\gamma_{X,F}^\vee)}$ is the degree of the curve $\overline{\mathrm{Im}(\gamma_{X,F})} \subset \mathrm{Gr}(0, \PP\vectorspace) = \PP\vectorspace$ which is $1$, i.e., it is a line. If $X \subset \PP\vectorspace \cong \PP^2$ then $\overline{\mathrm{Im}(\gamma_{X,F})} = [\alpha]^\vee$ for some $\alpha \in \vectorspace^\ast$ and the statement follows from applying Proposition \ref{prop: subdualsML1} and Theorem \ref{thm: linsolution}. If the ambient projective space has higher dimension we know from Lemma \ref{lem: ML1bounddimension} that $X$ has a 2-dimensional projective linear span $\PP\secondvectorspace$ such that $X \subset \PP\mathcal{W} \subsetneq \PP\vectorspace$. We may apply Proposition \ref{prop: subdualsML1} and Theorem \ref{thm: linsolution} to $X \subset \PP\secondvectorspace$ and $F|_\secondvectorspace$ because $\MLD = \mathrm{MLD}_{F|_\secondvectorspace}(X)$ by Remark \ref{rem: euler-char}. This provides us with $\Tilde{\alpha} \in \secondvectorspace^\ast$ and a solution to the homaloidal PDE on $\secondvectorspace^\ast$,
$$
\Phi_{X, \secondvectorspace, F|_\secondvectorspace} = (\deg(F)\cdot \Phi_{X, \secondvectorspace, \Tilde{\alpha}})^{\deg(F)}.
$$
We now extend the solution $\Phi_{X, \secondvectorspace, F|_\secondvectorspace}$ to a solution $\Phi_{X, \vectorspace, F}$. Consider the two different embeddings $X \subset \PP\secondvectorspace \subset \PP\vectorspace$ and their corresponding dual varieties: $X_\secondvectorspace^\vee = V(\Tilde{g}) \subset \PP \secondvectorspace^\ast$ and $X_\vectorspace^\vee = V(g) \subset \PP \vectorspace^\ast$. Let $\pi \colon \vectorspace^\ast \to \secondvectorspace^\ast$ denote the dual map of the embedding $\iota: \secondvectorspace \to \vectorspace$. Notice that we may choose $g = \Tilde{g} \circ \pi$ and $\alpha$ such that $\Tilde{\alpha} = \alpha \circ \iota = \alpha|_\secondvectorspace$. We see that $(F - \alpha^{\deg(F)})|_X = (F|_{\secondvectorspace} - \Tilde{\alpha}^{\deg(F)})|_X = 0$. By an appropriate choice of basis of $\vectorspace$ and applying Remark \ref{rem: multiplicity-point} we deduce that $[\alpha]$ is a point on $X_\vectorspace^\vee$ of multiplicity $\deg(X_\vectorspace^\vee)-1$ because $[\Tilde{\alpha}]$ is for $X_{\secondvectorspace}^\vee$. This concludes the proof. The curious reader can verify that $\Phi_{X, \vectorspace, F}$ has the desired form by \cite[Proposition~2.8]{ML1},
$$
\Phi_{X,\vectorspace, F} = \Phi_{X, \secondvectorspace, F|_\secondvectorspace} \circ \pi.
$$
By the chain rule and Theorem \ref{thm: linsolution}
\begin{align*}
    \alpha(\nabla_{\bu} \log g ) =& \alpha(\nabla_{\bu} \log (\Tilde{g} \circ \pi)) = \alpha( (\nabla_{\pi(\bu)} \log \Tilde{g}) \circ \pi) = \alpha( \iota(\nabla_{\pi(\bu)} \log \Tilde{g}) ) \\
    =& \Tilde{\alpha}(\nabla_{\pi(\bu)} \log \Tilde{g} ) = (\Phi_{X, \secondvectorspace, F|_\mathcal{W}} \circ \pi)(\bu).
\end{align*}
Thus $ \Phi_{X,\vectorspace, F}(\bu)$ is obtained by taking a power of $\alpha(\nabla_{\bu} \log g )$.
\end{proof}

\begin{corollary}
    \label{cor: allS2ML1}
    Let $X\subset \PP\Sym^2$ be a curve of degree $>1$. Then 
    $$
    \mathrm{MLD}_{\det}(X) = 1 \iff X = V(\det(K) - \mathrm{trace}(AK)^2)  
    $$
    where $A = (\begin{smallmatrix}
        a_{11} & a_{12} \\
        a_{12} & a_{22}
    \end{smallmatrix})$ is any symmetric matrix of rank $1$. 
    The corresponding solution to the homaloidal PDE is given by 
    $$
\Phi_{X, \Sym^2, \det}(S) = 4\left(\frac{a_{22}s_{11} + a_{11}s_{22}- 2a_{12}s_{12}}{(s_{11}s_{22} - s_{12}^2) + (a_{22}s_{11} + a_{11}s_{22}- 2a_{12}s_{12})^2 }\right)^2
    $$
\end{corollary}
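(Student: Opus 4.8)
The plan is to specialize Theorem~\ref{thm: finalcurve} to $\vectorspace=\Sym^2$, $F=\det$, $\deg(F)=2$, using $\PP\Sym^2\cong\PP^2$. Since $X$ has degree $>1$ it is not contained in a line, so its linear span is automatically two-dimensional, and Theorem~\ref{thm: finalcurve} says $\mathrm{MLD}_{\det}(X)=1$ if and only if there is an $\alpha\in\Sym^{2\ast}$ with $\det-\alpha^{2}\in\mathcal{I}(X)$ and $\mathrm{mult}_{[\alpha]}(X^\vee)=\deg(X^\vee)-1$. First I would unwind the ideal membership: $\det-\alpha^{2}$ is a nonzero quadratic form ($\det$ is irreducible, hence not a perfect square), while $\mathcal{I}(X)=(h)$ with $h$ irreducible of degree $\deg(X)>1$; divisibility of $\det-\alpha^{2}$ by $h$ then forces $\deg(h)=2$ and $X=V(\det-\alpha^{2})$. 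Thus $X$ is an irreducible---hence smooth---plane conic, $X^\vee$ is a smooth conic of degree $2$, and the multiplicity hypothesis collapses to $[\alpha]\in X^\vee$; moreover $\alpha\neq 0$, since $\alpha=0$ would give $\det\in\mathcal{I}(X)$, i.e.\ $X\subseteq\mathrm{div}(\det)$, against the standing hypothesis on $X$. Writing $\alpha=\mathrm{trace}(A\,\bullet)$ via the trace pairing for a unique nonzero symmetric matrix $A$, the corollary becomes the assertion: $[\alpha]\in X^\vee$ if and only if $\mathrm{rank}(A)=1$.

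The core of the proof is this rank dichotomy, which I would settle by biduality. For the smooth conic $X=V(\det-\alpha^{2})$, $[\alpha]\in X^\vee$ means exactly that the hyperplane $[\alpha]^\vee\subset\PP\Sym^2$ is tangent to $X$ at some (necessarily smooth) point $[K_0]$, equivalently $\alpha(K_0)=0$ and $\nabla_{K_0}(\det-\alpha^{2})$ is proportional to $A$. From $[K_0]\in X$ and $\alpha(K_0)=0$ we get $\det(K_0)=\alpha(K_0)^{2}=0$, so $K_0=vv^\intercal$ is a nonzero symmetric rank-one matrix; using $\nabla_K\det=\mathrm{adj}(K)$ and $\nabla_K\alpha=A$ in the symmetric-gradient convention of Section~\ref{sec: Notation}, we get $\nabla_{K_0}(\det-\alpha^{2})=\mathrm{adj}(K_0)-2\alpha(K_0)A=\mathrm{adj}(vv^\intercal)$, again a nonzero symmetric rank-one matrix, so proportionality to $A$ forces $\mathrm{rank}(A)=1$. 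Conversely, if $A=ww^\intercal$ with $w\neq0$, I would exhibit the tangency: with $w^{\perp}$ a nonzero vector satisfying $w^\intercal w^{\perp}=0$ and $K_0=w^{\perp}(w^{\perp})^\intercal$, one has $\alpha(K_0)=(w^\intercal w^{\perp})^{2}=0$, hence $[K_0]\in X$, and $\nabla_{K_0}(\det-\alpha^{2})=\mathrm{adj}(w^{\perp}(w^{\perp})^\intercal)=ww^\intercal=A$, so $\TT_{K_0}X=[\alpha]^\vee$ and $[\alpha]\in X^\vee$. Finally, to confirm that the ``if'' direction genuinely yields a curve of degree $>1$, I would note that $\det-\mathrm{trace}(AK)^{2}$ is irreducible for rank-one $A$: after a transformation $K\mapsto PKP^\intercal$ and rescaling one reduces to $A=e_1e_1^\intercal$, for which this quadratic is the conic $\kappa_{11}\kappa_{22}-\kappa_{12}^{2}-\kappa_{11}^{2}$ of Section~\ref{sec: Notation}, whose $3\times3$ Gram matrix is nonsingular.

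It remains to extract the homaloidal solution. The second part of Theorem~\ref{thm: finalcurve}, with $\deg(F)=2$, gives $\Phi_{X,\Sym^2,\det}=4\bigl(\alpha(\nabla_S\log g)\bigr)^{2}$ where $X^\vee=V(g)$, so I need $g$. I would compute it by eliminating $[K]$ from the linear Gauss map $\gamma_X\colon[K]\mapsto[\mathrm{adj}(K)-2\,\mathrm{trace}(AK)\,A]$: setting $t=\mathrm{trace}(AK)$ and $S=\mathrm{adj}(K)-2tA$, the involution $\mathrm{adj}\circ\mathrm{adj}=\mathrm{id}$ on $2\times2$ matrices gives $K=\mathrm{adj}(S+2tA)$, and the polarization identity $\mathrm{trace}(A\,\mathrm{adj}(N))=\mathrm{trace}(\mathrm{adj}(A)N)$ together with $\det(A)=0$ yields $t=\mathrm{trace}(\mathrm{adj}(A)S)$ and $\det(K)=\det(S)+2t^{2}$; the defining equation $\det(K)=t^{2}$ of $X$ then becomes $\det(S)+\mathrm{trace}(\mathrm{adj}(A)S)^{2}=0$, so one may take $g(S)=\det(S)+\mathrm{trace}(\mathrm{adj}(A)S)^{2}=(s_{11}s_{22}-s_{12}^{2})+(a_{22}s_{11}+a_{11}s_{22}-2a_{12}s_{12})^{2}$. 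Then $\nabla_S g=\mathrm{adj}(S)+2\,\mathrm{trace}(\mathrm{adj}(A)S)\,\mathrm{adj}(A)$, and using the same identities and $\mathrm{trace}(A\,\mathrm{adj}(A))=2\det(A)=0$ one gets $\alpha(\nabla_S\log g)=\mathrm{trace}(A\,\nabla_S g)/g=\mathrm{trace}(\mathrm{adj}(A)S)/g$, whence $\Phi_{X,\Sym^2,\det}=4\bigl(\mathrm{trace}(\mathrm{adj}(A)S)/g\bigr)^{2}$, which is the stated formula. I expect the main nuisance to be the conventions---the trace pairing is not the dual-basis pairing and the symmetric gradient carries its own normalization---so some care is needed to arrive at exactly the stated normalization of $g$; the conceptual content is entirely in the rank-one tangency computation of the second step.
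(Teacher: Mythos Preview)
Your proof is correct and follows essentially the same route as the paper: both apply Theorem~\ref{thm: finalcurve} to force $X=V(\det-\alpha^2)$, then show $\mathrm{rank}(A)=1$ via the tangency of $[\alpha]^\vee$ (the paper phrases this through the scheme equality $X\cap V(\alpha)=V(\det)\cap V(\alpha)$, you through the gradient at the tangency point), and finally compute the dual conic and plug into the formula for $\Phi$. The only stylistic difference is that the paper verifies $g=\det(S)+\mathrm{trace}(\mathrm{adj}(A)S)^2$ by checking that $\nabla Q^\vee\circ\nabla Q=\mathrm{id}$, whereas you derive it directly by inverting the Gauss map; both use the same $2\times 2$ adjugate identities.
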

\begin{proof}
    We first argue that a nonlinear curve with rational MLE is of the proposed form. Theorem \ref{thm: finalcurve} ensures the existence of a linear form $\alpha(K) = \mathrm{trace}(AK)$ such that $Q = V(\det(K) - \mathrm{trace}(AK)^2)$ contains $X$ as a component. If $Q$ is not irreducible the components are linear but the curve $X$ is not, therefore $Q$ must be irreducible and $X=Q$. Notice that
    $$
    X \cap V(\mathrm{trace}(AK)) = V(\det(K),\mathrm{trace}(AK) ).
    $$  
    By Theorem \ref{thm: finalcurve} $\mathrm{mult}_{[\mathrm{trace}(AK)]}X^\vee = 1$, which means that it corresponds to a tangent line of $X$, therefore it is also tangent to $V(\det)$, i.e., $\mathrm{rank}(A)=1$. All curves of this form have rational MLE since they satisfy the conditions of Theorem \ref{thm: finalcurve}. We also provide the corresponding solution to the homaloidal PDE by computing the dual quadric of $X$. The dual quadric is in the denominator of $\Phi_{X, \Sym^2, \det}$, 
    $$
Q^\vee(S) = \det(S) + \mathrm{trace}(\mathrm{adj}(A)S)^2,
    $$
    where $\mathrm{adj}(A) = (\begin{smallmatrix}
        a_{22} & -a_{12} \\
        -a_{12} & a_{11}
    \end{smallmatrix})$. To see this it is enough to show that the gradients
    \begin{align*}
        \nabla_KQ &= \mathrm{adj}(K) - 2\mathrm{trace}(AK)A \\
        \nabla_SQ^\vee &= \mathrm{adj}(S) + 2\mathrm{trace}(\mathrm{adj}(A)S)\mathrm{adj}(A)
    \end{align*}
    are mutually inverse (up to scaling). The map $A \mapsto \mathrm{adj}(A)$ is linear and satisfies 
    \begin{align*}
        \mathrm{trace}(\mathrm{adj}(A)S) =& \mathrm{trace}(A\mathrm{adj}(S)) \\
        \mathrm{adj}(\mathrm{adj}(A)) =& A \\
        \mathrm{rank}(A)=1 \implies &\mathrm{trace}(\mathrm{adj}(A)A)=0.
    \end{align*} 
    We then compute 
\begin{align*}
       (\nabla  Q^\vee \circ \nabla Q)(K)   =& \mathrm{adj}\Big(\mathrm{adj}(K) - 2\mathrm{trace}(AK)A \Big) \\
       &+ 2\mathrm{trace}\Big(\mathrm{adj}(A)[\mathrm{adj}(K) - 2\mathrm{trace}(AK)A]\Big)\mathrm{adj}(A) \\
       =& K - 2\mathrm{trace}(AK)\mathrm{adj}(A)+ 2\mathrm{trace}(\mathrm{adj}(A)\mathrm{adj}(K))\mathrm{adj}(A) \\
       =&K.
    \end{align*}
    Theorem \ref{thm: linsolution} provides us with the solution 
    \begin{align*}
        \Phi_{X, \Sym^2, \det}(S) =& \mathrm{trace}\left(A \nabla_S\log Q^\vee \right)  \\
        =&4\left(\frac{\mathrm{trace}\left(A[\mathrm{adj}(S) + 2\mathrm{trace}(\mathrm{adj}(A)S)\mathrm{adj}(A)] \right)}{\det(S) + \mathrm{trace}(\mathrm{adj}(A)S)^2}\right)^2 \\
        =& 4\left(\frac{\mathrm{trace}(\mathrm{adj}(A)S)}{\det(S) + \mathrm{trace}(\mathrm{adj}(A)S)^2}\right)^2
    \end{align*}
\end{proof}

\begin{example}
\label{ex: 2x2ML1 quadric}
Consider the curve $X= V(\kappa_{11}\kappa_{22}-\kappa_{12}^2 - \kappa_{11}^2) \subset \PP \mathbb{S}^2$. We may apply Theorem \ref{thm: finalcurve} or Corollary \ref{cor: allS2ML1} using the linear form 
$$
\alpha(K) = \mathrm{trace}\left( \left(\begin{smallmatrix} 1 & 0 \\ 0 & 0 \end{smallmatrix}\right) \left(\begin{smallmatrix}\kappa_{11} & \kappa_{12} \\ \kappa_{12} & \kappa_{22} \end{smallmatrix}\right) \right) = \kappa_{11}
$$
which is corresponds to a tangent line of $X$, more specifically it is a smooth (multiplicity $1$) point of the dual variety:
$$
A = \left(\begin{smallmatrix} 1 & 0 \\ 0 & 0 \end{smallmatrix}\right) \in X^\vee = V(s_{11}s_{22} - s_{12}^2 + s_{22}^2).
$$
Theorem \ref{thm: finalcurve} and Corollary \ref{cor: allS2ML1} provide the correponding solution to the homaloidal PDE
 $$
 \Phi_{X,\mathbb{S}^2, \det}(\begin{smallmatrix} s_{11} & s_{12} \\ s_{12} & s_{22} \end{smallmatrix} ) = 4\left( \frac{s_{22}}{s_{11}s_{22} - s_{12}^2 + s_{22}^2}\right)^2.
 $$
We now verify that this is a solution to the homaloidal PDE with respect to the determinant and that $X$ is its associated variety. Recall that the pairing $\mathrm{trace}(AB)$ lets us identify the vector space of symmetric matrices with its dual vector space as long as we scale the off-diagonal entries by $\frac{1}{2}$. We may compute $(\nabla (\Phi_{X,\mathbb{S}^2, \det}))_{ij} = \frac{1}{2- \delta_{ij}} \frac{\partial \Phi_{X,\mathbb{S}^2, \det}}{\partial s_{ij}}$ where $\delta_{ij}$ is the Kronecker delta, 

\begin{align*}
    -\nabla_S \log (\Phi_{X,\mathbb{S}^2, \det}) = &\frac{2}{s_{11}s_{22} - s_{12}^2 + s_{22}^2}\left(\begin{smallmatrix} s_{22} & -s_{12} \\ -s_{12} & s_{11} + 2s_{22} \end{smallmatrix}\right) - \frac{2}{s_{22}}\left(\begin{smallmatrix} 0 & 0 \\ 0 & 1\end{smallmatrix}\right) \\
    =&\frac{2}{s_{22}(s_{11}s_{22} - s_{12}^2 + s_{22}^2)}\left(\begin{smallmatrix} s_{22}^2 & -s_{12}s_{22} \\ -s_{12}s_{22} &   s_{12}^2 +  s_{22}^2\end{smallmatrix}\right)  = \mathrm{MLE}(S).
\end{align*}
We verify that
\begin{align*}
    \det(-\nabla \log \Phi_{X, \mathbb{S}^2, \det}) =& 4\frac{ s_{22}^2(s_{12}^2 +  s_{22}^2) -  s_{12}^2s_{22}^2 }{s_{22}^2(s_{11}s_{22} - s_{12}^2 + s_{22}^2)^2}= 4\left( \frac{s_{22}}{s_{11}s_{22} - s_{12}^2 + s_{22}^2}\right)^2.
\end{align*}
The associated variety of this solution is the image of the $\mathrm{MLE} \colon \Sym^2 \dashrightarrow \Sym^2$,
$$
\mathrm{MLE} = -\nabla \log (\Phi_{X, \mathbb{S}^2, \det}) \propto \left(\begin{smallmatrix} s_{22}^2 & -s_{12}s_{22} \\ -s_{12}s_{22} &   s_{12}^2 +  s_{22}^2\end{smallmatrix}\right).
$$
From this we can deduce that the entries of the $\mathrm{MLE}$ map satisfy the equation
$$
\kappa_{11}\kappa_{22}-\kappa_{12}^2 - \kappa_{11}^2 = 0
$$
which defines the variety $X$. 
\end{example}

\begin{example}
\label{ex: cuspML1}
   Let $\vectorspace =\CC^3$ and consider $X = V\left(  y^2z-(x+2z)^3 \right)$. Recall that its dual variety
$$
X^\vee = V( 4u^3-54uv^2+27v^2w )
$$
is a cuspidal curve. The polynomial $g=4u^3-54uv^2+27v^2w$ is linear in $w$. By Remark \ref{rem: multiplicity-point} the point of multiplicity $2$ (the cusp) is $V(u,v) \in X^\vee$. This point $V(u,v)$ corresponds to the line $z=0$ that is tangent to $X$ at the inflection point $V(x,z) = (0:1:0) \in X$. By Theorem \ref{thm: linsolution} we may choose the linear form $\alpha(x,y,z) = 8z$ as a representative of this tangent line to obtain a solution
$$
\Phi_{X, \CC^3, 8z} = 8\frac{\partial}{\partial w} \log ( 4u^3-54uv^2+27v^2w ) = 8\frac{27v^2}{4u^3-54uv^2+27v^2w }.
$$
 to the homaloidal PDE with respect to $F = 8z$, i.e., 
 $$
\Phi = -8\frac{\partial}{\partial w} \log \Phi.
 $$
Now consider 
$$
F =y^2z -x^3 - 6x^2z - 12xz^2 +504z^2
$$
and note $F - (8z)^3 \in \mathcal{I}(X)$. Theorem \ref{thm: finalcurve} then implies that $\mathrm{MLD}_F(X)=1$ and
$$
\Phi_{X, \CC^3, F} = 3^3\Phi_{X, \vectorspace, 8z}^3.
$$
We also verify this,
\begin{align*} 
    F(-\nabla \log \Phi_{X, \CC^3, F}) = & F(- \nabla \log 3^3\Phi_{X, \vectorspace, 8z}^3) = 3^3 F(- \nabla \log \Phi_{X, \vectorspace, 8z}) \\
    \{ F|_X = (8z)^3 \} =& 3^3 (-8\frac{\partial}{\partial w}\log \Phi_{X, \vectorspace, 8z}))^3 = 3^3\Phi_{X, \vectorspace, 8z}^3 = \Phi_{X, \CC^3, F}.
\end{align*}
\end{example}

\begin{example}
Using Example \ref{ex: cuspML1} we construct a curve inside the space of $3 \times 3$ matrices $\vectorspace = \mathbb{S}^3$ that has maximum likelihood degree $1$. Let $\secondvectorspace$ be the sub-vector space spanned by the matrices
$$
A =\left(\begin{smallmatrix} 0 & 0 & 1 \\ 0&1&0 \\ 1& 0 & 0\end{smallmatrix}\right), B= \left(\begin{smallmatrix} 0 & 0 & 0 \\ 0&0&1 \\ 0& 1 & 0\end{smallmatrix}\right), C= \left(\begin{smallmatrix} -1 & 0 & 6 \\ 0&-6&0 \\ 6& 0 & 48\end{smallmatrix}\right).
$$
Let $F$ be the determinant and $X$ be a cuspidal curve inside of
$\secondvectorspace$ defined as the matrices $xA+yB+zC$ such that $  y^2z-(x+2z)^3 = 0$, or equivalently 
$$
X = {\scriptstyle V(4\kappa_{22}-4\kappa_{13}+\kappa_{33},\kappa_{12},48\kappa_{11}+\kappa_{33},1728\kappa_{13}^3-432\kappa_{13}^2\kappa_{33}-36\kappa_{23}^2\kappa_{33}+36\kappa_{13}\kappa_{33}^2-\kappa_{33}^3)}.
$$ 
Notice that 
$$
\det(xA+yB+zC) = F(x,y,z) = y^2z -x^3 - 6x^2z - 12xz^2 +504z^2.
$$
We observe that this the Example \ref{ex: cuspML1} in disguise. We use the explicit bijection $\secondvectorspace \cong \CC^3$ given by the matrices $A, B, C$. Applying \cite[Proposition~2.8]{ML1} proves that $X$ has maximum likelihood degree $1$ and we obtain the following factorization 
$$
\Phi_{X, \mathbb{S}^3, \det}(S) = \Phi_{X, \CC^3, F}^3 \circ \pi_{\CC^3}
$$
where $\pi_{\CC^3}$ is defined by restricting dual vectors to $\secondvectorspace$. More explicitly:
\begin{align*}
    \pi_{\CC^3} (S) =&  \mathrm{trace}\left( S(xA+yB+zC) \right)  \\ =& x \cdot \mathrm{trace}(SA)+y\cdot \mathrm{trace}(SB)+z \cdot \mathrm{trace}(SC) \\ 
    =& (2s_{13} + s_{22})x + 2s_{23}y + (-s_{11} - 6s_{22} + 48s_{33} + 12s_{13})z.
\end{align*}
The linear forms $x,y,z$ form the basis of $\secondvectorspace^\ast$ that is dual to the basis $A, B, C$ of $\secondvectorspace$. So we now substitute $u,v,w$ by $\mathrm{tr}(SA),\mathrm{tr}(SB), \mathrm{tr}(SC)$ into the known solution $\Phi_{X, \CC^3, F}$ from Example \ref{ex: cuspML1} to obtain the solution, 

\begin{align*}
    \Phi_{X, \mathbb{S}^3, \det}  =  {\scriptstyle 2^93^3\left(\frac{27(2s_{23})^2}{4(2s_{13} + s_{22})^3-54(2s_{13} + s_{22})(2s_{23})^2+27(2s_{23})^2(-s_{11} - 6s_{22} + 48s_{33} + 12s_{13}) } \right)^3}.
\end{align*}

This solution can be verified using the code provided in \cite{ML1} at \url{https://mathrepo.mis.mpg.de/GaussianMLDeg1}.

\end{example}

\section{A family of maximum likelihood degree 1 surfaces}
\label{sec: surfaceML1study}

Theorem \ref{thm: finalpolarformula} suggests a suitable class of varieties that are not curves and that might have rational MLE. These are the dual varieties of curves, as they have a small number of nonzero polar degrees. We explore this idea for surfaces in $\PP^3$ in this section. The easiest cases of surfaces that are dual to a plane curve. These are addressed in the following Lemma.

\begin{lemma}
\label{lem: dual-planecurve}
    Suppose $X \subset \PP \vectorspace$, $Y \subset \PP \secondvectorspace$ and $\MLD = \mathrm{MLD}_G(Y) =1 $. Then consider the join $Z_{X,Y} \subset \PP(\vectorspace \oplus \secondvectorspace)$, cut out by the equations of both $X$ and $Y$. Then $\mathrm{MLD}_{F \cdot G}(Z_{XY}) = 1$ and 
    $$
    \mathrm{MLE}_{Z_{XY},\vectorspace \oplus \secondvectorspace, F\cdot G} = \mathrm{MLE}_{X,\vectorspace, F} \oplus \mathrm{MLE}_{Y,\secondvectorspace , G}
    $$
    or equivalently 
    $$
    \Phi_{Z_{XY},\vectorspace \oplus \secondvectorspace, F\cdot G}(\bu_\vectorspace, \bu_\secondvectorspace) = \Phi_{X,\vectorspace, F}(\bu_\vectorspace) \cdot \Phi_{Y,\secondvectorspace, G}(\bu_\secondvectorspace).
    $$
\end{lemma}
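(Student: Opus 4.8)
The plan is to use that the affine cone over the join $Z_{XY}$ is literally the product $C_X\times C_Y\subset\vectorspace\oplus\secondvectorspace$, so that the log-likelihood with respect to $F\cdot G$ splits as a sum of the two individual log-likelihoods, one in the $\vectorspace$-variables and one in the $\secondvectorspace$-variables. First I would pin down the relevant loci: since $X$ and $Y$ are irreducible, so is $C_X\times C_Y$, hence so is $Z_{XY}$; the smooth locus of a product is the product of the smooth loci (the tangent space at $(x,y)$ is $T_xC_X\oplus T_yC_Y$, which has the expected dimension exactly when both factors are smooth); and since $(F\cdot G)(x,y)=F(x)G(y)$ we have $\mathrm{div}(F\cdot G)\cap(C_X\times C_Y)=\big(\mathrm{div}(F)\cap C_X\big)\times C_Y\ \cup\ C_X\times\big(\mathrm{div}(G)\cap C_Y\big)$. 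Combining these, $(C_{Z_{XY}})_{\mathrm{reg}}\setminus\mathrm{div}(F\cdot G)=\Co\times C_Y^{\circ}$, where $C_Y^{\circ}=(C_Y)_{\mathrm{reg}}\setminus\mathrm{div}(G)$.

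The key step is the separation of variables: for data $(\bu_\vectorspace,\bu_\secondvectorspace)\in\vectorspace^\ast\oplus\secondvectorspace^\ast$,
\[
\ell_{F\cdot G,\,(\bu_\vectorspace,\bu_\secondvectorspace)}(x,y)=\log\big(F(x)G(y)\big)-\bu_\vectorspace(x)-\bu_\secondvectorspace(y)=\ell_{F,\bu_\vectorspace}(x)+\ell_{G,\bu_\secondvectorspace}(y).
\]
Since the differential at $(x,y)$ is $\big(d_x\ell_{F,\bu_\vectorspace},\,d_y\ell_{G,\bu_\secondvectorspace}\big)$ and $N^\ast_{(x,y)}(C_X\times C_Y)=N^\ast_xC_X\oplus N^\ast_yC_Y$, a point $(x,y)$ is a smooth critical point of $\ell_{F\cdot G}$ on $\Co\times C_Y^\circ$ exactly when $x$ is a smooth critical point of $\ell_{F,\bu_\vectorspace}|_{\Co}$ and $y$ is a smooth critical point of $\ell_{G,\bu_\secondvectorspace}|_{C_Y^\circ}$. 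Equivalently --- running the fibrewise description in Lemma~\ref{lem: Xf dim} with $\nabla_{(x,y)}(F\cdot G)/(F\cdot G)(x,y)=\big(\nabla_xF/F(x),\,\nabla_yG/G(y)\big)$ --- the maximum likelihood correspondence of $Z_{XY}$ is the product $\incidence\times\xi_G$ and its projection to $\vectorspace^\ast\oplus\secondvectorspace^\ast$ is $\pi_{\vectorspace^\ast}\times\pi_{\secondvectorspace^\ast}$, so $\mathrm{MLD}_{F\cdot G}(Z_{XY})=\MLD\cdot\mathrm{MLD}_G(Y)=1$. For generic $(\bu_\vectorspace,\bu_\secondvectorspace)$ --- which makes $\bu_\vectorspace$ and $\bu_\secondvectorspace$ generic in their respective factors, the two ML-generic loci being dense opens --- the unique critical point is $\big(\mathrm{MLE}_{X,\vectorspace,F}(\bu_\vectorspace),\,\mathrm{MLE}_{Y,\secondvectorspace,G}(\bu_\secondvectorspace)\big)$, which is the asserted formula for $\mathrm{MLE}_{Z_{XY},\vectorspace\oplus\secondvectorspace,F\cdot G}$.

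For the formula for $\Phi$ I would use $\mathrm{MLE}=-\nabla\log\Phi$. From the previous step,
\[
-\nabla\log\Phi_{Z_{XY},\vectorspace\oplus\secondvectorspace,F\cdot G}=\big(-\nabla_{\bu_\vectorspace}\log\Phi_{X,\vectorspace,F},\,-\nabla_{\bu_\secondvectorspace}\log\Phi_{Y,\secondvectorspace,G}\big)=-\nabla\log\big(\Phi_{X,\vectorspace,F}\cdot\Phi_{Y,\secondvectorspace,G}\big),
\]
so $\Phi_{Z_{XY},\vectorspace\oplus\secondvectorspace,F\cdot G}$ and $\Phi_{X,\vectorspace,F}\cdot\Phi_{Y,\secondvectorspace,G}$ differ by a multiplicative constant. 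Both satisfy the homaloidal PDE for $F\cdot G$: for the product this is immediate from the PDEs of $X$ and $Y$, since $(F\cdot G)\big(-\nabla\log(\Phi_{X,\vectorspace,F}\Phi_{Y,\secondvectorspace,G})\big)=F\big(-\nabla_{\bu_\vectorspace}\log\Phi_{X,\vectorspace,F}\big)\,G\big(-\nabla_{\bu_\secondvectorspace}\log\Phi_{Y,\secondvectorspace,G}\big)=\Phi_{X,\vectorspace,F}\,\Phi_{Y,\secondvectorspace,G}$; and rescaling a solution $\Phi\mapsto c\Phi$ changes neither $-\nabla\log\Phi$ nor, hence, the right-hand side of the PDE, so the constant must be $1$. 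The argument is essentially routine once the separation of variables is seen; the only points that need care are the identification $(C_{Z_{XY}})_{\mathrm{reg}}\setminus\mathrm{div}(F\cdot G)=\Co\times C_Y^\circ$, which is what makes criticality genuinely decouple, and the remark that a generic datum in $\vectorspace^\ast\oplus\secondvectorspace^\ast$ lies over the ML-generic locus of each factor, which is what lets us multiply the two ML degrees.
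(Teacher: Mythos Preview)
Your proof is correct and rests on the same separation-of-variables observation as the paper, but you carry it out from the critical-point side rather than from the homaloidal PDE. The paper's proof is a single line: plug the proposed product $\Phi_{X,\vectorspace,F}\cdot\Phi_{Y,\secondvectorspace,G}$ into the homaloidal PDE for $F\cdot G$ and note it splits because $F$ depends only on the $\vectorspace$-variables and $G$ only on the $\secondvectorspace$-variables; the bijection between solutions and $\mathrm{MLD}=1$ varieties (cited from \cite{ML1}) then finishes. You instead identify $C_{Z_{XY}}^\circ=\Co\times C_Y^\circ$, split $\ell_{F\cdot G}$ as a sum, and read off $\mathrm{MLD}_{F\cdot G}(Z_{XY})=\MLD\cdot\mathrm{MLD}_G(Y)$ directly from Definition~\ref{def: formal-ML-deg}, only invoking the PDE at the end to pin down the constant in $\Phi$. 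Your route is longer but more self-contained: it does not need the external correspondence between PDE solutions and $\mathrm{MLD}=1$ varieties, and it makes explicit why the associated variety of the product solution is actually $Z_{XY}$ (a point the paper's one-liner leaves implicit).
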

\begin{proof}
This is an immediate consequence of plugging the suggested solution into the homaloidal PDE because it splits as $F$ does not depend on the entry in $\secondvectorspace$ and $G$ not on the entry in $\vectorspace$. 
\end{proof}

\begin{example}
\label{ex: dual-to-planecurve}
    Consider the singular quadratic surface $Q = V(\kappa_{11}\kappa_{22}- \kappa_{12}^2 - \kappa_{11}^2, \kappa_{13}, \kappa_{23}) \subset \PP \mathbb{S}^3$ sitting inside the linear space $\PP V$ of matrices of the form 
$$
\left(\begin{smallmatrix} \kappa_{11} & \kappa_{12} & 0 \\ \kappa_{12}&\kappa_{22}&0 \\ 0& 0 & \kappa_{33}\end{smallmatrix}\right).
$$
The surface $Q$ is the join of the curve $X =V(\kappa_{11}\kappa_{22}- \kappa_{12}^2 - \kappa_{11}^2 \subset \PP \mathbb{S}^2$ from Example \ref{ex: 2x2ML1 quadric} and the point $Y = V(0) \subseteq \PP \secondvectorspace = \langle \left(\begin{smallmatrix} 0 & 0 & 0 \\ 0&0&0 \\ 0& 0 & 1\end{smallmatrix}\right) \rangle $. The restriction of the determinant to $V$ factors, $\det|_V = (\kappa_{11}\kappa_{22}- \kappa_{12}^2)\kappa_{33}$, where
\begin{align*}
    \mathrm{MLD}_{(\kappa_{11}\kappa_{22}- \kappa_{12}^2)}(X) = 1 \\
    \mathrm{MLD}_{\kappa_{33}}(Y) = 1.
\end{align*}
Now Lemma \ref{lem: dual-planecurve} can be applied to the linear space $V$ and then extended to all of $\mathbb{S}^3$ by projecting to $V$ (as described in \cite[Proposition~2.8]{ML1}) which gives $\mathrm{MLD}_{\det}(Q) = 1$ with
\begin{align*}
    \Phi_{\mathbb{S}^3, Q, \det} =   \Phi_{X, \mathbb{S}^2, \det} \cdot \Phi_{Y, C_Y, \kappa_{33}} = 4\left( \frac{s_{22}}{s_{11}s_{22} - s_{12}^2 + s_{22}^2}\right)^2 \cdot \frac{1}{s_{33}}
\end{align*}
\end{example}

We now turn our attention to a larger class of varieties, those whose dual variety is a nonplanar curve with a special secant line.
\begin{theorem}
\label{thm: degensurface}
Let $X \subset \PP \vectorspace \cong \PP^3$ be the dual variety of a curve $X^\vee$ that is not a plane curve. Suppose that there are two smooth points $[\alpha], [\beta] \in X^\vee$ such that the secant line/pencil they span meets $X^\vee$ in $\mathrm{deg}(X^\vee) -1$ many points. Let $F = \alpha \cdot \beta$ then $\mathrm{MLD}_{F}(X) = 1$, moreover $X$ is ruled by a family of curves $C$ such that $\mathrm{MLD}_{F}(C) = 1$.
\end{theorem}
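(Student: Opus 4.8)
The plan is to prove the two assertions separately: $\mathrm{MLD}_F(X)=1$ from the product formula (Theorem~\ref{thm: F-adjoineddegree}), and the ruling from the classification of curves with rational MLE (Theorem~\ref{thm: finalcurve}). Write $\ell\subset\PP\vectorspace^\ast$ for the line (pencil) spanned by $[\alpha],[\beta]$, let $m:=V(\alpha)\cap V(\beta)=\ell^\vee\subset\PP\vectorspace$ be the dual line, and put $d:=\deg X^\vee$. Since $X^\vee$ is not a plane curve it is in particular not a line, so $X$ is non-degenerate; and since $\dim X^\vee=1<2=\dim X$, the Gauss map $\gamma_X\colon X\dashrightarrow X^\vee$ contracts, its fibres being a one-parameter family of lines ruling $X$ along each of which $\gamma_X$ is constant. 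Finally $\nabla(\alpha\beta)=\beta\,\alpha+\alpha\,\beta$, so $\nabla F|_X$ is a rational map $X\dashrightarrow\ell$ whose fibres are the plane sections $C_\gamma:=X\cap[\gamma]^\vee$, $[\gamma]\in\ell$ (equivalently $X\cap V(\alpha-c\beta)$, $c\in\PP^1$); as the planes $[\gamma]^\vee$ are exactly the planes through $m$ and these sweep out $\PP\vectorspace$, the curves $C_\gamma$ rule $X$. For part one, note that since $X$ is a hypersurface $(\TT_pX)^\vee=\gamma_X(p)$, a point sweeping out $X^\vee$, so $\gamma^\vee_{X,F}(p)=\gamma_X(p)+\nabla_pF$ is the line joining a point of $X^\vee$ to a point of $\ell$. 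Along a ruling line $L_q:=\overline{\gamma_X^{-1}(q)}$ the restriction of $[\nabla F]$ is the map $p\mapsto[\,\alpha+\tfrac{\alpha(p)}{\beta(p)}\beta\,]$, which for generic $q$ is a degree-one (Möbius) map onto $\ell$ and degenerates only for the finitely many $q$ whose ruling line meets $m$ or lies in a plane of the pencil through $m$. Hence $\gamma^\vee_{X,F}$ is birational onto its (irreducible, two-dimensional) image — a general line $q+r$, $q\in X^\vee$, $r\in\ell$, has as its only preimage the unique point of $L_q$ sent to $r$ by $\nabla F$ — so $\deg(\gamma^\vee_{X,F})=1$, and moreover every such $q+r$ with $q\neq r$ occurs in the image.

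The crux is $\mathrm{order}(\overline{\mathrm{Im}(\gamma^\vee_{X,F})})$. For each of the $d-1$ points $q_i\in X^\vee\cap\ell$, the members of the image surface passing through $q_i$ form a proper closed subset of this irreducible surface (its general member $q+r$ misses the fixed point $q_i$), hence an at most one-dimensional family, so their union is a proper closed subset $S_i\subsetneq\PP\vectorspace$. Take $[\bu]$ general, in particular $[\bu]\notin\ell\cup S_1\cup\cdots\cup S_{d-1}$. Any member $L$ of $\overline{\mathrm{Im}(\gamma^\vee_{X,F})}$ with $[\bu]\in L$ meets $\ell$, hence lies in the plane $\Pi:=[\bu]+\ell$, and meets $X^\vee\cap\Pi$; by Bézout $X^\vee\cap\Pi$ consists of $X^\vee\cap\ell$ together with exactly one residual point $q_{\mathrm{new}}\notin\ell$ — this is precisely where the hypothesis $\#(X^\vee\cap\ell)=d-1$ enters, as it forces the residue to be a single point. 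Since $[\bu]\notin S_i$, none of the lines $[\bu]+q_i$ is a member, while $[\bu]+q_{\mathrm{new}}$ joins $q_{\mathrm{new}}\in X^\vee$ to the point $([\bu]+q_{\mathrm{new}})\cap\ell$ of $\ell$ and so is a member. Thus $\mathrm{order}(\overline{\mathrm{Im}(\gamma^\vee_{X,F})})=1$, and Theorem~\ref{thm: F-adjoineddegree} gives $\mathrm{MLD}_F(X)=1\cdot 1=1$.

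For the ruling I would fix a general $[\gamma]\in\ell$ and apply Theorem~\ref{thm: finalcurve} to $C_\gamma$ inside the plane $[\gamma]^\vee\cong\PP^2$; as $C_\gamma$ is a reduced irreducible curve (Bertini) of degree $\deg X>1$ spanning $[\gamma]^\vee$, by \cite[Proposition~2.8]{ML1} we may replace $F$ by $F|_{[\gamma]^\vee}$. The forms $\alpha$ and $\beta$ both cut out the line $m$ in $[\gamma]^\vee$, so their restrictions there are proportional and $F|_{[\gamma]^\vee}=\tilde\alpha_\gamma^{\,2}$ for a linear form $\tilde\alpha_\gamma$ whose class in $([\gamma]^\vee)^\ast$ is the point representing $m$; this verifies the membership $F|_{[\gamma]^\vee}-\tilde\alpha_\gamma^{\deg F}\in\mathcal{I}(C_\gamma)$ of Theorem~\ref{thm: finalcurve} on the nose. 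For the multiplicity condition, the standard description of the dual of a hyperplane section gives $C_\gamma^\vee=\overline{\pi_{[\gamma]}(X^\vee)}$, where $\pi_{[\gamma]}\colon\PP\vectorspace^\ast\dashrightarrow([\gamma]^\vee)^\ast$ is the linear projection away from $[\gamma]$ — indeed $\TT_pC_\gamma=\TT_pX\cap[\gamma]^\vee=\pi_{[\gamma]}((\TT_pX)^\vee)$ and $\gamma_X|_{C_\gamma}$ is dominant onto $X^\vee$ — and for general $[\gamma]$ this is an irreducible plane curve of degree $d$. The fibre $\pi_{[\gamma]}^{-1}([\tilde\alpha_\gamma])$ is the pencil of planes through $m$, i.e.\ the line $\ell$, so it meets $X^\vee$ in the $d-1$ points of $X^\vee\cap\ell$; hence $C_\gamma^\vee$ has at least $d-1$ local branches at $[\tilde\alpha_\gamma]$, and being irreducible of degree $d$ it has multiplicity exactly $d-1=\deg(C_\gamma^\vee)-1$ there. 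Both conditions of Theorem~\ref{thm: finalcurve} hold with $\tilde\alpha=\tilde\alpha_\gamma$, giving $\mathrm{MLD}_{F|_{[\gamma]^\vee}}(C_\gamma)=1$, hence $\mathrm{MLD}_F(C_\gamma)=1$ for a dense family of $[\gamma]\in\ell$, which still rules $X$.

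I expect the main obstacle to be the order computation: the naive expectation that $\overline{\mathrm{Im}(\gamma^\vee_{X,F})}$ is \emph{all} lines meeting $X^\vee$ and $\ell$ would give $\mathrm{MLD}_F(X)=d$, so one must genuinely argue that the lines through the base points $X^\vee\cap\ell$ contributed by the closure of the image sweep out only proper subvarieties that a general $[\bu]$ avoids, and that the residual intersection point in $\Pi$ is unique — the exact place the numerical hypothesis is used. The rest is genericity bookkeeping (the $d-1$ intersection points distinct and appearing reduced and transverse in the relevant plane sections, general ruling lines disjoint from $m$ and not contained in a plane of the pencil, $C_\gamma$ irreducible and non-degenerate) together with recording the classical identity $C_\gamma^\vee=\overline{\pi_{[\gamma]}(X^\vee)}$; the algebra itself is minimal.
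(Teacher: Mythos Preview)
Your argument for $\mathrm{MLD}_F(X)=1$ follows the paper's strategy: both apply Theorem~\ref{thm: F-adjoineddegree}, identify $\overline{\mathrm{Im}(\gamma^\vee_{X,F})}$ with the family of lines meeting $X^\vee$ and $\ell$, compute its order as $1$ via the residual point of $X^\vee\cap\Pi$ outside $\ell$, and check birationality of $\gamma^\vee_{X,F}$ by restricting $\nabla F$ to a generic contact line $L_q$. Your order computation is in fact more careful than the paper's --- you explicitly exclude the lines through the base points $q_i$ via the $S_i$ argument, whereas the paper simply says the residual point ``determines a unique line in the family''. There is however one gap on the degree side: you assert that only \emph{finitely many} ruling lines $L_q$ meet $m$, but give no reason. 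What is actually needed is that \emph{not all} $L_q$ meet $m$; the paper obtains this by observing (via the duality $L_q=(\TT_qX^\vee)^\vee$) that $L_q\cap m\neq\emptyset$ is equivalent to $\TT_qX^\vee\cap\ell\neq\emptyset$, and that a space curve all of whose tangent lines meet a fixed line is planar --- this is precisely where the hypothesis ``$X^\vee$ is not a plane curve'' enters.

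Your treatment of the ruling is genuinely different. The paper's curves $C$ are the contact loci $L_q$ themselves --- \emph{lines} --- and $\mathrm{MLD}_F(L_q)=1$ is equivalent, via the Euler-characteristic formula, to $F|_{L_q}$ having two distinct zeros, which is exactly the birationality of $\nabla F|_{L_q}$ just established; so for the paper the ``moreover'' clause is a free byproduct of the degree computation. You instead take the pencil of plane sections $C_\gamma=X\cap[\gamma]^\vee$ and invoke Theorem~\ref{thm: finalcurve} through the identity $C_\gamma^\vee=\overline{\pi_{[\gamma]}(X^\vee)}$ and the observation that the fibre of $\pi_{[\gamma]}$ over the point representing $m$ is $X^\vee\cap\ell$. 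This is a valid and rather elegant alternative: it produces a ruling by genuinely nonlinear curves with rational MLE and makes visible how the surface theorem feeds back into the curve classification. The trade-off is the extra genericity bookkeeping you flag at the end (irreducibility of $C_\gamma$, birationality of $\pi_{[\gamma]}|_{X^\vee}$ for general $[\gamma]\in\ell$, and that the $d-1$ preimages really force multiplicity $d-1$), all of which the paper's choice of ruling sidesteps.
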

\begin{proof}
We prove this by proving that both factors of Theorem \ref{thm: F-adjoineddegree} are $1$. By construction, the map $\gamma^\vee_{X,F} $ maps every point of $X$ to a pencil that meets the curve $X^\vee$ and the secant line/pencil spanned by $[\alpha], [\beta]$, denoted by $(\PP\secondvectorspace)^\vee$. The family of lines that meet a curve and a special secant line are known to be of order $1$. To see this, a generic $u$ spans a plane $u + \PP\secondvectorspace^\vee$ which by Bézout's theorem meets $X^\vee$ in a unique point outside $\PP\secondvectorspace^\vee$ that determines a unique line in the family. We now turn our attention to the second factor of the formula in Theorem \ref{thm: F-adjoineddegree}. We must show that $\gamma_{X,F} $ is birational onto its image to conclude the proof. The closure of the fibers of the Gauss map $\gamma_X$ are called contact loci. Bi-duality \cite[Theorem~1.1]{GKZ94} and knowing that $W(X)=W(X^\vee)$ is a projective bundle over the smooth locus of $X^\vee$ implies that the generic contact locus $C$ is a line. To ensure that $\gamma_{X,F} $ is birational we must prove that the restriction of $\nabla F|_C = [\beta(x) \alpha + \alpha(x) \beta] \colon C \to (\PP\secondvectorspace)^\vee$ is birational for a generic $C$. This is equivalent to $F|_C$ having two distinct zeros and by applying the Euler characteristic formula \cite[Theorem~1.3]{DRGS22} this is equivalent to $\mathrm{MLD}_{F}(C) = 1$ . We do a proof by contradiction and assume $F|_C$ has a double root. This in turn is equivalent to the generic contact locus $C$ meeting $\PP\secondvectorspace = V(\alpha) \cap V(\beta)$ which is equivalent to the generic tangent line of $X^\vee$ meeting $(\PP\secondvectorspace)^\vee$. This is a contradiction because then $X^\vee$ would be a plane curve by Terracini lemma \cite[Lemma~1.11]{Ad87} and contradict our assumption. This concludes the proof.
\end{proof}

\begin{example}
\label{ex: surfacedisc}
    Let us consider the projective space $\PP(\mathbb{C}^{4})$ and identify it with its dual vector space via the bilinear form $\langle x, y\rangle = \sum_{i=0}^3 x_iy_i$. Let $X \subset \PP^n$ be the dual variety of the twisted cubic $X^\vee \subset \PP^n$,
    \begin{align*}
        X = &  V(x_1^2x_2^2-4x_0x_2^3-4x_1^3x_3+18x_0x_1x_2x_3-27x_0^2x_3^2) \\
        X^\vee = & V(u_2^2-u_1u_3,u_1u_2-u_0u_3,u_1^2-u_0u_2).
    \end{align*}
The points $(1:0:0:0)$ and $(0:0:0:1)$ lie on $X^\vee$ and we identify them with the linear forms $x_0, x_3 \in X^\vee$. The smooth curve $X^\vee$ is degree $3$ and the secant line spanned by $x_0, x_3$ satisfies the assumptions of Theorem \ref{thm: degensurface}. The corresponding solution to the homaloidal PDE is 
$$
\Phi_{X, \vectorspace, (x_0x_3)}(\bu) = \frac{u_1u_2}{(u_1u_3-u_2^2)(u_0u_2-u_1^2)}
$$
which is verified using the following code in \texttt{Macaulay2} \cite{M2}. 

\begin{figure}[H]
\begin{lstlisting}[language=Macaulay2]
R = QQ[x_0..x_3]; S = QQ[u_0..u_3];
X = x_1^2*x_2^2-4*x_0*x_2^3-4*x_1^3*x_3+18*x_0*x_1*x_2*x_3-27*x_0^2*x_3^2
Phi = (u_1*u_2)/(u_1^2*u_2^2-u_0*u_2^3-u_1^3*u_3+u_0*u_1*u_2*u_3)
p = numerator Phi; gradp = diff(vars S, p)
q = denominator Phi; gradq = diff(vars S, q)
nablaPhi = flatten entries (
    sub((1/q^2), frac S)*sub((q*gradp - p*gradq), frac S))
MLE = apply(nablaPhi, f -> -f/Phi)
MLE_0*MLE_3 == Phi
ker (map(S,R, apply(q^2*nablaPhi, f -> sub(f,S)))) == ideal(X)
\end{lstlisting}
\end{figure}

Notice that this solution is the product of the two functions
$$
\frac{u_2}{(u_0u_2-u_1^2)} \quad \frac{u_1}{(u_1u_3-u_2^2)} 
$$
that are solutions to the homaloidal PDE respectively to $x_0$ and $x_3$ even though the assumptions of Lemma \ref{lem: dual-planecurve} are not satisfied. 
\end{example}

\section{The Generic Maximum Likelihood Degree}
\label{sec: genericMLD}
 
In this section we provide the proof of Theorem \ref{thm: finalpolarformula} and introduce the notion of a variety $X \subset \PP \vectorspace$ being \emph{$F$-general}. We then motivate the name \emph{$F$-general} by proving that a generic perturbation $gX$ of $X$, where $g \in \mathrm{PGL}(\vectorspace)$, is $F$-general. 

\begin{definition}
    \label{def: F-general}
Let $F \colon \vectorspace \dashrightarrow \CC$ be homogeneous of nonzero degree. A variety $X \subset \PP\vectorspace$ is said to be \emph{$F$-general} if $W(X) \cap \overline{\Gamma}_{\nabla F} = \emptyset$ and the variety
\begin{align*}
\{(x,y,z) \colon (x,y) \in W(X) \text{ and } (x,z) \in \overline{\Gamma}_{\nabla F} \} \subset \PP\vectorspace \times \PP \vectorspace^\ast \times \PP \vectorspace^\ast
\end{align*}
is irreducible.
\end{definition}

The intuitive idea behind $X$ being $F$ general is that $X$ should intersect a stratification of the divisor associated with $F$ transversely for the formula to hold. The varieties of maximum likelihood degree $1$ fail at this.

\begin{lemma}
\label{lem: ezfgeneral}
    Suppose $F$ is a non-constant homogeneous polynomial such that $V(F) \subset \PP\vectorspace$ is smooth, then $X$ is $F$-general if and only if 
    $$W(X) \cap \overline{\Gamma}_{\nabla F} = \emptyset.
    $$ 
    Moreover, if $X$ is also smooth this is equivalent to $X$ and $V(F)$ intersecting transversely.  
\end{lemma}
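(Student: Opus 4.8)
The plan is to analyze the two conditions in Definition \ref{def: F-general} separately and show that, under the hypothesis that $V(F)$ is smooth, the irreducibility condition is automatic, so that $F$-generality reduces to the emptiness condition $W(X) \cap \overline{\Gamma}_{\nabla F} = \emptyset$. First I would observe that since $V(F)$ is smooth, the gradient map $\nabla F \colon \PP\vectorspace \dashrightarrow \PP\vectorspace^\ast$ is actually a \emph{morphism} on a neighborhood of $V(F)$; more precisely $\nabla F$ is defined and finite-to-one (indeed an isomorphism onto its image near $V(F)$, since the Hessian is nondegenerate along a smooth hypersurface — or at least a morphism away from the singular locus of $V(F)$, which is empty). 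In fact, the key point is that $\overline{\Gamma}_{\nabla F} \to \PP\vectorspace$ is birational (it is the graph of a rational map), so its restriction over any subvariety stays birational onto its image.

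Next I would examine the auxiliary variety
\[
\Sigma = \{(x,y,z) \colon (x,y) \in W(X), \ (x,z) \in \overline{\Gamma}_{\nabla F}\} \subset \PP\vectorspace \times \PP\vectorspace^\ast \times \PP\vectorspace^\ast.
\]
This is the fiber product $W(X) \times_{\PP\vectorspace} \overline{\Gamma}_{\nabla F}$ over the first projections. Since $\overline{\Gamma}_{\nabla F} \to \PP\vectorspace$ is birational (an isomorphism over the domain of definition of $\nabla F$), the projection $\Sigma \to W(X)$ is birational: over a point $(x,y) \in W(X)$ with $x$ in the domain of $\nabla F$ the fiber is the single point $(x,y,\nabla_x F)$. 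Because $W(X)$ is irreducible (it is the conormal variety of an irreducible variety $X$), and a variety birational to an irreducible variety is irreducible, $\Sigma$ is irreducible — \emph{provided} that the generic point of $W(X)$ lies over the domain of $\nabla F$. This is where the emptiness hypothesis enters: if $W(X) \cap \overline{\Gamma}_{\nabla F} = \emptyset$ then in particular $W(X)$ is not contained in the indeterminacy locus of $\nabla F$ (a point in the indeterminacy locus, if it lay in $W(X)$, would — after taking closures — force a point of $\overline{\Gamma}_{\nabla F}$ into $W(X)$), so the generic point of $W(X)$ is in the domain and the argument goes through. Hence $W(X) \cap \overline{\Gamma}_{\nabla F} = \emptyset$ already implies both conditions, giving the first equivalence.

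For the "moreover" clause, assume additionally that $X$ is smooth. Then $W(X) = \{(x, H) : x \in X, \ \TT_x X \subseteq H\}$ and a pair $(x,H)$ lies in $W(X) \cap \overline{\Gamma}_{\nabla F}$ precisely when $x \in X$, $x \in V(F)$ (so that $\nabla_x F$ is a hyperplane through $x$ — note $\nabla_x F \in V(F)^\vee$-type condition; by Euler, $\nabla_x F(x) = \deg(F)\, F(x)$, so $x \in \nabla_x F^\vee$ iff $F(x)=0$), and $\TT_x X \subseteq \nabla_x F^\vee = \TT_x V(F)$. Since both $X$ and $V(F)$ are smooth of dimensions $\dim X$ and $\dim\PP\vectorspace - 1$, the containment $\TT_x X \subseteq \TT_x V(F)$ at a common point $x$ is exactly the failure of transverse intersection at $x$. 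Therefore $W(X) \cap \overline{\Gamma}_{\nabla F} = \emptyset$ if and only if there is no point of $X \cap V(F)$ at which the tangent spaces fail to be transverse, i.e. $X$ and $V(F)$ meet transversely.

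\textbf{Main obstacle.} I expect the delicate point to be the claim that the emptiness $W(X) \cap \overline{\Gamma}_{\nabla F} = \emptyset$ guarantees the generic point of $W(X)$ lies in the \emph{domain} of $\nabla F$ (equivalently, that $W(X)$ meets the graph over the indeterminacy locus only if it meets $\overline{\Gamma}_{\nabla F}$). One must argue carefully that the closure $\overline{\Gamma}_{\nabla F}$ contains, over every point of the base indeterminacy locus $B$ of $\nabla F$, at least one point $(b, \cdot)$, so that $W(X) \cap (B \times \PP\vectorspace^\ast)$ nonempty would force $W(X) \cap \overline{\Gamma}_{\nabla F} \neq \emptyset$ after intersecting fibers — this needs that the projection $\overline{\Gamma}_{\nabla F} \to \PP\vectorspace$ is surjective, which holds since it is dominant (birational) and proper. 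Once that is in hand the rest is standard: birational base change preserves irreducibility, and the transversality translation in the smooth case is a direct tangent-space computation using Euler's identity.
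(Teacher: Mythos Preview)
Your approach is correct and in essence matches the paper's, but you make it harder than necessary and manufacture a phantom obstacle. The observation you need---and which you actually state parenthetically (``at least a morphism away from the singular locus of $V(F)$, which is empty'')---is that smoothness of $V(F)$ means the indeterminacy locus of $\nabla F$ is \emph{empty}, since it coincides with $\mathrm{Sing}(V(F))$. Hence $\nabla F$ is a morphism on all of $\PP\vectorspace$, so $\overline{\Gamma}_{\nabla F}=\Gamma_{\nabla F}$ and its projection to $\PP\vectorspace$ is an isomorphism. The base change $\Sigma \to W(X)$ is then an isomorphism as well, and irreducibility of $\Sigma$ follows immediately from irreducibility of $W(X)$, with no appeal to the emptiness hypothesis. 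Your ``main obstacle'' therefore evaporates: there is no indeterminacy locus to worry about, and no need for the surjectivity/properness argument you outline. (Incidentally, ``birational to an irreducible variety'' does not by itself imply irreducible---extra components over the exceptional locus are exactly what you would have to rule out---so it is fortunate that here you actually have an isomorphism.) The paper argues exactly this way, phrasing the projection $\Sigma \to X$ as a projective bundle with fibers $\{x\}\times W_x(X)\times\{\nabla_xF\}$.

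For the ``moreover'' clause your tangent-space computation via Euler's identity is fine. The paper compresses it to one line by noting that for a smooth hypersurface $W(V(F))=\Gamma_{\nabla F}$, and that two smooth varieties meet transversely precisely when their conormal varieties are disjoint; this is the same content.
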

\begin{proof}
    By the assumption that $V(F)$ is smooth, $\nabla F$ is a morphism. Thus $\overline{\Gamma}_{\nabla F} = \Gamma_{\nabla F}$ and the projection
\begin{align*}
        \{(x,y,z) \colon (x,y) \in W(X) \text{ and } (x,z) \in \Gamma_{\nabla F} \} & \to \PP\vectorspace \\
    (x,y,z) &\mapsto x
\end{align*}
    is a projective bundle over $X$ with fibers $\{ x\}\times W_x(X) \times \{ \nabla_xF\}$, hence irreducible. This proves the first part of the claim. For the second part, assuming that $V(F), X$ are smooth and transverse means that 
    $$
    \emptyset  = W(X) \cap W(V(F)) = W(X) \cap \Gamma_{\nabla F}.
    $$
\end{proof}

\begin{example}
    Suppose that $F=Q$ is the Fermat/isotropic quadric $Q= \sum_i x_i^2$ on $\CC^n$. By Lemma \ref{lem: ezfgeneral} a smooth variety $X$ being $Q$-general is equivalent to $X$ intersecting the quadric $V(Q)$ transversely.  
\end{example}

\begin{example}
     Consider the smooth curve $X= V(\kappa_{11}\kappa_{22}-\kappa_{12}^2 - \kappa_{11}^2) \subset \PP \mathbb{S}^2$ from Example \ref{ex: 2x2ML1 quadric}. Letting $F = \det =\kappa_{11}\kappa_{22}-\kappa_{12}^2$ we prove that that $X$ is not $F$-general. Observe that $X$ is tangent to $V(F)$ at $(\begin{smallmatrix} 0 & 0 \\ 0 & 1 \end{smallmatrix} )$ with the common tangent line $V(\kappa_{11})$. The curve $X$ is not $F$-general by Lemma \ref{lem: ezfgeneral}. We may also verify that
     $$
\Big( \left[\begin{smallmatrix} 0 & 0 \\ 0 & 1 \end{smallmatrix} \right], \left[\begin{smallmatrix} 1 & 0 \\ 0 & 0 \end{smallmatrix} \right] \Big) \in W(X) \cap \overline{\Gamma}_{\nabla F} \neq \emptyset. 
     $$
\end{example}

\begin{lemma}
\label{lem: generic-graph-restriction}
    Let $\varphi \colon \PP^n \dashrightarrow \PP^n$ be a rational map and $X\subset \PP^n$. A generic perturbation $gX$ by $g \in \mathrm{PGL}(\vectorspace)$ will respect the graph of $\varphi$ in the sense 
    \begin{align*}
         ((gX) \times \PP^n) \cap \overline{\Gamma}_{\varphi} = \overline{\Gamma}_{(\varphi|_{gX})}  \subset  \PP^n \times \PP^n.
    \end{align*}
\end{lemma}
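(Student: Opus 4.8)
The inclusion $\overline{\Gamma}_{(\varphi|_{gX})} \subseteq ((gX) \times \PP^n) \cap \overline{\Gamma}_{\varphi}$ holds for \emph{any} $g$ and requires no genericity: the graph of the restriction $\varphi|_{gX}$ is, set-theoretically, the set of pairs $(x, \varphi(x))$ with $x$ a point of $gX$ where $\varphi$ is defined, and its closure lands inside $gX \times \PP^n$ and inside $\overline{\Gamma}_\varphi$ simply because these are closed sets containing it. So the content of the lemma is the reverse inclusion, and the only way it can fail is if some component of $\overline{\Gamma}_\varphi$ lying over the indeterminacy locus $\mathrm{Ind}(\varphi)$ meets $gX \times \PP^n$ in something of dimension $\geq \dim(gX)$ — i.e., the ``vertical'' part of $\overline{\Gamma}_\varphi$ over $\mathrm{Ind}(\varphi)$ intersects $gX$ badly.

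The plan is a dimension-count over the indeterminacy locus combined with a standard Kleiman-type transversality/genericity argument for the $\mathrm{PGL}$-action. Write $B = \mathrm{Ind}(\varphi) \subsetneq \PP^n$, a proper closed subset, and let $E = \overline{\Gamma}_\varphi \cap (B \times \PP^n)$ be the part of the closed graph lying over $B$; since $\overline{\Gamma}_\varphi$ is irreducible of dimension $n$ and projects dominantly (indeed birationally) to $\PP^n$, we have $\dim E \le n - 1$. Decompose $E = \bigcup_j E_j$ into irreducible components and let $B_j = \overline{\pi_1(E_j)} \subseteq B$, so $\dim E_j \le \dim B_j + (\text{generic fiber dimension of } E_j \to B_j)$. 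Over $\PP^n \setminus B$ the map $\varphi$ is a morphism, so there $((gX)\times\PP^n)\cap\overline{\Gamma}_\varphi = \Gamma_{\varphi|_{(gX)\setminus B}}$ automatically; hence the failure set is confined to $gX \cap B_j$ and its preimages in the $E_j$. For a generic $g \in \mathrm{PGL}(\vectorspace)$, the translate $gX$ meets each $B_j$ in the expected dimension $\dim X + \dim B_j - n$ (which is $< \dim X$ since $\dim B_j \le n-1$), by the homogeneity of $\PP^n$ under $\mathrm{PGL}$ and Kleiman's transversality theorem; more precisely, one applies the moving lemma to the $\mathrm{PGL}$-action on $\PP^n$ to the pair $(X, B_j)$, and simultaneously (working inside $\PP^n \times \PP^n$, acting by $g$ on the first factor only, which still acts transitively on the relevant base) to the pair $(X, E_j)$ via the projection $E_j \to B_j$. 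This forces $\dim\big(((gX)\times\PP^n)\cap E\big) < \dim X = \dim(gX)$ for generic $g$, so this intersection contributes no component of dimension $\dim(gX)$ to $((gX)\times\PP^n)\cap\overline{\Gamma}_\varphi$.

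Putting the two pieces together: $((gX)\times\PP^n)\cap\overline{\Gamma}_\varphi$ has every component of dimension at most $\dim(gX)$ (it maps to $gX$ with generically finite — indeed generically injective — fibers away from $B$), the unique component of dimension exactly $\dim(gX)$ is the closure of the graph of $\varphi$ over $(gX)\setminus B$, which is $\overline{\Gamma}_{(\varphi|_{gX})}$, and everything else lies in the lower-dimensional set $((gX)\times\PP^n)\cap E$, which for generic $g$ is already contained in $\overline{\Gamma}_{(\varphi|_{gX})}$ because it sits inside the closure of the graph over the open dense subset of $gX$ where $\varphi$ is regular — here one uses that $(gX)\setminus B$ is dense in $gX$, which holds precisely because $gX \not\subseteq B$ for generic $g$ (true as $B$ is proper). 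Therefore the two closed sets agree.

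The main obstacle is the genericity step: one must ensure that translating $X$ by $g \in \mathrm{PGL}(\vectorspace)$ acting on the \emph{first} factor of $\PP^n \times \PP^n$ is ``transverse enough'' to the components $E_j$ of the vertical graph, not merely to their images $B_j$ in the base. The clean way to handle this is to note that $\mathrm{PGL}(\vectorspace)$ acts transitively on $\PP^n$, hence the induced action on $\PP^n \times \PP^n$ (trivial on the second factor) has orbits whose projection to the first factor is all of $\PP^n$; applying Kleiman's theorem in the equivariant form, for generic $g$ the intersection $(gX \times \PP^n) \cap E_j$ is either empty or has dimension $\dim X + \dim E_j - n \le \dim X - 1 < \dim X$, which is exactly what is needed. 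One should double-check that the action, though not transitive on $\PP^n\times\PP^n$, still satisfies the hypotheses of the moving lemma by restricting attention to the locally closed strata where $E_j \to B_j$ has constant fiber dimension and invoking generic smoothness there.
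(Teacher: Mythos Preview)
Your overall strategy matches the paper's: both argue the easy inclusion, localize the possible failure over the indeterminacy locus, and use Kleiman-type genericity to bound the dimension of the ``vertical'' part $((gX)\times\PP^n)\cap E$ strictly below $\dim X$. But your final step has a genuine gap. After establishing that any extra piece lies in the lower-dimensional set $((gX)\times\PP^n)\cap E$, you assert this set ``is already contained in $\overline{\Gamma}_{(\varphi|_{gX})}$ because it sits inside the closure of the graph over the open dense subset of $gX$ where $\varphi$ is regular.'' That justification is circular: a point $(q,y)$ with $q\in gX\cap B$ and $(q,y)\in\overline{\Gamma}_\varphi$ is a limit of pairs $(x_t,\varphi(x_t))$ with $x_t\to q$ in $\PP^n$, but nothing forces $x_t$ to remain in $gX$, so $(q,y)$ need not lie in $\overline{\Gamma}_{(\varphi|_{gX})}$. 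You have only shown these potential extra components are \emph{small}, not that they are \emph{absent}.

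The paper closes this gap cleanly by applying Kleiman to $\overline{\Gamma}_\varphi$ itself, not just to $E$: using the transitive action of $\mathrm{PGL}(\vectorspace)\times\mathrm{PGL}(\vectorspace)$ on $\PP^n\times\PP^n$ (which leaves $gX\times\PP^n$ in the same orbit as $X\times\PP^n$, since the second factor is all of $\PP^n$), one gets that $((gX)\times\PP^n)\cap\overline{\Gamma}_\varphi$ is of \emph{pure} dimension $\dim X$. Hence no lower-dimensional components exist, and the intersection is forced to equal the single component $\overline{\Gamma}_{(\varphi|_{gX})}$. This also dissolves your worry in the last paragraph about the action on the first factor alone not being transitive on $\PP^n\times\PP^n$: passing to the product group costs nothing. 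An alternative patch, if you prefer to avoid that move, is to observe that $gX\times\PP^n$ is locally cut out by $\mathrm{codim}_{\PP^n}(X)$ equations, so every component of its intersection with the $n$-dimensional $\overline{\Gamma}_\varphi$ has dimension at least $\dim X$; combined with your upper bound this again gives pure dimension.
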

\begin{proof}
    The inclusion $\overline{\Gamma}_{(\varphi|_{gX})} \subseteq \overline{\Gamma}_{\varphi} \cap (g X \times \PP^n)$ is immediate. Using Kleiman transversality for the product of projective general linear groups on $\PP^n \times \PP^n$ we can say that a general $g$ makes $gX \times \PP^n$ intersect any closed subvariety generically transverse. This means that for any subvariety $Y$ we have 
    $$
    \dim( Y \cap( gX \times \PP^n)) = \dim(Y) -\mathrm{codim}_{\mathbb{P}^n}(X)
    $$ and in particular $\overline{\Gamma}_{\varphi} \cap (gX \times \PP^n)$ is of pure dimension $\mathrm{dim}(X)$. The proof of the lemma will be complete if we prove that the intersection $\overline{\Gamma}_{\varphi} \cap (gX \times \PP^n)$ is irreducible. 
    
    By construction any additional component of $\overline{\Gamma}_{\varphi} \cap (gX \times \PP^n)$ except for $\overline{\Gamma}_{(\varphi|_{gX})}$ must be contained inside $I \times \mathbb{P}^n$ where $I$ is the indeterminacy locus of $\varphi$. It suffices to prove that $\dim \overline{\Gamma}_{\varphi} \cap (((gX) \cap I) \times \PP^n) < \dim(X)$. To see this we observe that
    $$
    \overline{\Gamma_{\varphi}} \supsetneq \overline{\Gamma_{\varphi}} \cap (I \times \PP^n),
    $$
    and since $\overline{\Gamma_{\varphi}} $ is irreducible this means that $\dim(\overline{\Gamma_{\varphi}} \cap (I \times \PP^n)) < n$. This in turn implies that
\begin{align*}
    & \dim \overline{\Gamma_{\varphi}} \cap ((X \cap I) \times \PP^n) = \dim ((\overline{\Gamma_{\varphi}} \cap (I \times \PP^n)) \cap (X \times \PP^n))  \\ =&  \dim \overline{\Gamma_{\varphi}} \cap (I \times \PP^n) - \mathrm{codim}_{\mathbb{P}^n}(X) < n -  \mathrm{codim}_{\mathbb{P}^n}(X) = \dim(X). 
\end{align*}
\end{proof}

\begin{proposition}
\label{prop: pert-F-general}
    Let $X \subset \PP\vectorspace$ and $F\colon \vectorspace \dashrightarrow \CC$ be homogeneous of non-zero degree. For a generic $g \in \mathrm{PGL}(\vectorspace)$, the perturbed variety $gX$ is $F$-general.
\end{proposition}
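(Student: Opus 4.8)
The plan is to verify separately the two conditions in Definition~\ref{def: F-general} for the perturbed variety $gX$, with $g$ ranging over $G:=\mathrm{PGL}(\vectorspace)$. Write $n:=\dim\PP\vectorspace$. Two facts drive the argument. First, $G$ acts on $\PP\vectorspace\times\PP\vectorspace^\ast$ by $g\cdot(x,[\alpha])=\bigl([gx],[(g^{-1})^\intercal\alpha]\bigr)$; this action is transitive on the point--hyperplane incidence variety $\mathcal{I}:=\{(x,[\alpha]):\alpha(x)=0\}$, and since every hyperplane tangent to $C_X$ at a point contains that point we have $W(gX)=g\cdot W(X)\subseteq\mathcal{I}$ for all $g$. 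Second, $\overline{\Gamma}_{\nabla F}$ is irreducible of dimension $n$, it is \emph{not} contained in $\mathcal{I}$ (since $\deg F\neq0$, Euler's identity gives $(\nabla_xF)(x)=(\deg F)F(x)\not\equiv 0$), and the indeterminacy locus $I$ of $\nabla F\colon\PP\vectorspace\dashrightarrow\PP\vectorspace^\ast$ has codimension at least $2$.

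For the condition $W(gX)\cap\overline{\Gamma}_{\nabla F}=\emptyset$: because $W(gX)\subseteq\mathcal{I}$, this intersection equals $g\cdot W(X)\cap(\overline{\Gamma}_{\nabla F}\cap\mathcal{I})$. Here $\dim W(X)=n-1$, while $\overline{\Gamma}_{\nabla F}\cap\mathcal{I}$ is a proper closed subvariety of the irreducible $n$-fold $\overline{\Gamma}_{\nabla F}$, hence of dimension at most $n-1$; as $(n-1)+(n-1)<2n-1=\dim\mathcal{I}$, Kleiman's transversality theorem applied to the transitive $G$-action on $\mathcal{I}$ shows that this intersection is empty for generic $g$.

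For the irreducibility condition I would pass to the total family over $G$. Put
\[
\mathcal{T}:=\bigl\{(g,x,y,z)\in G\times\PP\vectorspace\times\PP\vectorspace^\ast\times\PP\vectorspace^\ast : (x,y)\in W(gX),\ (x,z)\in\overline{\Gamma}_{\nabla F}\bigr\}.
\]
The substitution $x=gp,\ y=(g^{-1})^\intercal\beta$ identifies $\mathcal{T}$ with the fibre product $(W(X)\times G)\times_{\PP\vectorspace}\overline{\Gamma}_{\nabla F}$ formed from $((p,\beta),g)\mapsto gp$ and $(x,z)\mapsto x$. Because $\PP\vectorspace$ is smooth, every irreducible component of this fibre product has dimension at least $\dim(W(X)\times G)+n-n=n-1+\dim G$. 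On the other hand, forgetting $z$ maps $\mathcal{T}$ onto $W(X)\times G$ and is an isomorphism over the dense open locus $\{gp\notin I\}$ (there $z=\nabla_xF$ is forced), so the closure $\mathcal{T}^{\circ}$ of that locus is an irreducible component of dimension exactly $n-1+\dim G$, and any other component is contained in $M:=\{(g,x,y,z)\in\mathcal{T}:x\in I\}$. Projecting $M$ to $W(X)\times E$, where $E:=\overline{\Gamma}_{\nabla F}\cap(I\times\PP\vectorspace^\ast)$ has dimension at most $n-1$, via $(g,p,\beta,z)\mapsto((p,\beta),(gp,z))$, the fibres are the cosets $\{g:gp=x'\}$ of dimension $\dim G-n$; hence $\dim M\le(n-1)+(n-1)+(\dim G-n)=n-2+\dim G$, strictly below the dimension of every component of $\mathcal{T}$. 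Thus no second component exists and $\mathcal{T}=\mathcal{T}^{\circ}$ is irreducible. Finally $\mathcal{T}\to G$ is surjective — its fibre over $g$ is the triple variety of $gX$, nonempty because $\overline{\Gamma}_{\nabla F}\to\PP\vectorspace$ and $W(gX)\to gX$ are surjective — and $\mathcal{T}$ is irreducible and birational over $G$ to the constant family $W(X)\times G$ whose geometric fibres equal the geometrically irreducible $W(X)$, so the fibre of $\mathcal{T}\to G$ over a general $g$ is irreducible; that fibre is precisely the variety in Definition~\ref{def: F-general} for $gX$, so $gX$ is $F$-general for generic $g$.

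The main obstacle is the dimension estimate for $M$: one must show that the part of the family lying over the indeterminacy locus of $\nabla F$ is too small to produce a new irreducible component, and this is exactly where the bound $\operatorname{codim}I\ge 2$ together with the small dimension $\dim G-n$ of the orbit-fibres $\{g:gp=x'\}$ are used. A secondary point, needed for the emptiness step, is the identification $W(gX)=g\cdot W(X)\subseteq\mathcal{I}$, which lets Kleiman's theorem be invoked on the homogeneous space $\mathcal{I}$ rather than on $\PP\vectorspace\times\PP\vectorspace^\ast$, where the relevant $G$-action is not transitive.
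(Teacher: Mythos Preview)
Your proof is correct. The first step, showing $W(gX)\cap\overline{\Gamma}_{\nabla F}=\emptyset$ via Kleiman on the incidence hypersurface $\mathcal{I}=V(y(x))$, is exactly the paper's argument. For the irreducibility condition, however, you take a genuinely different route. The paper fixes a generic $g$, applies Kleiman transversality inside $V(y(x))\times\PP\vectorspace^\ast$ (using the action of $\mathrm{PGL}(\vectorspace)\times\mathrm{PGL}(\vectorspace)$), and then projects the triple variety $\Theta_{gX,F}$ by forgetting $y$ onto $\overline{\Gamma}_{(\nabla F)|_{gX}}$; irreducibility of this target needs the auxiliary Lemma~\ref{lem: generic-graph-restriction}, and the possible extra components are controlled by the \emph{singular locus of $X$} via a second Kleiman argument. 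You instead pass to the total family $\mathcal{T}$ over $G$, project by forgetting $z$ onto $W(X)\times G$, and control the excess via the \emph{indeterminacy locus of $\nabla F$}; irreducibility of the generic fibre then follows from the birationality over $G$ to the constant family $W(X)\times G$. Your approach is more self-contained in that it avoids Lemma~\ref{lem: generic-graph-restriction}, at the cost of invoking the (standard) fact that geometric irreducibility of the generic fibre passes to the general closed fibre. One small remark: your dimension bound $\dim M\le n-2+\dim G$ only uses $\dim E\le n-1$, which follows already from $I\subsetneq\PP\vectorspace$; the stronger hypothesis $\operatorname{codim} I\ge 2$ mentioned in your closing paragraph is not actually needed.
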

\begin{proof}
Let $n =\dim \PP \vectorspace$. The first part is to prove that $\overline{\Gamma}_{\nabla F} \cap W(gX) = \emptyset$. Consider the variety 
    $$
V(y(x)) \coloneqq \{([x],[y]) \colon y(x) = 0 \} \subset \PP \vectorspace \times \PP \vectorspace^{\ast}. 
    $$
    This is a smooth $2n-1$ dimensional irreducible variety, as the projection onto either factor yields a projective bundle of hyperplanes. Notice that for any projective variety $X$ we have a containment of the conormal variety $ \Con \subset V(y(x))$. Thus
    $$
\Con \cap \overline{\Gamma}_{\nabla F} \subset  V(y(x)).
    $$
Consider $y(x)$ as a divisor on $\overline{\Gamma}_{\nabla F}$, for any $x$ in the graph and outside the indeterminacy locus of $\nabla F$ we have
$$
\bu(x) =\nabla_xF(x) = \deg(F)F(x)
$$
by Euler's homogeneous function theorem. This means that 
$$
Y = \overline{\Gamma}_{\nabla F} \cap V(y(x)) \subsetneq \overline{\Gamma}_{\nabla F}
$$
so the proof is reduced to proving that 
    $$
    \emptyset = W(g \cdot X) \cap Y \subset V(y(x)).
    $$ 
    Notice that the $\mathrm{PGL}(\vectorspace)$ acts on $V(y(x))$ by 
    $$
    g\cdot(x, y) = (g\cdot x, y \circ g^{-1})
    $$
    and $g\cdot \Con= W(gX)$. This action is transitive on $V(y(x))$ because a linear map can send any hyperplane with a marked point to any another hyperplane with a marked point. Now we observe that 
    \begin{align*}
        \dim V(y(x)) =& 2n-1 \\
        \dim \Con =& n-1 \\
        \dim Y <& \dim \overline{\Gamma}_{\nabla F} = n. 
    \end{align*}
    By Kleiman transversality $W(g X)$ is generically transverse to $Y$ inside of $V(y(x))$. The dimensions of these varieties do not add up so the intersection is empty. 
    
    For the second part of the proof, let 
\begin{align*}
        \Tilde{W}(X) &= \{(x,y,z) \colon (x,y) \in W(X) \} \subset V(y(x)) \times \PP \vectorspace^\ast \\
        \Tilde{\Gamma}_{\nabla F} &= \{(x,y,z) \colon (x,z) \in \overline{\Gamma}_{\nabla F} \} \subset V(y(x)) \times \PP \vectorspace^\ast.
\end{align*}
    Our goal is to show that 
    $$
\Theta_{gX,F} \coloneqq \Tilde{W}(gX) \cap \Tilde{\Gamma}_{\nabla F}
    $$
    is irreducible. Observe that $\mathrm{PGL}(\vectorspace) \times \mathrm{PGL}(\vectorspace)$ acts transitively on $V(y(x)) \times \PP \vectorspace^\ast$ by $$
    (g,h)\cdot(x,y,z) = (g(x), y \circ g^{-1}, z\circ g^{-1})
    $$ and that $(g,h)\cdot \Tilde{W}(X) = \Tilde{W}(gX)$. We conclude that $\Theta_{gX, F}$ is a generically transverse intersection by Kleiman transversality. By Lemma \ref{lem: generic-graph-restriction} we have a map
\begin{align*}
     \pi_{x,z} \colon \Theta_{gX,F} & \to \overline{\Gamma}_{(\nabla F)|_{gX}} \\
     (x,y,z) & \mapsto (x,z)
\end{align*}
    where the codomain is irreducible. Let 
    $$
U \coloneqq \{(x,z) \colon x \in (gX)_\mathrm{reg} \} \subset \overline{\Gamma}_{(\nabla F)|_{gX}}
    $$
     and notice that $\pi_{x,z}$ is a projective bundle over $U$ because $W(gX)$ is a projective bundle over the smooth locus of $gX$. We conclude that any component which is not $\overline{\pi_{x,z}^{-1}(U)}$ must be lying over the singular locus of $gX$. Let
\begin{align*}
    \Tilde{B}(X) &\coloneqq \{(x,y,z) \colon x \in X_{\mathrm{sing}} \} \subsetneq \Tilde{W}(X)
\end{align*}
    and note that $(g,h) \cdot \Tilde{B}(X) = \Tilde{B}(gX)$. Now 
    $$
 \Tilde{B}(gX) \cap \Tilde{\Gamma}_{\nabla F}\subsetneq  \Tilde{W}(gX) \cap \Tilde{\Gamma}_{\nabla F} 
    $$
    are two generically transverse intersections. We conclude that they are of different pure dimension. Any extraneous component of $\Theta_{gX,F} = \Tilde{W}(X) \cap \Tilde{\Gamma}_{\nabla F}$ is contained in the smaller variety $\Tilde{B}(X) \cap \Tilde{\Gamma}_{\nabla F}$. This cannot happen because both varieties have different pure dimensions (equidimensional components). Therefore $\Theta_{gX,F}$ is irreducible. 
\end{proof}

\begin{lemma}
\label{lem: gen-transverse-intersect}
    Let $P$ be an irreducible smooth projective variety and consider two irreducible varieties of the form $A \subset \PP^n \times P, B \subset  P \times \PP^m$ such that 
    $$
    V = (A \times \PP^m) \cap (\PP^n \times B) \subset \PP^n \times P \times \PP^m
    $$ is irreducible and both intersecting varieties are smooth at a generic point of $V$. Let $\pi_A \colon A \to P$ denote the projection and assume it induces a surjective map of tangent spaces at a generic point of $V$. Then the intersection $V$ is generically transverse. 
\end{lemma}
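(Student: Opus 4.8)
The plan is to reduce the statement to a single linear-algebra computation on tangent spaces at a generic point of $V$, where the surjectivity of $d\pi_A$ does all the work.

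First I would fix a generic point $v=(x,p,z)\in V$ and record what is available there. The ambient variety $\PP^n\times P\times\PP^m$ is smooth, being a product of smooth varieties, so $T_v(\PP^n\times P\times\PP^m)=T_x\PP^n\oplus T_pP\oplus T_z\PP^m$. By hypothesis both $A\times\PP^m$ and $\PP^n\times B$ are smooth at $v$, which is to say that $A$ is smooth at $(x,p)$ and $B$ is smooth at $(p,z)$; hence, under the above decomposition,
\[
T_v(A\times\PP^m)=T_{(x,p)}A\oplus T_z\PP^m,\qquad T_v(\PP^n\times B)=T_x\PP^n\oplus T_{(p,z)}B,
\]
where $T_{(x,p)}A\subset T_x\PP^n\oplus T_pP$ and $T_{(p,z)}B\subset T_pP\oplus T_z\PP^m$ sit inside the ambient tangent space via the coordinate inclusions. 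Since $V$ is irreducible, it is generically transverse exactly when these two subspaces span $T_x\PP^n\oplus T_pP\oplus T_z\PP^m$ at such a generic $v$, so that is what I would verify.

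Next I would build the sum $S:=T_v(A\times\PP^m)+T_v(\PP^n\times B)$ out of three pieces. Because $0\in T_{(p,z)}B$ we have $T_x\PP^n\oplus 0\oplus 0\subset T_x\PP^n\oplus T_{(p,z)}B\subset S$, and because $0\in T_{(x,p)}A$ we have $0\oplus 0\oplus T_z\PP^m\subset T_{(x,p)}A\oplus T_z\PP^m\subset S$. It remains to put $0\oplus T_pP\oplus 0$ inside $S$, and this is precisely the content of the hypothesis: $\pi_A$ is the restriction to $A$ of the coordinate projection $\PP^n\times P\to P$, so $d_{(x,p)}\pi_A$ is the restriction to $T_{(x,p)}A$ of the projection $T_x\PP^n\oplus T_pP\to T_pP$, and by assumption this restriction is onto $T_pP$. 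Thus for every $w\in T_pP$ there is $a\in T_x\PP^n$ with $(a,w)\in T_{(x,p)}A$, hence $(a,w,0)\in T_v(A\times\PP^m)\subset S$; subtracting $(a,0,0)\in S$ gives $(0,w,0)\in S$. Together the three pieces show $S=T_v(\PP^n\times P\times\PP^m)$, which is transversality of the two ambient subvarieties at $v$, i.e., generic transversality of $V$.

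I do not expect a real obstacle: once the tangent spaces are written out the argument is two lines. The only things requiring care are bookkeeping---checking that smoothness of $A\times\PP^m$ and $\PP^n\times B$ at the generic point of $V$ is the same as smoothness of $A$ at $(x,p)$ and of $B$ at $(p,z)$, so that the displayed tangent-space decompositions are legitimate, and keeping straight that \emph{generically transverse} refers to a general point of the irreducible variety $V$, so that verifying the spanning property at one generic $v$ suffices. One may add, although it is not needed for the statement, that transversality at $v$ then automatically forces $V$ to be reduced and smooth at $v$ and of the expected dimension.
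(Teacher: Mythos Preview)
Your proposal is correct and follows essentially the same approach as the paper: both arguments write the ambient tangent space as the direct sum $T_x\PP^n\oplus T_pP\oplus T_z\PP^m$, decompose the tangent spaces of the two intersecting varieties accordingly, and then use surjectivity of $d\pi_A$ to fill in the middle summand $T_pP$. Your write-up is in fact more explicit than the paper's, which compresses the same computation into the single line $T_v(A\times\PP^m)+T_v(\PP^n\times B)=T_x\PP^n+(\pi_{A,*}T_{(x,p)}A+\pi_{B,*}T_{(p,z)}B)+T_z\PP^m$ and then invokes the surjectivity hypothesis.
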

\begin{proof}
    By assumption we have that at a generic point $(x,p,y) \in V$ the involved varieties are smooth, so we have that 
\begin{align*}
        T_{x,p,y}(A \times \PP^m) =T_{x,p}A + T_y\PP^m \\
T_{x,p,y} (\PP^n \times B) = T_x\PP^n + T_{p,y}B
\end{align*}
where $+$ denotes the linear span of all elements in two subsets of the vector space $T_{x,p,y}(\PP^n \times P \times \PP^m) \cong T_{x}(\PP^n) \oplus T_{p}(P) \oplus T_{y}(\PP^m)$. We can now deduce that
    $$
    T_{x,p,y}(A \times \PP^m) + T_{x,p,y} (\PP^n \times B) = T_x\PP^n + (\pi_{A,*}(T_{x,p}A) + \pi_{B,*}(T_{p,y}B ) ) + T_y\PP^m 
    $$
    where $\pi_{A,*}(T_{x,p}A)$ denotes the projection of $T_{x,p}A$ to $T_pP$ via the differential of $\pi_A$, and similarly for $B$. By assumption we have that $\pi_{A,*}(T_{x,p}A) = T_pP$ which completes the proof. 
\end{proof}

\begin{theorem}
\label{thm: MLDpolardegs}
Let $F \colon \vectorspace \dashrightarrow \CC$ homogeneous of nonzero degree and $X \subset \PP \vectorspace \cong \PP^n$ is $F$-general. Then
$$
\mathrm{MLD}_F(X) = \sum_{i=0}^{n-1} \delta_i(X) \mu_{i}(F)
$$
where we use the multidegrees
\begin{align*}
    [\Con] = \sum_{i=0}^{n-1} \delta_i(X)H^{n-i}(H^\vee)^{i+1}  \qquad [\overline{\Gamma}_{\nabla F}] = \sum_{j=0}^n \mu_j(F) H^j (H^\vee)^{n-j} .
\end{align*} 
If $X$ is not $F$-general this is an upper bound. 
\end{theorem}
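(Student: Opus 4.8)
The plan is to compute $\MLD$ as a single coefficient of the class, in a four-factor product of projective spaces, of an explicit birational model of the projective maximum likelihood correspondence, and then to evaluate that coefficient as a product of three classes whose multidegrees are the $\delta_i(X)$, the $\mu_j(F)$, and the Segre classes of $\mathcal O(-1)^{\oplus 3}$. Write $n=\dim\PP\vectorspace$ and $c=\mathrm{codim}_{\PP\vectorspace}(X)$, and work in $P=\PP\vectorspace\times\PP\vectorspace^\ast\times\PP\vectorspace^\ast\times\PP\vectorspace^\ast$ with coordinates $(x,y,z,\bu)$ and hyperplane classes $h,k_y,k_z,k_\bu$. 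First I would introduce the relative join, over $X$, of the conormal variety and the graph of $\nabla F$:
$$
\widetilde{\mathfrak X}=\overline{\{(x,y,z,\bu):(x,y)\in W(X),\ (x,z)\in\overline{\Gamma}_{\nabla F},\ [\bu]\in[y]+[z]\}}\subset P,
$$
where $[y]+[z]$ is the line of $\PP\vectorspace^\ast$ spanned by the points $[y]$ and $[z]$. Since $W(X)$ restricts to a $\PP^{c-1}$-bundle over $X_{\mathrm{reg}}$ and, by the disjointness clause $W(X)\cap\overline{\Gamma}_{\nabla F}=\emptyset$ of Definition \ref{def: F-general}, $[\nabla_xF]\notin(\TT_xX)^\vee$ for generic $x$, I would check that $\widetilde{\mathfrak X}$ is irreducible of dimension $n$ and that $(x,y,z,\bu)\mapsto(x,\bu)$ maps it birationally onto $\PP\mathfrak X_F$: from a generic $(x,\bu)$ one recovers $z=\nabla_xF$, and then $[y]$ as the unique point of $([\bu]+[z])\cap(\TT_xX)^\vee$ inside the $c$-plane $\gamma^\vee_{X,F}(x)$. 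By Proposition \ref{prop: projectiveMLcorr} this identifies
$$
\MLD=\deg\bigl(\pi_\bu\colon\widetilde{\mathfrak X}\to\PP\vectorspace^\ast\bigr)=\int_{\widetilde{\mathfrak X}}k_\bu^{\,n},
$$
which is exactly the coefficient of $h^{\,n}k_y^{\,n}k_z^{\,n}$ in $[\widetilde{\mathfrak X}]\in A^\ast(P)$.

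Next I would realize $\widetilde{\mathfrak X}=\widetilde W\cap\widetilde\Gamma\cap D$, where $\widetilde W$ and $\widetilde\Gamma$ are the preimages of $W(X)$ and $\overline{\Gamma}_{\nabla F}$ under the $(x,y)$- and $(x,z)$-projections of $P$, and $D=\{(x,y,z,\bu):\mathrm{rank}[\hat\bu;\hat y;\hat z]\le 2\}$ is the fibrewise join locus, depending only on $(y,z,\bu)$. From the multidegree conventions of Section \ref{sec: Notation},
$$
[\widetilde W]=\sum_{i=0}^{n-1}\delta_i(X)\,h^{\,n-i}k_y^{\,i+1},\qquad[\widetilde\Gamma]=\sum_{j=0}^{n}\mu_j(F)\,h^{\,j}k_z^{\,n-j},
$$
and $D$ is the degeneracy locus of the generically injective bundle map $\mathcal O(-1)\oplus\mathcal O(-1)\oplus\mathcal O(-1)\to\vectorspace^\ast\otimes\mathcal O$ on $P$; a short count shows it has the expected codimension $n-1$, so Thom--Porteous gives $[D]=s_{n-1}\bigl(\mathcal O(-1)^{\oplus 3}\bigr)=\sum_{a+b+c=n-1}k_y^{\,a}k_z^{\,b}k_\bu^{\,c}$.

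When $X$ is $F$-general, the key point is that $\widetilde W,\widetilde\Gamma,D$ meet generically transversely, so $[\widetilde{\mathfrak X}]=[\widetilde W]\cdot[\widetilde\Gamma]\cdot[D]$; I would deduce this from Lemma \ref{lem: gen-transverse-intersect} applied twice. First to $\widetilde W\cap\widetilde\Gamma=\Theta\times\PP\vectorspace^\ast$, where $\Theta$ is the variety of Definition \ref{def: F-general}, irreducible by hypothesis, and the tangent-surjectivity hypothesis of the lemma is supplied by the fact that $\overline{\Gamma}_{\nabla F}\to\PP\vectorspace$ is a graph; then to the intersection of $\Theta\times\PP\vectorspace^\ast$ with $D$, the surjectivity now coming from the fibre-bundle structure of $D\to\PP\vectorspace^\ast\times\PP\vectorspace^\ast$ away from the diagonal, which is available precisely because $[y]\ne[z]$ on $\Theta$ (disjointness again). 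Granting transversality, reading off the coefficient of $h^{\,n}k_y^{\,n}k_z^{\,n}$ from the triple product is mechanical: matching powers of $h$ forces the two summation indices to agree, $j=i$; matching powers of $k_y,k_z$ then forces $a=n-1-i$, $b=i$, $c=0$ (with $a+b+c=n-1$ automatic); summing over $i$ leaves exactly $\sum_{i=0}^{n-1}\delta_i(X)\mu_i(F)$.

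Finally, for the inequality when $X$ is not $F$-general: $\widetilde{\mathfrak X}$ remains an irreducible $n$-dimensional component of $\widetilde W\cap\widetilde\Gamma\cap D$ --- it lies over the ``honest'' component $\overline{\{(x,y,\nabla_xF):x\in X_{\mathrm{reg}},\,[y]\in(\TT_xX)^\vee\}}$ of $\Theta$, which is never absorbed into an excess component --- so the effective refined intersection product still satisfies $[\widetilde W]\cdot[\widetilde\Gamma]\cdot[D]=m[\widetilde{\mathfrak X}]+E$ with $m\ge 1$ and $E$ effective, and comparing the coefficient of $h^{\,n}k_y^{\,n}k_z^{\,n}$ on both sides gives $\sum_i\delta_i(X)\mu_i(F)\ge m\cdot\MLD\ge\MLD$ (if $\pi_\bu$ is not dominant then $\MLD=0$ and there is nothing to prove). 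The hard part will be the transversality verification in the $F$-general case, together with the bookkeeping of closures --- especially the behaviour of $D$ over the diagonal $\{[y]=[z]\}$, where $F$-generality gets used a third time to rule out a spurious $n$-dimensional component --- and, in the non-general case, establishing that $\widetilde{\mathfrak X}$ genuinely survives as a component of the intersection so that the excess-intersection estimate applies.
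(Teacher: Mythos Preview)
Your proposal is correct and follows essentially the same route as the paper: the paper introduces the same incidence variety $\mathfrak{N}_{X,F}=\{(x,y,z,u):(x,y)\in W(X),\ (x,z)\in\overline\Gamma_{\nabla F},\ y\wedge z\wedge u=0\}$ in $\PP^n\times(\PP^{n\ast})^3$, shows it is birational over $\mathfrak X_F$ via $(x,y,z,u)\mapsto(x,u)$, writes it as the triple intersection of the pullbacks of $W(X)$, $\overline\Gamma_{\nabla F}$, and the collinearity locus $Z$, verifies generic transversality via the same Lemma~\ref{lem: gen-transverse-intersect} applied twice, and then extracts $\sum_i\delta_i\mu_i$ from the product of multidegrees. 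The only cosmetic differences are that the paper cites \cite{DHOST16} for the class of $Z$ rather than invoking Thom--Porteous, and its treatment of the non-$F$-general inequality is terser than your excess-intersection argument.
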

\begin{proof}
In this proof we identify $\vectorspace$ with $\mathbb{C}^{n+1}$ to simplify notation but keep the distinction between dual vector spaces. Consider the incidence variety 
$$
\gencorr_{X,F} = \{ (x,y,z,u) \colon \, (x,y) \in \Con, \quad (x,z) \in \overline{\Gamma}_{\nabla F}, \quad z \wedge y \wedge u = 0 \} 
$$
as a subvariety of $\PP^n \times (\PP^{n\ast})^3$.
Here $\Con$ is the conormal variety of $X$, $\overline{\Gamma}_{\nabla F}$ is the closed graph of $\nabla F \colon \PP^n \dashrightarrow \PP^{n\ast}$ and recall that $z \wedge y \wedge u =0$ means that $z,y,u$ are collinear. We begin by showing that this variety is irreducible with the given assumptions. Because $X$ is $F$-general the projection of $\gencorr_{X,F}$ away from $u$ is a $\mathbb{P}^1$ bundle over the irreducible variety $\{ (x,y,z) \colon (x,y) \in \Con \text{ and }(x,z) \in \overline{\Gamma}_{\nabla F} \}$ and thus $\gencorr_{X,F}$ is irreducible. If $X$ is not $F$-general then $\gencorr_{X,F}$ may contain extraneous components that can create excess intersections in the computations below, hence the upper bound. 

The projection $(x,y,z,u) \mapsto u$ factors through the projection $\pi_{\vectorspace^*}$ from Definition \ref{def: formal-ML-deg} via the map
\begin{align*}
    \beta \colon & \Tilde{\gencorr}_{X,F} \dashrightarrow \mathfrak{X}_{F}  \\
    & (x,y,z,u) \mapsto (x,u).
\end{align*}
We prove that $\beta$ is birational. The map $\beta$ is dominant because for a generic $(x,u)$ we have that $\{ x \} \times \{ W_x(X) \cap ( \nabla_xF+ u  )  \} \times \{ \nabla_xF\} \times \{ u \} = \beta^{-1}(x,u)$. For a generic $(x,u) \in \mathfrak{X}_{X,F}$ the line $\nabla_xF + u$ meet the projective conormal space $W_x(X)$, in a unique point. This implies $\beta$ is birational. 

It follows with the given assumptions that (with some abuse of notation)
$$
\mathrm{MLD}_F(X) = [\gencorr_{X,F}]\cdot H_u^n.
$$
If the assumptions are not satisfied we instead need to replace $\gencorr_{X,F}$ with one of its irreducible components, and carry out the same argument to illustrate why the formula is an upper bound. Consider the equality 
$$
\gencorr_{X,F} = (\Con \times (\PP^{n*})^2) \cap \pi_{x,z}^{-1}(\overline{\Gamma}_{\nabla F}) \cap (\mathbb{P}^n \times Z),
$$
where $Z$ is the variety of collinear triples $(y,z,u)$ and where we've let $\pi_{x,z}^{-1}(\Gamma_{X,F}) = \{(x,y,z,u) \colon \, (x,z) \in \overline{\Gamma}_{\nabla F} \}$. We proceed by showing that these varieties are generically transverse. First consider the intersection $(\Con \times (\PP^{n*})^2) \cap \pi_{x,z}^{-1}(\overline{\Gamma}_{\nabla F}) $ denoted by $T \times \mathbb{P}^{n*}$ as $u$ can vary freely in both factors. The intersecting varieties are both smooth as long as $x$ is a smooth point of $X$ and $\nabla_x F$ is defined, which is true for a generic point in $T$. Lemma \ref{lem: gen-transverse-intersect} ensures that $T$ is generically transverse because the projection of the graph $\overline{\Gamma}_{\nabla F}$ to the leftmost factor is surjective on the tangent spaces of $\PP^n$.

For a generic point in $\gencorr_{X,F} =T \times \mathbb{P}^{n*} \cap \mathbb{P}^n \times Z$ the two intersecting varieties are smooth because $Z$ is smooth when $y,z,u$ are pairwise distinct. The Lemma \ref{lem: gen-transverse-intersect} ensures $\gencorr_{X,F}$ is generically transverse because the projection of $Z$ away from $u$ is surjective on the tangent spaces of $(\mathbb{P}^{n*})^2$ whenever $y,z,u$ are pairwise distinct. We conclude that 
\begin{align*}
    \mathrm{MLD}_F(X) = [\Con] \cdot [\overline{\Gamma}_{\nabla F}] \cdot [Z]\cdot H_u^n
\end{align*}
where $[\Con], [\overline{\Gamma}_{\nabla F}], [Z]$ denote the pull-backs of these varieties by projection in the Chow ring $\mathbb{Z}[H_x,H_y,H_z,H_u]$. Now we list the classes of these varieties, where the multidegrees of $[Z] \cdot H_u^n$ are $1$ (see \cite[Theorem~5.4]{DHOST16})
\begin{align*}
    [\Con] = \sum_{i=0}^{n-1} \delta_iH_x^{n-i}H_y^{i+1}  \\
    [\overline{\Gamma}_{\nabla F}] = \sum_{j=0}^n \mu_j H_x^j H_z^{n-j} \\
    [Z] \cdot H_u^n = H_u^n \cdot \sum_{k=0}^{n-1}H_y^{n-k-1}H_z^{k}.
\end{align*}
We conclude the proof by computing 
\begin{align*}
    \mathrm{MLD}_F(X) & = [\Con] \cdot [\overline{\Gamma}_{\nabla F}] \cdot [Z]\cdot H_u^n \\
     = & H_u^n \cdot \sum_{k=0}^{n-1}\sum_{j=0}^n\sum_{i=0}^{n-1} \delta_i\mu_j H_x^{n-i}H_y^{i+1}H_x^j H_z^{n-j} H_y^{n-k-1}H_z^{k} \\
     = & H_u^n \cdot \sum_{k=0}^{n-1}\sum_{j=0}^n\sum_{i=0}^{n-1} \delta_i\mu_j H_x^{n-i+j}H_y^{n-k+i} H_z^{n-j+k}
\end{align*}
By recalling that $H^{n+1}=0$ in the Chow ring of $\PP^n$ we observe that we may enforce the conditions
\begin{align*}
    j\leq i \quad i \leq k \quad k \leq j \implies i=k=j
\end{align*}
meaning that 
\begin{align*}
    \mathrm{MLD}_F(X) =  (\sum_{i=0}^{n-1} \delta_i\mu_i) H_u^nH_x^{n}H_y^{n} H_z^{n}
\end{align*}
\end{proof}

\begin{proof}[Proof of Theorem \ref{thm: finalpolarformula}]
    Combine Theorem \ref{thm: MLDpolardegs} and Proposition \ref{prop: pert-F-general}. 
\end{proof}

\begin{example}
    Let $F=Q$ be a non-degenerate quadratic polynomial on $\mathbb{P}^n$. The gradient map of $Q$ is a projective linear map and thus all the multidegrees $\mu_i(Q) =1$. We obtain the formula
    $$
    \mathrm{MLD}_Q(X) = \sum_{i=0}^{n-1} \delta_i(X),
    $$
    for the generic maximum likelihood degree with respect to $Q$. We may apply this formula to the space of all $2 \times 2$ symmetric matrices $\vectorspace = \mathbb{S}^2$. The $2 \times 2$ determinant $(\kappa_{11}\kappa_{22} - \kappa_{12}^2)$ is non-degenerate. If we use the trace pairing $\mathrm{trace}(AB)$ to identify $\mathbb{S}^2 \cong (\mathbb{S}^2)^\ast$ we may write
    $$
    \Gamma_{(\kappa_{11}\kappa_{22} - \kappa_{12}^2)} = \{(\left(\begin{smallmatrix} \kappa_{11} & \kappa_{12}  \\ \kappa_{12}& \kappa_{22}, \end{smallmatrix}\right), \left(\begin{smallmatrix} \kappa_{22} & -\kappa_{12}  \\ -\kappa_{12}& \kappa_{11} \end{smallmatrix}\right)) \} \subset (\PP \mathbb{S}^2)\times (\PP \mathbb{S}^2).
    $$
    Lemma \ref{lem: ezfgeneral} guarantees that a smooth curve $X$ is transversal to the vanishing of the determinant is $\det$-general. The polar degrees of a non-linear plane curve $C$ is $\deg(C)$and $\deg(C^\vee)$. The generic maximum likelihood degree of a nonlinear curve in $\PP \mathbb{S}^2$ can then be computed as
    $$
\mathrm{MLD}_{(\kappa_{11}\kappa_{22} - \kappa_{12}^2)}(C) = \deg(C) + \deg(C^\vee).
    $$
    Let us verify this formula for a general smooth cubic/elliptic curve $C$ by comparing it to the Euler characteristic formula from \cite[Example~4.4]{DRGS22}. The genus of $C$ is $1$ so the Euler characteristic formula yields
    $$
\mathrm{MLD}_{(\kappa_{11}\kappa_{22} - \kappa_{12}^2)}(X) = -\chi(C) + \deg(C) + \#(Q\cap C) = (-2 + 2\cdot 1) + 3 +6 = 9.
    $$
The Plücker formula yields $\deg(C^\vee) = 3(3-1)=6$. Theorem \ref{thm: MLDpolardegs} agrees with the previous Euler characteristic formula
\begin{align*}
    \mathrm{MLD}_{(\kappa_{11}\kappa_{22} - \kappa_{12}^2)}(C) = \deg(C) + \deg(C^\vee) = 3 + 6 = 9. 
\end{align*}
\end{example}

\begin{example}
\label{ex: linearspacesexpected}
    Theorem \ref{thm: MLDpolardegs} suggests that the maximum likelihood degree of a linear space $L \subset \PP\vectorspace$ is exactly a multidegree of the closed graph $\overline{\Gamma}_{\nabla F}$ (and Theorem \ref{thm: finalpolarformula} confirms this). These have been studied in the literature \cite{DMS21} and computed in the case where $F$ is the determinant of a symmetric $m \times m$ matrix \cite{MMW21}.
\end{example}

\begin{example}
Let us apply Theorem \ref{thm: finalpolarformula} to $\mathbb{S}^3$ and $F = \det$. Consider a generic quadric hypersurface $Q \subset \PP \mathbb{S}^3$ and a quintic curve $C \subset \PP \mathbb{S}^3$ cut out by two generic quadrics and two linear forms. The conormal varieties have the multidegrees:
    \begin{align*}
        [W( Q)] =& 2(H_K^5H_S + H_K^4H_S^2 + H_K^3H_S^3 + H_K^2H_S^4 + H_KH_S^5) \\
        [W( C)] =& 8H_K^5H_S + 4H_K^4H_S^2
    \end{align*}
where $H_K,H_S$ generate $A^\ast(\PP \mathbb{S}^3 \times \PP (\mathbb{S}^{3\ast}))$. To see this, the polar degrees of a quadric hypersurface are all $\delta_{i} = 2$ because the Gauss map is just the restriction of a linear map to $Q$. The degree of the dual hypersurface of $C$ is $8$, this can be computed as if $C \subset \PP^3$ was cut out by two quadrics $Q_1,Q_2$. The degree of the dual variety is then the number of points on $C$ satisfying $\det \begin{bmatrix}
        \nabla Q_1 & \nabla Q_2 & \bu & v
    \end{bmatrix} = 0$ which is another quadric, which is expected to intersect $C$ in $8$ points by Bézouts theorem. The multidegrees $\mu_i(\det)$ of the closed graph in $\mathbb{S}^3$ are \cite[Section~5]{AGKMS}
    $$
\left[ \overline{\Gamma}_{\det(K)} \right] =  1H_S^5 +  2H_K^1H_S^4 + 4H_K^2H_S^3 + 4H_K^3H_S^2 +2H_K^4H_S + 1H_K^5 
    $$
     Theorem \ref{thm: MLDpolardegs} predicts the maximum likelihood degree to be
\begin{align*}
    \mathrm{MLD}_{\det}(Q) =& 2(1+2+4+4+2) = 26 \\
\mathrm{MLD}_{\det}(C) =& 1\cdot8 + 2\cdot 4 = 16.
\end{align*}
Knowing that $C$ is an elliptic curve one can also compute $\mathrm{MLD}_{\det}(C) = -2+2+4+12=16$ via the formula \cite[Example~4.4]{DRGS22}. We verify the value of $\mathrm{MLD}_{\det}$ using the following code in \texttt{Macaulay2} \cite{M2}.

{\small
\begin{lstlisting}[language=Macaulay2]
m=3;inds = flatten apply(m,i->apply(i+1,j->(j+1,i+1)));n = #inds -1;
R = QQ[join(apply(inds,I->k_I),apply(inds,I->s_I)), 
    Degrees=>join(apply(n+1,i->{1,0}),apply(n+1,i->{0,1}))]
K = matrix{apply(inds,I->k_I)}
Kmat = matrix apply(m,i->apply(m,j-> k_(min(i,j)+1,max(i,j)+1)))
S = matrix{join(apply(inds,I->s_I), apply(inds,I->0))}
F = ideal det(Kmat)
jac = (J) -> ( transpose jacobian J)
genQ = () -> sum flatten apply(inds,I->apply(inds,J->
    (-1)^(random(ZZ))*random(QQ)*k_I*k_J))
genL = () -> sum apply(inds,I->(-1)^(random(ZZ))*random(QQ)*k_I)
--Choose X
X = ideal(genQ(),genQ(),genL(),genL())
X = ideal(genQ())
c=codim X
--Compute critical points for general data
E = X+minors(c+2,S||jac(F)||jac(X))+ideal(sum apply(inds,I->k_I*s_I)-m);
Kring = QQ[apply(inds,I->k_I)]
genericcrits = saturate(sub(sub(E, 
	    apply(inds,I->s_I=>random(QQ))),Kring), sub(F,Kring));
MLD = degree genericcrits 
\end{lstlisting}
}

\end{example}

\bibliographystyle{alpha}
\bibliography{literature}

\end{document}